\documentclass[leqno,12pt]{amsart}
\usepackage[letterpaper,margin=1in]{geometry}
\usepackage{amsmath,amssymb,amsthm,mathtools}
\usepackage{microtype}
\usepackage[hidelinks]{hyperref}
\usepackage{xcolor}
\numberwithin{equation}{section}

\newtheorem{theorem}{Theorem}[section]
\newtheorem{corollary}{Corollary}[section]
\newtheorem{lemma}[theorem]{Lemma}
\newtheorem{remark}[theorem]{Remark}
\newcommand{\R}{\mathbb{R}}

\newcommand{\weakL}[1]{L^{#1,\infty}(\R)}

\makeatletter
\@namedef{subjclassname@2020}{\textup{2020} Mathematics Subject Classification}
\makeatother

\title{Optimal Decay Exponents for Fractional Burgers Rarefaction Waves}
\author[T. Phan]{
Tuoc Phan}
\address[T. Phan]{Department of Mathematics, University of Tennessee, Knoxville, TN, USA.}\email{tphan2@utk.edu}

\author[A. Stefanov]{Atanas Stefanov}
\address[A. Stefanov]{Department of Mathematics, University of Alabama at Birmingham, Birmingham, AL, USA.}\email{stefanov@uab.edu}%

\keywords{Fractional Burgers equation, Rarefaction waves, Lorentz spaces, Decay estimates, Nonlocal diffusion.}
\subjclass[2020]{35R11, 35B40, 35B45, 35L65.} 

\thanks{T. Phan is partially supported by Simons Foundation  grant 769369, A. Stefanov is partially supported through an NSF-DMS grant  2605398.}

\begin{document}
\maketitle
\vspace{-0.8cm}
\begin{abstract}
We establish decay estimates toward rarefaction waves for the one-dimensional fractional Burgers equation when the order of dissipation lies between one and two. The initial data are bounded, and their perturbation from the Riemann step belongs to the critical weak-Lorentz space determined by scaling; the perturbation need not be small or integrable. We obtain optimal algebraic decay exponents, improving the known decay rates, and extending convergence to every Lebesgue space above the critical exponent, including the supremum norm. We prove optimality by constructing a single solution with Riemann step initial data whose lower decay estimates have exactly the same algebraic exponents as our upper estimates. These lower bounds establish that neither the algebraic decay exponents nor the range of convergence can be improved under the stated assumptions. The upper estimates contain a logarithmic factor whose necessity remains open. The proof combines convex truncation estimates, fractional interpolation, and a one-sided slope estimate for the supremum norm.
\end{abstract}

\section{Introduction and main results}\label{sec:introduction}

\subsection{Problem setting and main result}\label{sec:setting}

We study convergence toward rarefaction waves for the one-dimensional fractional Burgers equation
\begin{equation}\label{eq:fractional-burgers}
\left\{
\begin{array}{cccl}
u_t+\Lambda^\alpha u+uu_x & = & 0, &
\quad (t,x)\in(0,\infty)\times\R,\\
u(0, x) & = & u_0(x), & \quad x \in \R,
\end{array} \right.
\end{equation}
where $\Lambda^\alpha=(-\partial_x^2)^{\alpha/2}$ is the Fourier multiplier with symbol $|\xi|^\alpha$ for $\alpha \in (1,2)$. Our aim is to determine the algebraic decay rate and the range of Lebesgue spaces in which the solution $u$ converges to the rarefaction wave for bounded initial data $u_0$, allowing nonintegrable initial perturbations.

\smallskip
To state our result, let us recall that for given constant states $u_-<u_+$, the associated rarefaction wave is
\begin{equation}\label{eq:rarefaction}
w^R(t,x)=
\begin{cases}
u_-,&x\le u_-t,\\
x/t,&u_-t<x<u_+t,\\
u_+,&x\ge u_+t,
\end{cases}
\qquad t>0.
\end{equation}
This is the entropy solution of the inviscid Burgers equation with Riemann step initial data, which we denote by $w^R(0, \cdot)$ as in \eqref{w-R0} below. 

\smallskip
We set $q_c=1/(\alpha-1)>1$ and we measure the perturbation of the initial data in the critical weak-Lorentz space $L^{q_c,\infty}(\R)$. Here, for each $q\in (1,\infty)$, the weak-Lorentz quasi-norm is defined by
\begin{equation} \label{Lorentz-norm-def}
\|f\|_{L^{q,\infty}(\R)}
=\sup_{\lambda>0}\lambda\,
\bigl|\{x\in\R:|f(x)|>\lambda\}\bigr|^{1/q}.
\end{equation}
As we point out in Remark \ref{rem:scaling} below,
 the equation \eqref{eq:fractional-burgers} is invariant under the scaling $u(t,x) \mapsto \lambda^{\alpha-1}u(\lambda^\alpha t,\lambda x)$ for $\lambda>0$, and the spaces $L^{q_c}(\R)$ and $L^{q_c,\infty}(\R)$ are invariant under the scaling. This is an important starting point for this work and the decay estimates are obtained in $L^p(\R)$ for $p \in (q_c, \infty]$.
 
\smallskip
In Theorem~\ref{thm:optimality}, we improve the decay rates in \cite{KMX2008} and extend convergence to $L^p(\R)$ for every $p\in(q_c,\infty]$.  Applications of this result will be given in our forthcoming work \cite{PS}, which also gives improved decay rates for the parabolic-elliptic system arising in the study of hydrodynamics of radiating gas, subject to (Gaussian and fractional) viscosity and to thermal conductivity. We give two direct applications with improved decay rates in Section~\ref{sec:applications}. 

\smallskip
Throughout the paper, the numbers $C$ and $C_p$ denote positive constants that may change from line to line and are independent of time. They may depend on $\alpha$, the end states $u_{\pm}$, and the norms of the initial data specified in the relevant statement. We also note that the existence of unique bounded smooth solutions $u$ to \eqref{eq:fractional-burgers} is known and it will be recalled in Theorem~\ref{thm:existence}.
\begin{theorem}\label{thm:optimality}
Let $\alpha \in (1,2)$ and $u_-<u_+$. Let $w^R$ be the rarefaction wave \eqref{eq:rarefaction}, whose initial datum is
\begin{equation} \label{w-R0}
w^R(0,x)=
\begin{cases}
u_-,&x<0,\\
u_+,&x\ge0.
\end{cases}
\end{equation}
For each $p \in (q_c, \infty]$, where $q_c=1/(\alpha-1)$, suppose that $u_0\in L^\infty(\R)$ satisfies
\[
u_0-w^R(0, \cdot)\in L^p(\R)\cap L^{q_c,\infty}(\R).
\]
Then the unique bounded solution $u$ of \eqref{eq:fractional-burgers} satisfies, with $1/\infty=0$,
\begin{equation} \label{main-thrm-decay-est}
\|u(t, \cdot)-w^R(t, \cdot)\|_{L^p(\R)}
\le C_p(1+t)^{-\frac{\alpha-1-1/p}{\alpha}}
\bigl[1+\log(2+t)\bigr],\qquad t\ge0,
\end{equation}
where
\[
C_p = C\Bigl(
1+\|u_0-w^R(0, \cdot)\|_{L^p(\R)}
+\|u_0-w^R(0, \cdot)\|_{L^{q_c,\infty}(\R)}\Bigr),
\]
with $C= C(\alpha,p,u_-,u_+)>0$ depending only on the indicated parameters. If the estimate \eqref{main-thrm-decay-est} is restricted to $t\ge1$, the term $\|u_0-w^R(0, \cdot)\|_{L^p(\R)}$ in $C_p$ can be omitted for every $p \in (q_c, \infty]$.
\end{theorem}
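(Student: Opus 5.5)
The proof is by a smooth-approximation comparison. I do not compare $u$ with $w^R$ directly. Instead I introduce a smooth approximate rarefaction wave $w$: the solution of the fractional Burgers equation \eqref{eq:fractional-burgers} itself, with smoothed step data $w_0(x)=\frac{u_-+u_+}{2}+\frac{u_+-u_-}{2}\tanh x$. Then I split
\[
u-w^R=(u-w)+(w-w^R).
\]
The two terms are handled separately.

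\emph{Step 1: the profile term $w-w^R$.} Here I establish the needed properties of $w$:
\begin{itemize}
\item $w_x\ge0$ and $\|w_x(t)\|_{L^\infty}\le C(1+t)^{-1}$. This follows from a one-sided Oleinik-type slope estimate; the maximum principle for $\Lambda^\alpha$ applied to $w_x$ gives $w_x\le 1/(t+c)$.
\item $\|w(t)-w^R(t)\|_{L^p}\le C(1+t)^{-(\alpha-1-1/p)/\alpha}\log(2+t)$.
\end{itemize}
For the second bound I write the Duhamel formula for $w-w^R$ using the fractional heat kernel $G_\alpha$, with $\|G_\alpha(t)\|_{L^{r}}\sim t^{-(1-1/r)/\alpha}$. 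The forcing $\Lambda^\alpha w^R$ satisfies $|\Lambda^\alpha w^R(t,x)|\lesssim t^{-1}\,\mathrm{dist}(x,\{u_\pm t\})^{1-\alpha}$ near the corners and decays like $|x|^{-\alpha}$ at infinity. It is therefore critical in $L^{q_c,\infty}$: $\|\Lambda^\alpha w^R(s)\|_{L^{q_c,\infty}}\lesssim s^{-1}$. Integrating the Duhamel term, the time integral of $s^{-1}$ against the kernel is what produces the logarithm. The quadratic remainder is absorbed by energy/$L^p$ estimates using the monotonicity of $w$.

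\emph{Step 2: the perturbation $v=u-w$.} It solves
\[
v_t+\Lambda^\alpha v+(wv)_x+vv_x=0,\qquad v(0)=u_0-w_0\in L^p\cap L^{q_c,\infty}.
\]
The difference $w_0-w^R(0)$ lies in every $L^r$, so the data are fine.
\begin{itemize}
\item \emph{Uniform bound.} For $r\ge q_c$, multiplying by $|v|^{r-2}v$ gives
\[
\frac{d}{dt}\|v\|_r^r+c\,\||v|^{r/2}\|_{\dot H^{\alpha/2}}^2\le -(r-1)\int w_x|v|^r\le0 .
\]
The convective term vanishes, and $w_x\ge0$ gives the good sign. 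Because $v_0$ is only weak-$L^{q_c}$ and not small, I use convex truncations $(|v|-k)_+$ level by level, run the same dissipative inequality for them, and obtain a uniform bound $\|v(t)\|_{L^{q_c,\infty}}\le C$ from the distribution function.
\item \emph{Decay.} I combine the fractional Nash/Gagliardo--Nirenberg inequality
\[
\|f\|_{L^r}^{r}\lesssim \||f|^{r/2}\|_{\dot H^{\alpha/2}}^{2\theta}\,\|f\|_{L^{q_c,\infty}}^{\dots}
\]
(an interpolation between $L^{q_c,\infty}$ and $\dot H^{\alpha/2}$) with the differential inequality. This yields the ODE $y'\le -c\,y^{1+\beta}$ with $\beta$ fixed by scaling, hence $\|v(t)\|_{L^r}\lesssim t^{-(\alpha-1-1/r)/\alpha}$ for $r<\infty$. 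The constant depends only on $\|v_0\|_{L^{q_c,\infty}}$; this is why the $L^p$ norm can be dropped for $t\ge1$. For $t\le1$ I use the trivial $L^p$ energy bound.
\end{itemize}

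\emph{Step 3: the case $p=\infty$ (main obstacle).} The $L^r$ constants blow up as $r\to\infty$, so the supremum norm must come from elsewhere. I use the one-sided slope estimate $u_x\le C/t$, which holds for $u$ itself by the same maximum-principle argument as in Step 1. Together with $w_x\ge0$ and $w_x\lesssim 1/t$, this gives
\[
v_x\le C/t .
\]
A one-sided Lipschitz function with $\|v\|_{L^r}\le\varepsilon$ satisfies $\sup v\lesssim \varepsilon^{r/(r+1)}t^{-1/(r+1)}$, because $v$ stays large on an interval of length comparable to $\sup v\cdot t$. Taking $r$ large but fixed, or balancing $r$ against $\log t$, recovers the exponent $(\alpha-1)/\alpha$ up to the logarithm. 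The lower bound $\inf v$ is controlled similarly via the reflection/comparison structure; failing that, I would use an $L^r\to L^\infty$ Duhamel smoothing on $[t/2,t]$ with the kernel bound, controlling the $(v^2)_x$ term through the one-sided estimate. Making this step exact without losing powers is where I expect the real difficulty.

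Finally, the triangle inequality combines the Step 2/Step 3 bounds on $v$ with the Step 1 bound on $w-w^R$ to give \eqref{main-thrm-decay-est}.
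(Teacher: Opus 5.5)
\textbf{Steps 2 and 3 are sound.} Since $v=u-w$ solves a forcing-free equation with $w_x\ge0$, truncation gives $\|v(t)\|_{L^{q_c,\infty}}\le C\|v_0\|_{L^{q_c,\infty}}$ with no logarithm, and the dissipative ODE gives the rate. Step 3 is also not the obstacle you expect. From $v_x\le 1/t$ and the $L^r$ bound for any fixed $r>q_c$ you get
$\sup|v|\lesssim t^{-\frac{r}{r+1}\cdot\frac{\alpha-1-1/r}{\alpha}-\frac1{r+1}}=t^{-\frac{\alpha-1}{\alpha}}$
exactly. Negative values are handled by integrating $v_x\le 1/t$ to the right, so no reflection argument is needed. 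The paper does the same thing with the weak norm in place of $L^r$.

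\textbf{The gap is Step 1.} Because $w_0-w^R(0,\cdot)\in L^1\cap L^\infty\subset L^p\cap L^{q_c,\infty}$, your claimed bound for $w-w^R$ is exactly \eqref{main-thrm-decay-est} for the single datum $u_0=w_0$. So your splitting reduces the theorem to itself for one datum. All the difficulty sits in that bullet, and the sketch given there does not work.

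\emph{The flux term is not a remainder.} In the equation for $\phi=w-w^R$ the flux difference is $\tfrac12\bigl((w+w^R)\phi\bigr)_x$. This is linear in $\phi$ with an $O(1)$, non-decaying coefficient, not a quadratic remainder. In Duhamel form it contributes $\int_0^t(t-s)^{-1/\alpha}\|\phi(s)\|_{L^p}\,ds$, which closes only by Gronwall and gives growth, not decay. It can be controlled only by an energy estimate that uses the sign of $w_x$ (or $w^R_x$). Then the forcing $-\Lambda^\alpha w^R$ must be treated inside that energy inequality, not by a separate linear Duhamel computation.

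\emph{The forcing is exactly critical.} By scaling, $\|\Lambda^\alpha w^R(s)\|_{L^{q_c,\infty}}$ is of order $s^{-1}$ for every $s>0$. It is therefore not integrable at $s=0$. With this forcing the truncation argument no longer gives a uniform weak bound, and the source-free ODE $y'\le -cy^{1+\beta}$ of Step 2 is unavailable. What is needed is the paper's chain:
\begin{itemize}
\item the truncation inequality with source, $Y_k(t)\le Y_k(t_0)+\int_{t_0}^t\int_{\{|\phi|>k\}}|F|\,dx\,ds$, combined with \eqref{eq:weak-set-estimate}, to show that $\|\phi(t)\|_{L^{q_c,\infty}}$ grows at most logarithmically;
\item the interpolation \eqref{eq:forcing-interpolation}, to absorb $\int F|\phi|^{p-2}\phi$ into the dissipation;
\item a time-weighted energy inequality.
\end{itemize}

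\emph{Small times.} With $w^R$ as the reference you would also have to treat $s\in(0,1]$ separately, for instance through $\|\Lambda^\alpha w^R(s)\|_{L^1}\lesssim s^{1-\alpha}$. The paper avoids this by comparing with the smooth monotone profile $V(t+1,\cdot)$, whose residual satisfies $\|F(t)\|_{L^{q_c,\infty}}\le C(1+t)^{-1}$ all the way down to $t=0$.

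Once that machinery is in place, it applies directly to $u-V(t+1,\cdot)$ and proves the theorem, so Steps 2 and 3 are not needed for it. They do, however, show that any logarithmic loss comes only from the single-datum profile error.
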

\noindent
We note that the exponent $\frac{\alpha-1-1/p}{\alpha}$ in the decay estimate \eqref{main-thrm-decay-est} and the range $(q_c, +\infty]$ of $p$ coincide those in the known results when $\alpha =2$ established in \cite{HN1991} and \cite[Propositions~3.2--3.3, p.~224]{KS}, see Theorem \ref{thm:viscous-profile} below. The following remark establishes optimality of the algebraic exponent and the range of $p$ in the estimate \eqref{main-thrm-decay-est}.
\begin{remark} \label{optimal-remark}
The algebraic exponent in the decay estimate \eqref{main-thrm-decay-est} is optimal. More precisely, for $u_-=-1$, $u_+=0$, and the exact step initial datum
\[
u_0(x)=w^R(0,x)=
\begin{cases}
-1,&x<0,\\
0,&x\ge0,
\end{cases}
\]
the unique bounded solution satisfies, for every $p \in [1,\infty]$,
\begin{equation}\label{eq:optimality-lower}
\|u(t, \cdot)-w^R(t, \cdot)\|_{L^p(\R)}
\ge c\,t^{-\frac{\alpha-1-1/p}{\alpha}},\qquad t\ge1,
\end{equation}
where $c>0$ and $\alpha \in (1, 2]$. Thus the algebraic exponent cannot be increased, and convergence fails in general for $p \in [1,q_c]$. The necessity of the logarithmic factor in the upper bound remains open.  The estimate \eqref{eq:optimality-lower} when $\alpha =2$ also proves the optimality of the known decay rates established in \cite{HN1991} and \cite[Propositions~3.2--3.3, p.~224]{KS} that are recorded in Theorem \ref{thm:viscous-profile} below.
\end{remark}
\smallskip
We note that the initial perturbation $u_0-w^R(0, \cdot)$ need not be small, monotone, or in $L^1(\R)$. For example, the hypotheses permit tails of order $(1+|x|)^{-(\alpha-1)}$, which belong to $L^{q_c,\infty}(\R)$ and to every $L^p(\R)$ with $p \in (q_c,\infty]$, but not to $L^{q_c}(\R)$ or $L^1(\R)$. The critical exponent $q_c$ and the decay rate are consistent with the scaling of the equation, as we explain in the following remark.
\begin{remark}\label{rem:scaling}
The fractional Burgers equation \eqref{eq:fractional-burgers} is invariant under
\[
u_\lambda(t,x)=\lambda^{\alpha-1}u(\lambda^\alpha t,\lambda x),
\qquad \lambda>0,
\]
since all three terms scale by $\lambda^{2\alpha-1}$. Note also that for any $f\in L^p(\R)$,
\[
\|\lambda^{\alpha-1}f(\lambda\,\cdot)\|_{L^p(\R)}
=\lambda^{\alpha-1-1/p}\|f\|_{L^p(\R)}, \quad p \in [1, \infty]
\]
and the same scaling law holds for the weak-Lorentz quasi-norm. Thus $L^{q_c}(\R)$ and $L^{q_c,\infty}(\R)$ are invariant under this spatial scaling. The corresponding decay exponent is $(\alpha-1-1/p)/\alpha$.
\end{remark}

\subsection{Relation to earlier work}

For the classical viscous Burgers equation, Hattori and Nishihara~\cite{HN1991} studied convergence toward rarefaction waves and the different behavior near and away from their corners. The viscous-profile bounds used in this paper are recorded in \cite[Lemma~2.4]{KMX2008} and \cite[Propositions~3.2--3.3, p.~224]{KS}. These estimates allow us to compare the fractional Burgers solution with a smooth monotone profile before returning to the inviscid rarefaction wave.

\smallskip
The closest earlier result for \eqref{eq:fractional-burgers} is due to Karch, Miao, and Xu~\cite[Theorem~1.1]{KMX2008}. For $\alpha\in(1,2)$, they considered bounded-variation initial data satisfying $u_0-w^R(0,\cdot)\in L^1(\R)$ and proved, for $p\in((3-\alpha)/(\alpha-1),\infty]$,
\begin{equation}\label{eq:earlier-rarefaction-decay}
\|u(t,\cdot)-w^R(t,\cdot)\|_{L^p(\R)}
\le C_p t^{-\frac12(\alpha-1-(3-\alpha)/p)}\log(2+t),\qquad t>0.
\end{equation}
\smallskip
An important related estimate concerns the difference of two solutions of the fractional equation when $\alpha \in (0, 1]$. If $u_0,\widetilde u_0\in L^\infty(\R)$ are function with bounded variation, $\widetilde u_0$ is nondecreasing, and $u_0-\widetilde u_0\in L^1(\R)$, then, for $p\in[1,\infty]$ and $t>0$,
\begin{equation}\label{eq:earlier-solution-comparison}
\|u(t,\cdot)-\widetilde u(t,\cdot)\|_{L^p(\R)}
\le C_{\alpha,p}t^{-\frac1\alpha(1-1/p)}
\|u_0-\widetilde u_0\|_{L^1(\R)}.
\end{equation}
See \cite[Theorem~1.1]{AIK2010}. In \eqref{eq:earlier-solution-comparison}, the reference function $\widetilde u$ solves the fractional equation \eqref{eq:fractional-burgers} when $\alpha \in (0,1]$ with corresponding initial data $\widetilde u_0$, whereas in \eqref{eq:optimality-lower} $w^R$ solves the inviscid equation. Thus one must also estimate $\widetilde u-w^R$ to obtain rarefaction-wave convergence. This distinction is relevant to our lower bound in \eqref{eq:optimality-lower}: even exact Riemann step data generate a nonzero error relative to $w^R$.

\smallskip
For data satisfying both sets of hypotheses, Theorem~\ref{thm:optimality} improves the algebraic exponent in \eqref{eq:earlier-rarefaction-decay}, since
\[
\frac{\alpha-1-1/p}{\alpha}
-\frac12\Big(\alpha-1-\frac{3-\alpha}{p}\Big)
=\frac{(2-\alpha)(\alpha-1)(1+1/p)}{2\alpha}>0.
\]
We also extend convergence to every $p\in(q_c,\infty]$ under a weak critical integrability condition on the initial perturbation, allowing nonintegrable tails. Remark~\ref{optimal-remark} establishes the optimality of this algebraic exponent and of the range of convergence. The necessity of the logarithmic factor remains open.

\smallskip
The order of dissipation determines the asymptotic profile. For bounded data with an integrable deviation from the Riemann step, Alibaud, Imbert, and Karch~\cite[Theorems~1.3, 1.5, and~1.6]{AIK2010} identified a linear fractional-diffusion profile when $\alpha\in(0,1)$ and a nonlinear self-similar profile when $\alpha=1$. Karch, Pude\l{}ko, and Xu~\cite{KPX2015} studied two-dimensional fractional convection-diffusion equations with step-like data. They obtained rarefaction, diffusion, or nonlinear self-similar asymptotics depending on the dissipation order and the direction of convection. These results place the present one-dimensional rarefaction problem within the broader classification of asymptotic profiles.

When the end states satisfy $u_->u_+$, the large time profile is a viscous shock, rather than a rarefaction wave. For scalar conservation laws with Riesz-Feller nonlocal diffusion, which include \eqref{eq:fractional-burgers}, Achleitner and Ueda, \cite{AU} have established the local asymptotic stability of the traveling waves, with explicit algebraic decay rates for the perturbations. Their approach is based on a Lyapunov functional in a Sobolev setting, and it requires the initial perturbation to be small. In contrast, Theorem \ref{thm:optimality} concerns the case $u_-<u_+$, and it allows for large and non-integrable perturbations in $L^{q_c,\infty}(\R)$.

\smallskip
Rarefaction-wave convergence has also been studied for nonlocal diffusion generated by convolution kernels. Kawashima and Tanaka~\cite[Main Theorem, p.~212]{KS} proved stability for a radiating-gas system under smallness and regularity assumptions on the wave and perturbation. Pude\l{}ko~\cite[Theorem~2.1]{Pudelko2014} considered $u_t=J*u-u-uu_x$, where $J$ is a nonnegative even probability density with finite second moment. Assuming $u_0-w^R(0,\cdot)\in L^1(\R)$ and $u_{0,x}\in L^1(\R)$ with $u_{0,x}\ge0$, she obtained, for $p\in[1,\infty]$ and $t>0$,
\begin{equation}\label{eq:kernel-rarefaction-decay}
\|u(t,\cdot)-w^R(t,\cdot)\|_{L^p(\R)}
\le C_p t^{-\frac12(1-1/p)}[\log(2+t)]^{\frac12(1+1/p)},
\end{equation}
without a smallness assumption. The comparison with a viscous rarefaction profile is common to that analysis and ours. The fractional Laplacian has a different low-frequency order, so \eqref{eq:kernel-rarefaction-decay} does not provide the rate for \eqref{eq:fractional-burgers}.

\smallskip
For integrable initial data, Biler, Karch, and Woyczy\'nski~\cite[Corollary~3.2 and Theorem~4.1]{BKW2001} established $L^p$ decay and linear diffusion asymptotics for a class of conservation laws driven by L\'evy diffusion, under appropriate semigroup and flux assumptions. That finite-mass setting differs from the connection between two distinct end states considered here. More recently, Jakubowski and Serafin~\cite{JS2023} constructed solutions with singular, nonintegrable initial data for multidimensional fractional Burgers equations with power-law fluxes. In particular, they treated homogeneous data $M|x|^{-\beta}$ with $\beta\in(1,d)$ in dimension $d\ge2$ and established self-similarity and two-sided estimates. Their setting illustrates the role of scaling for data outside the usual Lebesgue classes; their dimension and flux assumptions differ from our one-dimensional quadratic equation and bounded perturbations of a rarefaction wave.

\smallskip
The bounded-data existence theory underlying our analysis was developed by Droniou, Gallou\"et, and Vovelle~\cite[Theorem~1.1]{DGV2003}, who established global solutions and smoothing for fractional regularizations of scalar hyperbolic equations, and by Droniou and Imbert~\cite[Theorem~7]{DI2006}, who also developed integral representations, maximum principles, and entropy inequalities. For diffusion of order at most one, Alibaud~\cite{Alibaud2007} introduced an entropy formulation and proved well-posedness for bounded initial data.

\subsection{Approach and further estimates} \label{approach-sec} We compare $u$ with a smooth monotone profile obtained from the classical viscous Burgers equation. Let $V$ solve the Riemann problem
\[
\left\{
\begin{aligned}
V_t-V_{xx}+VV_x&=0,
&& (t,x)\in(0,\infty)\times\R,\\
V(0,x)&=w^R(0,x),
&& x\in\R,
\end{aligned}
\right.
\qquad
w^R(0,x)=
\begin{cases}
u_-,&x<0,\\
u_+,&x\ge0.
\end{cases}
\]
We set $w(t,x)=V(t+1,x)$ so that the comparison profile is smooth at $t=0$. Since the viscous equation is invariant under time translation, $w$ satisfies 
\begin{equation} \label{w-eqn-oct-11}
w_t-w_{xx}+ww_x=0 \quad (t,x)\in(0,\infty)\times\R. 
\end{equation}
Its monotonicity follows from Theorem~\ref{thm:viscous-profile}, and its derivative estimates are stated in \eqref{eq:shifted-viscous-bounds}. By Corollary~\ref{cor:shifted-viscous-profile},
\[
w(0, \cdot)-w^R(0, \cdot)\in L^1(\R)\cap L^\infty(\R).
\]
Thus the initial condition in Theorem~\ref{thm:optimality} is equivalent to $z(0, \cdot)\in L^p(\R)\cap L^{q_c,\infty}(\R)$ for the perturbation $z=u-w$.

\smallskip
Subtracting \eqref{w-eqn-oct-11} from \eqref{eq:fractional-burgers} and setting $z=u-w$, we obtain
\begin{equation} \label{F-definition}
z_t+\Lambda^\alpha z+\Big(wz+\frac12z^2\Big)_x=F \quad \text{in} \quad (0,\infty) \times \R, \quad \text{where}
\quad F=-\bigl(\Lambda^\alpha w+w_{xx}\bigr).
\end{equation}
The central estimate is
\[
\|F(t, \cdot)\|_{L^{q_c,\infty}(\R)}\le C(1+t)^{-1},
\]
which is proved in \eqref{eq:residual-critical} below using \eqref{eq:shifted-viscous-bounds} and the weak-type Riesz-potential estimate; see \cite[Chapter~V, \S1, pp.~117--121]{Stein1970}. Using the monotonicity of $w$ in convex truncation estimates, we then obtain logarithmic control of the weak critical norm of $z$. The logarithmic factor in the upper bound arises from integrating the residual estimate in time.

\smallskip
For $p \in (q_c,\infty)$, we combine this control with fractional interpolation in a time-weighted energy estimate. For $p=\infty$, we use the one-sided slope bound $u_x\le1/t$ of Oleinik type \cite{Oleinik1957}. Since $w_x\ge0$, we have $z_x\le1/t$. Integrating this inequality over an interval and applying the weak critical bound, we obtain the supremum estimate directly. We approximate the initial perturbation by spatial truncation and convolution using Lemma~\ref{lem:approximation}, and pass to the limit in the estimates.

\smallskip 
Note that the additional $L^1$ assumption is not needed in Theorem~\ref{thm:optimality}, and we do not obtain decay from these bounds for $p \in [1,q_c]$, which is also consistent with \eqref{eq:optimality-lower}. In fact, for integrable initial perturbations, the same residual and truncation estimates also yield bounds at and below the critical exponent as in the following remark.
\begin{remark}\label{rem:below-critical}
If $z(0, \cdot)\in L^1(\R) \cap L^{q_c,\infty}(\R)$, then
\[
\|z(t, \cdot)\|_{L^1(\R)}\le C(1+t)^{2-\alpha}.
\]
In addition,
\[
\|z(t, \cdot)\|_{L^{q_c,\infty}(\R)}\le C\bigl[1+\log(2+t)\bigr]
\]
and
\[
\|z(t, \cdot)\|_{L^p(\R)}
\le C_p(1+t)^{\frac1p-(\alpha-1)}
\bigl[1+\log(2+t)\bigr]^{\frac{1-1/p}{2-\alpha}},
\]
for $p \in (1,q_c)$.
\end{remark}

\smallskip
\subsection{Paper organization} In Section~\ref{sec:critical}, we recall the existence theory, viscous-profile bounds, and interpolation inequalities, and derive the residual and critical estimates. In Section~\ref{sec:decay}, we prove the decay estimates, the bounds in Remark~\ref{rem:below-critical}, and the optimality assertion in Remark~\ref{optimal-remark}. In Section~\ref{sec:applications}, we apply the main theorem to integrable perturbations and to planar rarefaction waves in two space dimensions.

\section{Preliminary analysis and estimates}\label{sec:critical}

\subsection{Existence and smoothing}

We recall the bounded-data existence theorem of \cite[Theorem 1.1]{DGV2003} and \cite[Theorem 7]{DI2006}, in the form recorded in \cite[Theorem 2.1]{KMX2008}.

\begin{theorem}\label{thm:existence}
Let $\alpha \in (1,2)$ and $u_0\in L^\infty(\R)$. The Cauchy problem \eqref{eq:fractional-burgers} has a unique global bounded solution $u$. For every $T \in (0,\infty)$ and $\tau \in (0,T)$,
\[
u\in C_b((0,T]\times\R)\cap C_b^\infty([\tau,T]\times\R),
\qquad
\|u(t, \cdot)\|_{L^\infty(\R)}\le\|u_0\|_{L^\infty(\R)}.
\]
The equation holds classically on $(0, \infty) \times \R$. As $t\downarrow0$, we have $u(t, \cdot)\rightharpoonup^*u_0$ in $L^\infty(\R)$ and $u(t, \cdot)\to u_0$ in $L^r_{\mathrm{loc}}(\R)$ for every $r \in [1,\infty)$.
\end{theorem}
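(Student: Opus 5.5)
Since Theorem~\ref{thm:existence} is recalled from \cite[Theorem 1.1]{DGV2003}, \cite[Theorem 7]{DI2006} and \cite[Theorem 2.1]{KMX2008}, the plan is to reconstruct its proof through the Duhamel formulation, keeping track only of the points we use later. Let $G_t$ be the kernel of $e^{-t\Lambda^\alpha}$. For $\alpha\in(1,2)$ it is a positive, smooth, even probability density with
\[
\|\partial_x G_t\|_{L^1(\R)}=c_\alpha t^{-1/\alpha}.
\]
Call $u$ a mild solution if
\[
u(t)=G_t*u_0-\tfrac12\int_0^t\partial_xG_{t-s}*\bigl(u(s)^2\bigr)\,ds .
\]
Since $1/\alpha<1$, the time singularity is integrable. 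The map defined by the right-hand side is therefore a contraction on a ball of $L^\infty((0,T)\times\R)$ for $T\sim\|u_0\|_{L^\infty}^{-\alpha/(\alpha-1)}$. This gives local existence, and also uniqueness among bounded mild solutions, via the Gronwall-type inequality
\[
\|u-v\|_{L^\infty}(t)\le C\int_0^t(t-s)^{-1/\alpha}\|u-v\|_{L^\infty}(s)\,ds .
\]

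Next comes regularity. Differentiating the Duhamel formula and using $\|\partial_x^kG_t\|_{L^1}\lesssim t^{-k/\alpha}$, we bootstrap to obtain, for $t\ge\tau>0$, bounds on $\|\partial_x^k u(t)\|_{L^\infty}$ depending on $\tau$, $k$ and $\|u_0\|_{L^\infty}$. The step from $k$ to $k+1$ derivatives places one derivative on the kernel, at the integrable cost $(t-s)^{-1/\alpha}$. Time regularity then follows from the equation itself. This yields $u\in C_b^\infty([\tau,T]\times\R)$ and shows that the equation holds classically for $t>0$.

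The step I expect to be the main obstacle is the a priori bound $\|u(t)\|_{L^\infty}\le\|u_0\|_{L^\infty}$, which is needed to iterate the local existence time and obtain a global solution. At a positive time $t$, $u(t)$ is smooth and bounded, but the supremum need not be attained. I would therefore use a penalized maximum argument: consider $u(t,x)-\varepsilon|x|^2$, or equivalently use the Lévy–Khintchine integral representation of $\Lambda^\alpha$ from \cite{DI2006}. At a near-maximum point we have $\Lambda^\alpha u\ge -o(1)$ and $u_x\approx0$, so $\sup_x u(t,\cdot)$ is nonincreasing; the infimum is handled symmetrically. An alternative is to compare with constant solutions through the $L^1$-contraction/comparison principle for entropy solutions in \cite{DGV2003}. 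Once this bound holds on $[\tau,T]$ for every $\tau>0$, the local step can be repeated with a uniform existence time, which gives a global solution.

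Finally, the initial trace. We have $G_t*u_0\rightharpoonup^* u_0$, and the Duhamel term is $O(t^{1-1/\alpha})$ in $L^\infty$, so $u(t)\rightharpoonup^*u_0$. For the $L^r_{\rm loc}$ convergence, note that $G_t*u_0\to u_0$ in $L^r_{\rm loc}$ for bounded $u_0$, by approximating $u_0$ on compact sets with continuous functions and using uniform boundedness. The Duhamel term again tends to zero uniformly. Continuity on $(0,T]\times\R$ follows from the smoothing estimates.
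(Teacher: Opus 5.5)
The paper gives no proof of this theorem. It imports the result from Droniou--Gallou\"et--Vovelle, Droniou--Imbert and Karch--Miao--Xu, and your reconstruction (Duhamel formula, contraction in $L^\infty$, smoothing via $\|\partial_x^kG_t\|_{L^1(\R)}\lesssim t^{-k/\alpha}$, a maximum principle to globalize, and the initial trace read off from the Duhamel form) is essentially the Droniou--Gallou\"et--Vovelle argument and is correct in outline.

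There are two places to write carefully. In the derivative bootstrap, the top-order term $u\,\partial_x^{k+1}u$ reappears inside the Duhamel integral, so you have to close it as a linear Volterra estimate. Use a time-weighted fixed point or difference quotients, started from a positive time at which $\partial_x^k u$ is already bounded. The maximum principle is cleanest as Lemma~\ref{lem:linear-comparison} on $[\tau,T]$ with $h=u-\sup_x u(\tau,\cdot)$, $b=u$, $c=0$; the quadratic penalty can serve only to locate near-maximizers, since $\Lambda^\alpha|x|^2$ diverges for $\alpha<2$. Then finish with $\sup_x u(\tau,\cdot)\le\operatorname*{ess\,sup}u_0+C\tau^{1-1/\alpha}$ and let $\tau\downarrow0$.
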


\smallskip
As discussed in Section \ref{approach-sec}, $w(t,x) = V(t+1, x)$  for $t \geq 0$ and $x \in \R$, where $V$ is defined in Theorem \ref{thm:viscous-profile} below. We use the following approximation by spatial truncation and convolution.
\begin{lemma}\label{lem:approximation}
Let $u_0\in L^\infty(\R)$, let $u$ be the solution in Theorem~\ref{thm:existence}, and set $z=u-w$. Let $\rho\in C_c^\infty(\R)$ be nonnegative with $\int_\R\rho=1$, and let $\chi\in C_c^\infty((-2,2))$ satisfy $0\le\chi\le1$ and $\chi=1$ on $[-1,1]$. We set
\begin{equation}\label{eq:approximation-data}
u_{0,n}=w(0,\cdot)+\rho_{1/n}*\bigl[\chi(\cdot/n)(u_0-w(0,\cdot))\bigr],
\qquad \rho_\varepsilon(x)=\varepsilon^{-1}\rho(x/\varepsilon).
\end{equation}
Then $u_{0,n}$ are smooth and uniformly bounded, and $u_{0,n}-w(0,\cdot)\in C_c^\infty(\R)$. The solutions $u_n$ and $u$ of \eqref{eq:fractional-burgers} with initial data $u_{0,n}$ and $u_0$, respectively, satisfy, for each $t>0$,
\begin{equation}\label{eq:approximation-local-convergence}
u_n(t,\cdot)\to u(t,\cdot)
\quad\text{in }L^1_{\mathrm{loc}}(\R).
\end{equation}
If also $u_0-w(0,\cdot)\in L^{q_c,\infty}(\R)$, then, for $p\in(q_c,\infty]$,
\begin{equation}\label{eq:approximation-initial-bounds}
\begin{aligned}
\|u_{0,n}-w(0,\cdot)\|_{L^p(\R)}
&\le\|u_0-w(0,\cdot)\|_{L^p(\R)},\\
\|u_{0,n}-w(0,\cdot)\|_{\weakL{q_c}}
&\le C_{q_c}\|u_0-w(0,\cdot)\|_{\weakL{q_c}}.
\end{aligned}
\end{equation}
Under this additional assumption, for every $r\in(q_c,\infty)$ and $T>0$,
\begin{equation}\label{eq:approximation-convergence}
\sup_{t \in [0,T]}\|u_n(t, \cdot)-u(t, \cdot)\|_{L^r(\R)}
\le C_T\|u_{0,n}-u_0\|_{L^r(\R)}\longrightarrow0.
\end{equation}
Moreover,
\begin{equation}\label{eq:approximation-lr-bounds}
\sup_{s\in[0,T]}\Bigl(\|z(s,\cdot)\|_{L^r(\R)}
+\sup_n\|u_n(s,\cdot)-w(s,\cdot)\|_{L^r(\R)}\Bigr)\le C_T.
\end{equation}
The constants $C_T>0$ and $C_{q_c}>0$ are independent of $n$.
\end{lemma}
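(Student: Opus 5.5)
We prove the lemma in four steps: the algebraic properties of $u_{0,n}$, the bounds \eqref{eq:approximation-initial-bounds}, the $L^r$ stability estimate \eqref{eq:approximation-convergence}, and finally the local convergence \eqref{eq:approximation-local-convergence} together with \eqref{eq:approximation-lr-bounds}.

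\emph{Properties of the data.} Set $z_0=u_0-w(0,\cdot)$. Corollary~\ref{cor:shifted-viscous-profile} gives $w(0,\cdot)-w^R(0,\cdot)\in L^1\cap L^\infty$, so $z_0\in L^\infty(\R)$. The function $\chi(\cdot/n)z_0$ is bounded with support in $(-2n,2n)$. Its mollification is therefore in $C_c^\infty(\R)$, with sup norm at most $\|z_0\|_{L^\infty}$. Since $w(0,\cdot)=V(1,\cdot)$ is smooth and bounded, $u_{0,n}$ is smooth and $\|u_{0,n}\|_{L^\infty}\le\|w(0)\|_{L^\infty}+\|z_0\|_{L^\infty}$, uniformly in $n$.

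\emph{Initial bounds \eqref{eq:approximation-initial-bounds}.} The $L^p$ bound follows from Young's inequality and $0\le\chi\le1$. For the weak norm, multiplication by $\chi(\cdot/n)$ does not increase distribution functions. Convolution with the probability density $\rho_{1/n}$ is bounded on $L^{q_c,\infty}$ because $q_c>1$: that space is normable with an equivalent norm (via $\sup_{|E|}|E|^{-1+1/q}\int_E|f|$), and Minkowski's inequality applies to that norm. This yields the constant $C_{q_c}$.

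\emph{Stability estimate \eqref{eq:approximation-convergence}.} The two solutions share the same smooth background, so $v=u_n-u$ satisfies
\[
v_t+\Lambda^\alpha v+\Bigl(\tfrac12(u_n+u)v\Bigr)_x=0 .
\]
We first check that $v(t)\in L^r$. Write $v=(u_n-w)-(u-w)$ and use the $z$-equation \eqref{F-definition}. For $r>q_c$ the forcing $F$ lies in $L^r$: its decay $|x|^{-\alpha-1}$-type away from the origin comes from \eqref{eq:shifted-viscous-bounds}. The difference $u_{0,n}-u_0=(\rho_{1/n}*(\chi_n z_0))-z_0$ lies in $L^r$, since $z_0\in L^{q_c,\infty}\cap L^\infty\subset L^r$. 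We then test the $v$-equation with $|v|^{r-2}v$, using a cutoff if needed to justify the computation. The Stroock--Varopoulos inequality makes the dissipation term nonnegative. The transport term is
\[
-\tfrac{1}{r}\int (u_n+u)_x|v|^r\cdot\tfrac{r-1}{1}\cdot\tfrac12 ,
\]
so we need a lower bound on $(u_n+u)_x$. We use the Oleinik bound $u_x\le 1/t$ only in a form that works one-sidedly. Alternatively, and more robustly, we use the $L^1$-contraction combined with Kato's inequality. The contraction gives $\|v(t)\|_{L^1}\le\|v(0)\|_{L^1}$ whenever the latter is finite, and the maximum principle gives $\|v(t)\|_{L^\infty}\le\|v(0)\|_{L^\infty}$. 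For general $L^r$ data, we plan to run the energy estimate on $[\tau,T]$, where Theorem~\ref{thm:existence} gives $\|u_x\|_{L^\infty}\le C_\tau$. Near $t=0$ we instead bound the term by $\|u_n+u\|_{L^\infty}\|v\|^{r-1}_{L^{r}}\|\partial_x\text{-free}\|$ via the Duhamel formula, using the smoothing $\|\partial_x e^{-t\Lambda^\alpha}\|_{L^r\to L^r}\le Ct^{-1/\alpha}$, which is integrable because $\alpha>1$. Then Gronwall's inequality in mild form gives $\sup_{[0,T]}\|v\|_{L^r}\le C_T\|v(0)\|_{L^r}$, with $C_T$ depending only on $\sup_n\|u_n\|_{L^\infty}$. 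The convergence $\|u_{0,n}-u_0\|_{L^r}\to0$ follows from dominated convergence and the continuity of mollification in $L^r$, because $z_0\in L^r$.

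\emph{Local convergence and uniform bounds.} For \eqref{eq:approximation-local-convergence} with general $u_0\in L^\infty$, the same Duhamel argument in the weighted space $L^1(e^{-|x|}dx)$ applies. The local $L^1$ stability for bounded data also follows from finite-speed-type $L^1_{\mathrm{loc}}$ contraction estimates in \cite{DI2006}. Since $u_{0,n}\to u_0$ in $L^1_{\mathrm{loc}}$, this gives the claim. Finally, \eqref{eq:approximation-lr-bounds} follows from the same mild Gronwall argument applied to the $z$-equation \eqref{F-definition}, using $\int_0^T\|F\|_{L^r}\,ds<\infty$ and the uniform initial bounds. The main obstacle is the Duhamel/Gronwall step near $t=0$, where no derivative bounds are available. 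The plan there is to rely solely on the $t^{-1/\alpha}$ smoothing, which is integrable since $\alpha>1$.
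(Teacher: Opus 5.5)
Your outline matches the paper on the properties of the data and on the weak-norm bound. Your equivalent norm $\sup_E|E|^{-1+1/q_c}\int_E|f|$ is exactly the paper's use of \eqref{eq:weak-set-estimate}, with $C_{q_c}=q_c/(q_c-1)$. It also matches on the inclusion $L^{q_c,\infty}\cap L^\infty\subset L^r$ and on the core of \eqref{eq:approximation-convergence}: the Duhamel formula for $v=u_n-u$ with $\|\partial_x e^{-t\Lambda^\alpha}\|_{L^r\to L^r}\le Ct^{-1/\alpha}$.

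\textbf{The gap.} A step that the paper handles explicitly is missing. A mild Gronwall inequality
\[
\phi(t)\le\|v(0)\|_{L^r(\R)}+C\int_0^t(t-\tau)^{-1/\alpha}\phi(\tau)\,d\tau,\qquad \phi(t)=\|v(t)\|_{L^r(\R)},
\]
yields nothing unless $\phi$ is already known to be finite and bounded on $[0,T]$. For bounded solutions with merely bounded data this is not known a priori. Your proposed check via the $z$-equation is circular, because deriving $z(t)\in L^r$ from \eqref{F-definition} needs the same a priori information.

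The paper closes this by solving the linear equation
\[
v(t)=e^{-(t-s)\Lambda^\alpha}v(s)-\tfrac12\int_s^t\partial_x e^{-(t-\tau)\Lambda^\alpha}\bigl[(u_n+u)v\bigr](\tau)\,d\tau
\]
by contraction in $L^\infty_t(L^r_x\cap L^\infty_x)$, on intervals of length $\delta$ with $C\int_0^\delta\tau^{-1/\alpha}\,d\tau\le\frac12$. It then identifies the fixed point with $u_n-u$ by uniqueness in $L^\infty_{t,x}$, since the same map is also a contraction there. Once you add this device, your route to \eqref{eq:approximation-lr-bounds} is correct. That route applies the same contraction and uniqueness to the $z$-equation, with bounded coefficient $(u+w)/2$ and forcing $F\in L^1(0,T;L^r\cap L^\infty)$. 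It is more self-contained than the paper's route, which applies Lemma~\ref{lem:perturbation-one} to the compactly supported perturbation $u_1-w$, interpolates between $L^1$ and $L^\infty$, and then uses \eqref{eq:approximation-convergence}.

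\textbf{Side routes to drop or repair.}
\begin{itemize}
\item The bound $\|u_n(t)-u(t)\|_{L^\infty}\le\|u_{0,n}-u_0\|_{L^\infty}$ is false for Burgers-type equations. A constant added to a solution is not a solution, and differences are amplified where $(u_n+u)_x<0$; compare $u(t,x)$ with $u(t,x+h)$ while a compressive front steepens.
\item In the $|v|^{r-2}v$ estimate the transport term is $-\frac{r-1}{2r}\int(u_n+u)_x|v|^r$. This needs a \emph{lower} bound on $(u_n+u)_x$, but the Oleinik bound $u_x\le 1/t$ is an upper bound. Also, Theorem~\ref{thm:existence} gives no $n$-uniform bound on $\|\partial_x u_n\|_{L^\infty([\tau,T]\times\R)}$. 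None of this is needed, since the Duhamel argument works on all of $[0,T]$.
\item For \eqref{eq:approximation-local-convergence}, the weight $e^{-|x|}$ fails. The kernel of $e^{-t\Lambda^\alpha}$ decays only like $t|x|^{-1-\alpha}$, so $e^{-t\Lambda^\alpha}$ is unbounded on $L^1(e^{-|x|}dx)$. Take instead $\omega(x)=(1+|x|)^{-\beta}$ with $1<\beta<\alpha$. Then $\omega\in L^1$, so bounded functions have finite weighted norm and no finiteness issue arises. Next, $\|u_{0,n}-u_0\|_{L^1(\omega)}\to0$ by local convergence and uniform boundedness. Finally, $\omega(x)\le(1+|y|)^\beta\omega(x-y)$ shows that $e^{-t\Lambda^\alpha}$ and $\partial_x e^{-t\Lambda^\alpha}$ act on $L^1(\omega)$ with norms at most $C_T$ and $C_Tt^{-1/\alpha}$ for $t\le T$. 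This gives a self-contained replacement for the paper's appeal to the bounded-data stability results of Droniou--Gallou\"et--Vovelle and Droniou--Imbert. Note that $\Lambda^\alpha$ has no finite speed of propagation, so ``finite-speed-type'' estimates are not what is available.
\item $\Lambda^\alpha w$ decays like $|x|^{-\alpha}$, not $|x|^{-\alpha-1}$, because $w$ has distinct limits at $\pm\infty$. Pointwise decay does not follow from \eqref{eq:shifted-viscous-bounds} in any case. The correct justification of $F\in L^1\cap L^\infty$ is Lemma~\ref{lem:fractional-interpolation} together with \eqref{eq:shifted-viscous-bounds}.
\end{itemize}
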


\begin{proof}
By Young's inequality and Theorem~\ref{thm:existence},
\[
\begin{aligned}
\sup_{t\ge0}\max\Bigl\{\|u(t,\cdot)\|_{L^\infty(\R)},
\sup_n\|u_n(t,\cdot)\|_{L^\infty(\R)}\Bigr\}
&\le\max\Bigl\{\|u_0\|_{L^\infty(\R)},\sup_n\|u_{0,n}\|_{L^\infty(\R)}\Bigr\}\\
&\le\|w(0,\cdot)\|_{L^\infty(\R)}+\|z(0,\cdot)\|_{L^\infty(\R)}.
\end{aligned}
\]
For each $R>0$ and all sufficiently large $n$, $\chi((x-y)/n)=1$ when $|x|\le R$ and $y\in\operatorname{supp}\rho_{1/n}$. Hence
\[
\|u_{0,n}-u_0\|_{L^1(-R,R)}
=\|\rho_{1/n}*z(0,\cdot)-z(0,\cdot)\|_{L^1(-R,R)}\longrightarrow0.
\]
This uniform bound and the bounded-data stability results in \cite{DGV2003,DI2006} imply \eqref{eq:approximation-local-convergence}.

\smallskip
We now assume $z(0,\cdot)\in\weakL{q_c}$. By Young's inequality and $0\le\chi\le1$, we obtain
\[
\|u_{0,n}-w(0,\cdot)\|_{L^p(\R)}
\le\|\chi(\cdot/n)z(0,\cdot)\|_{L^p(\R)}
\le\|z(0,\cdot)\|_{L^p(\R)}.
\]
For every measurable set $E$ with $0<|E|<\infty$, we use \eqref{eq:weak-set-estimate} to obtain
\[
\begin{aligned}
\int_E|u_{0,n}(x)-w(0,x)|\,dx
&\le\int_\R\rho_{1/n}(y)\int_{E-y}|z(0,x)|\,dx\,dy\\
&\le\frac{q_c}{q_c-1}\|z(0,\cdot)\|_{\weakL{q_c}}|E|^{1-1/q_c}.
\end{aligned}
\]
Taking $E=\{x\in[-R,R]:|u_{0,n}(x)-w(0,x)|>\lambda\}$ and letting $R\to\infty$, we obtain the second bound in \eqref{eq:approximation-initial-bounds}.

\smallskip
For $r\in(q_c,\infty)$, we compute
\[
\begin{aligned}
\|z(0,\cdot)\|_{L^r(\R)}^r
&=r\int_0^\infty\lambda^{r-1}|\{x\in\R:|z(0,x)|>\lambda\}|\,d\lambda\\
&\le\frac{r}{r-q_c}\|z(0,\cdot)\|_{\weakL{q_c}}^{q_c}
\|z(0,\cdot)\|_{L^\infty(\R)}^{r-q_c}<\infty.
\end{aligned}
\]
By dominated convergence and approximation by convolution,
\[
\|u_{0,n}-u_0\|_{L^r(\R)}
\le\|[\chi(\cdot/n)-1]z(0,\cdot)\|_{L^r(\R)}+\|\rho_{1/n}*z(0,\cdot)-z(0,\cdot)\|_{L^r(\R)}
\longrightarrow0.
\]
On the other hand, for $0\le s\le t\le T$, we have
\[
\begin{aligned}
u_n(t,\cdot)-u(t,\cdot)
&=e^{-(t-s)\Lambda^\alpha}(u_n(s,\cdot)-u(s,\cdot))\\
&\quad-\frac12\int_s^t\partial_x e^{-(t-\tau)\Lambda^\alpha}
\bigl[(u_n+u)(u_n-u)\bigr](\tau,\cdot)\,d\tau.
\end{aligned}
\]
By \cite[eq.~(2.4)]{DGV2003},
\[
\|\partial_x e^{-\tau\Lambda^\alpha}\|_{L^q(\R)\to L^q(\R)}
\le C_\alpha\tau^{-1/\alpha}, \quad \text{for} \quad q\in\{r,\infty\}.
\]
Here we may take
\[
C=\frac{C_\alpha}{2}\sup_n\sup_{\tau\in[0,T]}
\|u_n(\tau,\cdot)+u(\tau,\cdot)\|_{L^\infty(\R)}<\infty.
\]
Since $\alpha>1$, we choose $\delta>0$ sufficiently small that
\[
C\int_0^\delta\tau^{-1/\alpha}\,d\tau
=\frac{C\alpha}{\alpha-1}\delta^{1-1/\alpha}\le\frac12.
\]
Starting with $s=0$, on each interval $[s,\min\{s+\delta,T\}]$ we solve the linear integral equation by contraction in $L^\infty_t(L^r_x\cap L^\infty_x)$. Uniqueness in $L^\infty_{t,x}$ identifies this solution with $u_n-u$. Thus the difference belongs to $L^r(\R)$, and
\[
\begin{aligned}
\|u_n(t,\cdot)-u(t,\cdot)\|_{L^r(\R)}
&\le\|u_n(s,\cdot)-u(s,\cdot)\|_{L^r(\R)}\\
&\quad+C\int_s^t(t-\tau)^{-1/\alpha}
\|u_n(\tau,\cdot)-u(\tau,\cdot)\|_{L^r(\R)}\,d\tau.
\end{aligned}
\]
Taking the supremum over $t\in[s,\min\{s+\delta,T\}]$, we obtain
\[
\begin{aligned}
&\sup_{t\in[s,\min\{s+\delta,T\}]}\|u_n(t,\cdot)-u(t,\cdot)\|_{L^r(\R)}\\
&\quad\le\|u_n(s,\cdot)-u(s,\cdot)\|_{L^r(\R)}
+\frac12\sup_{t\in[s,\min\{s+\delta,T\}]}\|u_n(t,\cdot)-u(t,\cdot)\|_{L^r(\R)}.
\end{aligned}
\]
Subtracting the last term and multiplying by two, we obtain
\[
\sup_{t\in[s,\min\{s+\delta,T\}]}\|u_n(t,\cdot)-u(t,\cdot)\|_{L^r(\R)}
\le2\|u_n(s,\cdot)-u(s,\cdot)\|_{L^r(\R)}.
\]
Iterating over the intervals, we conclude
\[
\sup_{t\in[0,T]}\|u_n(t,\cdot)-u(t,\cdot)\|_{L^r(\R)}
\le2^{\lceil T/\delta\rceil}\|u_{0,n}-u_0\|_{L^r(\R)}\longrightarrow0,
\]
which proves \eqref{eq:approximation-convergence}.

\smallskip
Note also that by Theorems~\ref{thm:existence} and~\ref{thm:viscous-profile} below, we have
\begin{equation}\label{eq:bounded-profile-perturbation}
\|z(t,\cdot)\|_{L^\infty(\R)}
\le\|u_0\|_{L^\infty(\R)}+\max\{|u_-|,|u_+|\},\qquad t\ge0.
\end{equation}
Since $u_{0,1}-w(0,\cdot)$ is compactly supported, Lemma~\ref{lem:perturbation-one} and \eqref{eq:bounded-profile-perturbation} imply
\[
\begin{aligned}
\|z(s,\cdot)\|_{L^r(\R)}
&\le\|u(s,\cdot)-u_1(s,\cdot)\|_{L^r(\R)}\\
&\quad+\|u_1(s,\cdot)-w(s,\cdot)\|_{L^1(\R)}^{1/r}
\|u_1(s,\cdot)-w(s,\cdot)\|_{L^\infty(\R)}^{1-1/r}
\le C_T.
\end{aligned}
\]
Combining this bound with \eqref{eq:approximation-convergence}, we obtain \eqref{eq:approximation-lr-bounds}.
\end{proof}

\smallskip
Now, we note that $\Lambda^\alpha=(-\partial_x^2)^{\alpha/2}$ is understood in the Fourier multiplier with symbol $|\xi|^\alpha$ for $\alpha \in (1,2)$ as stated in the introduction. However, in a few steps in our proofs below, we use the following integral presentations
\begin{equation} \label{fractional-def}
\begin{split}
\Lambda^\alpha f(x) & = c_\alpha \lim_{\epsilon \rightarrow 0^+}\int_{|h|>\epsilon} \frac{f(x) - f(x+h)}{|h|^{1+\alpha}} dh, \ \quad \text{and} \\\Lambda^\alpha f(x)  & = c_\alpha \lim_{\epsilon \rightarrow 0^+}\int_{\epsilon}^\infty \frac{2f(x) - f(x+h) - f(x-h)}{|h|^{1+\alpha}} dh,
\end{split}
\end{equation}
for $f \in C^2(\R)$ satisfying $(1+|x|)^{-1-\alpha} f (x)\in L^1(\R)$, where $c_\alpha>0$ is the normalization constant. See \cite[Theorem 1.1, Definition 2.7, and Lemma 3.2]{MK} for example. Below, we record a linear comparison lemma and its consequence for the fractional Burgers equation. Similar results can be found in \cite[Theorem 2 and Proposition 3]{DI2006}. 
\begin{lemma}\label{lem:linear-comparison}
Let $\alpha\in(1,2]$, $T>0$, and let $b,c\in C((0,T]\times\R)$ be bounded. Suppose that $h\in C([0,T]\times\R)\cap C^{1,2}((0,T]\times\R)$ is bounded and satisfies
\begin{equation}\label{eq:linear-comparison-hypothesis}
h_t+\Lambda^\alpha h+b h_x+ch\le0
\quad\text{in }(0,T]\times\R,
\quad \text{and} \quad h(0,x)\le0 \ \forall \ x \in \R.
\end{equation}
Then $h(t,x)\le0$ on $[0,T]\times\R$. Consequently, if $h_1,h_2 \in  C([0,T]\times\R)\cap C^{1,2}((0,T]\times\R)$, bounded, and satisfy
\begin{equation}\label{eq:linear-sub-super-comparison}
\begin{aligned}
(h_1)_t+\Lambda^\alpha h_1+b(h_1)_x+ch_1
&\le(h_2)_t+\Lambda^\alpha h_2+b(h_2)_x+ch_2 \ \text{in }(0,T]\times\R, \\
h_1(0,x)&\le h_2(0,x), \ \forall \ x \in \R,
\end{aligned}
\end{equation}
then $h_1\le h_2$.
More generally, if $h_t+\Lambda^\alpha h+bh_x+ch\le f$, where $f\in C([0,T];L^\infty(\R))$ is continuous in space, then
\begin{equation}\label{eq:linear-forced-upper-bound}
h(t,x)\le e^{\kappa t}\|h(0,\cdot)^+\|_{L^\infty(\R)}
+\int_0^t e^{\kappa(t-s)}\|f(s,\cdot)^+\|_{L^\infty(\R)}\,ds,
\end{equation}
where $\kappa=\|c^-\|_{L^\infty((0,T]\times\R)}$ and $r^\pm=\max\{\pm r,0\}$.
\end{lemma}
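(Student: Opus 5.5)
The plan is a maximum-principle argument applied to a perturbed function, followed by a supremum argument at a first positive maximum. The perturbation compensates for $\R$ being unbounded and for the zero-order term $c$, which may have a negative part.

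\textbf{Reduction.} Set $\kappa=\|c^-\|_{L^\infty}$ and $\tilde h=e^{-(\kappa+1)t}h$. Then
\[
\tilde h_t+\Lambda^\alpha\tilde h+b\tilde h_x+(c+\kappa+1)\tilde h\le0,
\qquad c+\kappa+1\ge1 .
\]
So we may assume $c\ge1$ from the outset.

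\textbf{Perturbation for the unbounded domain.} Fix $\varepsilon>0$ and a smooth weight $\phi(x)=(1+x^2)^{\beta/2}$ with $0<\beta<\min\{1,\alpha\}$. This weight tends to infinity, and $\phi_x$ and $\Lambda^\alpha\phi$ are bounded. Boundedness of $\Lambda^\alpha\phi$ follows from the integral representation \eqref{fractional-def}: near $h=0$ one uses $\phi''\in L^\infty$, and for large $|h|$ the growth $|h|^\beta$ with $\beta<\alpha$ keeps the integral convergent. Put
\[
h_\varepsilon=h-\varepsilon(\phi+Mt)-\varepsilon .
\]
Since $c\ge1$, the extra terms contribute
\[
-\varepsilon\bigl(M+\Lambda^\alpha\phi+b\phi_x+c\phi+cMt+c\bigr)\le-\varepsilon\bigl(M-\|\Lambda^\alpha\phi\|_\infty-\|b\|_\infty\|\phi_x\|_\infty\bigr)-\varepsilon .
\]
This is at most $-\varepsilon<0$ once $M$ is large. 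Hence $h_\varepsilon$ satisfies the inequality strictly, $h_\varepsilon(0,\cdot)<0$, and $h_\varepsilon\to-\infty$ as $|x|\to\infty$ uniformly in $t\in[0,T]$.

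\textbf{Contradiction at a first positive time.} Suppose $h_\varepsilon>0$ somewhere. By continuity and the decay at infinity, there is a first time $t_0>0$ at which $\max_x h_\varepsilon(t_0,\cdot)=0$, attained at some $x_0$. At $(t_0,x_0)$ we have
\[
\partial_th_\varepsilon\ge0,\qquad \partial_xh_\varepsilon=0,\qquad \Lambda^\alpha h_\varepsilon(t_0,x_0)\ge0 .
\]
The last inequality comes from the first formula in \eqref{fractional-def}, since the integrand is nonnegative at a global maximum. Also $c\,h_\varepsilon=0$ there. This contradicts the strict inequality. Letting $\varepsilon\to0$ gives $h\le0$.

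\textbf{Technical points in applying \eqref{fractional-def}.} The representation requires $C^2$ regularity and a growth condition. The function $h$ is $C^2$ for $t>0$ and bounded, and $\phi$ grows like $|x|^\beta$ with $\beta<\alpha$, so the weighted integrability condition holds. One must check that $\Lambda^\alpha$ of $h$, understood via the Fourier multiplier, agrees with the integral form for bounded $C^2$ functions. I take this from the cited \cite{MK}.

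\textbf{Main obstacle.} The main obstacle is the time regularity near $t=0$, since $h$ is only continuous at $t=0$. The first-touching time $t_0$ is positive because $h_\varepsilon(0,\cdot)\le-\varepsilon$ and $h_\varepsilon$ is uniformly continuous on compact sets. Outside a compact set, $h_\varepsilon$ is negative uniformly in $t$. So near $t=0$ nothing positive can occur.

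\textbf{Consequences.} The comparison statement follows by linearity: take $h=h_1-h_2$. For \eqref{eq:linear-forced-upper-bound}, let $H(t)$ denote its right-hand side. Then $H'=\kappa H+\|f^+(t)\|_\infty$, which yields
\[
H_t+cH\ge H'-\kappa H=\|f^+(t)\|_\infty\ge f,
\]
using $c\ge-\kappa$ and $H\ge0$. Since $H$ depends only on $t$, $\Lambda^\alpha H=0$ and $H_x=0$, so $H$ is a supersolution. Also $h(0)\le H(0)$. Applying the first part to $h-H$, which is continuous in space because $f$ is, gives the bound.
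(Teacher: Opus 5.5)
Your proposal is correct and follows essentially the paper's argument: the exponential-in-time reduction to a nonnegative zero-order coefficient, subtraction of $\varepsilon(\phi+Mt)$ with a growing weight whose first derivative and $\Lambda^\alpha$ are bounded, a first-contact contradiction at a positive time, and the same spatially constant supersolution for the forced bound. The differences from the paper are cosmetic: you use the sublinear weight $(1+x^2)^{\beta/2}$ instead of $\sqrt{1+x^2}$, and you add a shift to $c\ge1$ with the extra constant $-\varepsilon$.
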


\begin{proof} We only need to consider the case $\alpha \in (1,2)$ as the case $\alpha =2$ follows the similar but simplier argument because $\Lambda^2 = -\partial_{xx}$. Replacing $h$ by $e^{-\kappa t}h$ and $c$ by $c+\kappa$, with $\kappa=\|c^-\|_{L^\infty((0,T]\times\R)}$, we may assume $c\ge0$. We set $\psi(x)=\sqrt{1+x^2}$. Since $\|\psi'\|_\infty\le1$ and $\|\psi''\|_\infty\le1$, we use \eqref{fractional-def} and apply the spherical corrodinates to obtain
\[
\begin{aligned}
|\Lambda^\alpha\psi(x)|
&\le c_\alpha\int_0^1 r^{1-\alpha}\,dr
+2c_\alpha\int_1^\infty r^{-\alpha}\,dr\\
&=c_\alpha\Bigl(\frac1{2-\alpha}+\frac2{\alpha-1}\Bigr), \quad \forall \ x \in \R.
\end{aligned}
\]
We choose $A>\|\Lambda^\alpha\psi\|_\infty+\|b\|_\infty$ and, for $\eta>0$, we define
\[
h_\eta(t,x)=h(t,x)-\eta\bigl(\psi(x)+At\bigr) \quad (t,x)\in[0,T]\times\R.
\]
Then 
\begin{equation}  \label{h-eta-initial}
h_\eta(0, \cdot) \leq -\eta \ , \ \text{and} \quad h_\eta(t,x)\to-\infty  \ \text{ as } \ |x|\to\infty
\ \text{ uniformly for } \ t\in[0,T].
\end{equation} 
Moreover,
\begin{align} \notag
(h_\eta)_t+\Lambda^\alpha h_\eta+b(h_\eta)_x+ch_\eta
&\le-\eta\bigl[A+\Lambda^\alpha\psi+b\psi'+c(\psi+At)\bigr]\\ \label{h-eta-eqn}
&\le-\eta\bigl[A-\|\Lambda^\alpha\psi\|_\infty-\|b\|_\infty\bigr]<0 \quad \text{in }(0,T]\times\R.
\end{align}
Now, if $h_\eta$ is non-negative somewhere, due to \eqref{h-eta-initial} and the continuity of $h_\eta$ on compact sets, we can find a first contact point $(t_*,x_*)$ with $t_*>0$ such that $h_\eta(t_*,x_*) =0$. Then, it follows that
\[
(h_\eta)_t(t_*,x_*)\ge0, \quad \text{and} \quad
(h_\eta)_x(t_*,x_*)=0,
\]
Moreover, as in \eqref{fractional-def}, we have
\[
\Lambda^\alpha h_\eta(t_*,x_*)
=c_\alpha\, \int_\R
\frac{-h_\eta(t_*,y)}{|x_*-y|^{1+\alpha}}\,dy\ge0
\]
This contradicts \eqref{h-eta-eqn}. Hence $h_\eta<0$. Letting $\eta\downarrow0$, we conclude $h\le0$.

\smallskip
Applying the result we just proved to $h_1-h_2$, we obtain \eqref{eq:linear-sub-super-comparison}. On the other hand, to prove \eqref{eq:linear-forced-upper-bound}, we set
\[
B(t)=e^{\kappa t}\|h(0,\cdot)^+\|_{L^\infty(\R)}
+\int_0^t e^{\kappa(t-s)}\|f(s,\cdot)^+\|_{L^\infty(\R)}\,ds.
\]
Then  $B\ge0$, $h(0,x)\le B(0)$, and
\[
B'(t)+c(t,x)B(t)
=(\kappa+c(t,x))B(t)+\|f(t,\cdot)^+\|_{L^\infty(\R)}
\ge f(t,x).
\]
Since $B$ is independent of $x$, we have $\Lambda^\alpha B=B_x=0$, and thus
\[
(h-B)_t+\Lambda^\alpha(h-B)+b(h-B)_x+c(h-B)\le0,
\quad \text{and} \quad (h-B)(0,x)\le0.
\]
Applying the comparison principle that we just proved, we obtain $h\le B$, which is \eqref{eq:linear-forced-upper-bound}. The proof of the lemma is complete.
\end{proof}

\smallskip
We apply Lemma \ref{lem:linear-comparison} to the difference of two solutions to derive the following corollary.
\begin{corollary}\label{cor:burgers-comparison}
Let $\alpha\in(1,2]$ and $T>0$. The following assertions hold.
\begin{itemize}
\item[\textup{(i)}] Suppose that $u,v\in C([0,T]\times\R)\cap C^{1,2}((0,T]\times\R)$ 
are bounded, $v_x\in L^\infty((0,T]\times\R)$, and
\[
\begin{aligned}
u_t+\Lambda^\alpha u+uu_x
&\le v_t+\Lambda^\alpha v+vv_x
&&\text{in }(0,T]\times\R,\\
u(0,x)&\le v(0,x)
&&\text{for }x\in\R.
\end{aligned}
\]
Then $u(t,x)\le v(t,x)$ on $[0,T]\times\R$.

\smallskip
\item[\textup{(ii)}] If $u_0,v_0\in L^\infty(\R)$ and $u,v$ are the corresponding solutions of \eqref{eq:fractional-burgers} with initial data $u_0,v_0$ satisfying $u_0\le v_0$ almost everywhere, then 
\[ u(t,x)\le v(t,x)\quad\text{for }t>0,\ x\in\R.
\]
In particular, for every $t>0$ and $x\in\R$,
\begin{equation}\label{eq:burgers-range-bound}
\operatorname*{ess\,inf}_{y\in\R}u_0(y)
\le u(t,x)\le
\operatorname*{ess\,sup}_{y\in\R}u_0(y).
\end{equation}
Moreover, if $u_0$ has a nondecreasing representative, then $u(t,\cdot)$ is nondecreasing for every $t>0$.
\end{itemize}
\end{corollary}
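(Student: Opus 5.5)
For part (i), we will reduce the nonlinear inequality to the linear comparison Lemma~\ref{lem:linear-comparison}. We set $h=u-v$ and write
\[
uu_x-vv_x=u h_x+v_x h .
\]
The hypothesis then becomes
\[
h_t+\Lambda^\alpha h+b\,h_x+c\,h\le0 \quad\text{with}\quad b=u,\ c=v_x .
\]
Both coefficients are continuous on $(0,T]\times\R$, since $u,v\in C^{1,2}$ there, and bounded by assumption, since $u$ is bounded and $v_x\in L^\infty$. We also have $h(0,\cdot)\le0$ and $h\in C([0,T]\times\R)\cap C^{1,2}((0,T]\times\R)$ bounded. Lemma~\ref{lem:linear-comparison} therefore gives $h\le0$, that is, $u\le v$.

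For part (ii), the difficulty is that the solutions from Theorem~\ref{thm:existence} are only $C_b((0,T]\times\R)$ and smooth on $[\tau,T]\times\R$. They are not continuous up to $t=0$, and we do not know a priori that $v_x$ is bounded near $t=0$. We plan to regularize the data. We take $u_{0,\varepsilon}=\rho_\varepsilon*u_0$ and $v_{0,\varepsilon}=\rho_\varepsilon*v_0$ with a nonnegative mollifier. These are smooth, bounded, have bounded derivatives of all orders, and still satisfy $u_{0,\varepsilon}\le v_{0,\varepsilon}$.

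For such data, the construction in \cite{DGV2003,DI2006} via the Duhamel formula gives solutions that are continuous on $[0,T]\times\R$ and $C^{1,2}$ on $(0,T]\times\R$. We also need a uniform bound on $v_{\varepsilon,x}$ on $(0,T]\times\R$. To get it, we differentiate the mild formulation and use $\|\partial_x e^{-\tau\Lambda^\alpha}\|\le C\tau^{-1/\alpha}$, which is integrable because $\alpha>1$. A short-time contraction iterated over $[0,T]$, as in the proof of Lemma~\ref{lem:approximation}, then gives $\sup_{[0,T]}\|v_{\varepsilon,x}\|_\infty\le C(\varepsilon,T)$. Part (i) then yields $u_\varepsilon\le v_\varepsilon$.

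Next we pass to the limit $\varepsilon\to0$. The mollified data converge to $u_0$ and $v_0$ in $L^1_{\mathrm{loc}}$, and they are uniformly bounded. The same bounded-data $L^1_{\mathrm{loc}}$ stability invoked in Lemma~\ref{lem:approximation} gives $u_\varepsilon(t)\to u(t)$ and $v_\varepsilon(t)\to v(t)$ in $L^1_{\mathrm{loc}}$ for each $t>0$. Along a subsequence this gives a.e. convergence, so $u(t,\cdot)\le v(t,\cdot)$ a.e. Continuity of $u$ and $v$ for $t>0$ upgrades this to every $x$.

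The remaining claims follow directly. For the range bound \eqref{eq:burgers-range-bound}, we compare $u$ with the constant solutions $\operatorname{ess\,inf}u_0$ and $\operatorname{ess\,sup}u_0$. For monotonicity, fix $a>0$. Since $u_0$ has a nondecreasing representative, $u_0(\cdot)\le u_0(\cdot+a)$ a.e. The translate $u(t,\cdot+a)$ is the solution with datum $u_0(\cdot+a)$ by translation invariance and uniqueness. Comparison then gives $u(t,x)\le u(t,x+a)$.

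The main obstacle I expect is the regularity bookkeeping for the mollified solutions: continuity at $t=0$ and global boundedness of $v_x$ on $(0,T]$. The first thing I would try is the Duhamel fixed-point argument in $W^{1,\infty}$. If that proves awkward, I would instead shift time, applying (i) on $[\tau,T]$ where everything is smooth, and use $L^1_{\mathrm{loc}}$ continuity at $\tau\downarrow0$. This alternative needs an $L^1$-contraction-type comparison of the initial traces, so the mollification route seems cleaner.
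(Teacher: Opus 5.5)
Your proposal is correct and follows the paper's proof: part (i) is the same reduction to Lemma~\ref{lem:linear-comparison} with $b=u$ and $c=v_x$. Part (ii) likewise regularizes the data, applies (i), passes to the limit by the bounded-data $L^1_{\mathrm{loc}}$ stability of \cite{DGV2003,DI2006}, and then obtains the range bound and monotonicity by comparing with constants and with translates. The differences are minor: you mollify $u_0,v_0$ directly instead of using the truncated approximations of Lemma~\ref{lem:approximation}, and you explicitly check continuity at $t=0$ and boundedness of $v_{\varepsilon,x}$ via the differentiated Duhamel formula, which the paper leaves implicit when it writes ``for these smooth data, we apply part (i).''
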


\begin{proof}
Using $uu_x-vv_x=u(u-v)_x+v_x(u-v)$, we obtain
\[
(u-v)_t+\Lambda^\alpha(u-v)+u(u-v)_x+v_x(u-v)\le0.
\]
We apply Lemma~\ref{lem:linear-comparison} with $h=u-v$, $b=u$, and $c=v_x$ to conclude $u\le v$.

\smallskip
For bounded initial data, we use Lemma~\ref{lem:approximation} with the same $w(0,\cdot)$, $\rho$, and $\chi$ for both data. Then
\[
u_{0,n}-v_{0,n}
=\rho_{1/n}*\bigl[\chi(\cdot/n)(u_0-v_0)\bigr]\le0.
\]
For these smooth data, we apply part~\textup{(i)} to obtain $u_n\le v_n$. By \eqref{eq:approximation-local-convergence}, we pass to the limit and obtain $u(t,\cdot)\le v(t,\cdot)$ almost everywhere, hence everywhere by continuity.

\smallskip
Comparing $u$ with the constant solutions $\operatorname*{ess\,inf}_\R u_0$ and $\operatorname*{ess\,sup}_\R u_0$, we obtain \eqref{eq:burgers-range-bound}. If $u_0$ is nondecreasing, then for every $a>0$,
\[
u_0(x)\le u_0(x+a)\quad\text{a.e. in }\R.
\]
Since $u(t,x+a)$ solves the same equation with initial datum $u_0(x+a)$, we obtain $u(t,x)\le u(t,x+a)$ by comparison. Thus $u(t,\cdot)$ is nondecreasing.
\end{proof}
\subsection{The viscous rarefaction profile}

We first recall the classical viscous-profile estimates derived in \cite{HN1991} and \cite[Propositions~3.2--3.3, p.~224]{KS},   using the profile approximation only for $p \in (1,\infty]$. See also \cite[Lemma~2.4]{KMX2008} for the summary of the results. The initial datum in this result is exactly the Riemann step, rather than an arbitrary function with the same end states.
\begin{theorem}\label{thm:viscous-profile}
Let $u_-<u_+$ and $w^R$ be the rarefaction wave \eqref{eq:rarefaction} with initial datum \eqref{w-R0}. The problem
\[
\left\{
\begin{array}{cccl}
V_t-V_{xx}+VV_x & = &0 & \quad \text{in}\quad (0, \infty) \times \R,\\
V(0, \cdot) & = & w^R(0,\cdot) & \quad \text{in} \quad \R,
\end{array} \right.
\]
has a unique global bounded solution $V$, smooth for positive time, satisfying $u_-<V(t,x)<u_+$ and $V_x(t,x)>0$ for $t>0$ and $x\in\R$. Moreover, for every $p \in [1,\infty]$ and $t>0$,
\begin{equation}\label{eq:viscous-derivatives}
\|V_x(t, \cdot)\|_{L^p(\R)}\le C_p t^{-1+1/p},\qquad
\|V_{xx}(t, \cdot)\|_{L^p(\R)}\le C_p t^{-3/2+1/(2p)},
\end{equation}
and, for $p \in (1,\infty]$,
\begin{equation}\label{eq:viscous-approximation}
\|V(t, \cdot)-w^R(t, \cdot)\|_{L^p(\R)}\le C_p t^{-\frac12(1-1/p)}
\end{equation}
where $C_p =C(p, u_+, u_-) >0$.
\end{theorem}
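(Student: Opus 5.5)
We would prove Theorem~\ref{thm:viscous-profile} from the explicit Cole--Hopf representation, with the one-sided slope bound obtained from the maximum principle.

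\textbf{Representation, well-posedness and monotonicity.} Set $\Phi_0(x)=\exp\bigl(-\tfrac12\int_0^x w^R(0,y)\,dy\bigr)$, so $\Phi_0(x)=e^{-u_-x/2}$ for $x<0$ and $\Phi_0(x)=e^{-u_+x/2}$ for $x\ge0$. Let $\Phi=G(t)*\Phi_0$ be the heat evolution and define $V=-2\Phi_x/\Phi$. Then $V$ is smooth for $t>0$, solves the viscous Burgers equation, and has initial trace $w^R(0,\cdot)$. Completing the square in the two half-line Gaussian integrals gives
\[
\Phi(t,x)=e^{-u_-x/2+u_-^2t/4}\,A(t,x)+e^{-u_+x/2+u_+^2t/4}\,B(t,x),
\]
where $A$ and $B$ are complementary error functions evaluated at $(x-u_-t)/\sqrt{4t}$ and $(u_+t-x)/\sqrt{4t}$, with appropriate signs. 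Hence $V=\theta u_-+(1-\theta)u_+$ up to explicit Gaussian correction terms, with a weight $\theta\in(0,1)$. This shows $u_-<V<u_+$.

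Uniqueness among bounded solutions is the $\alpha=2$ case of Corollary~\ref{cor:burgers-comparison}. That corollary also gives that $V(t,\cdot)$ is nondecreasing. Strict positivity of $V_x$ follows by applying the strong maximum principle to the linear equation $(V_x)_t-(V_x)_{xx}+V(V_x)_x+V_x^2=0$, since $V_x\ge0$ and $V_x\not\equiv0$.

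\textbf{Derivative bounds.} Because $V$ is monotone, $\|V_x(t)\|_{L^1}=u_+-u_-$. For $L^\infty$ we use an Oleinik-type bound: the function $g=V_x$ satisfies $g_t-g_{xx}+Vg_x+g^2=0$. Comparing with the spatially constant supersolution $1/t$, via Lemma~\ref{lem:linear-comparison} applied on $[\varepsilon,T]$ with $\varepsilon\downarrow0$, gives $0<V_x\le 1/t$. Interpolating the $L^1$ and $L^\infty$ bounds yields the $V_x$ part of \eqref{eq:viscous-derivatives}.

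For $V_{xx}$ we use Duhamel from time $t/2$:
\[
V_{xx}(t)=\partial_xG(t/2)*V_x(t/2)-\int_{t/2}^t\partial_xG(t-s)*\bigl(VV_x\bigr)_x(s)\,ds.
\]
Rewrite $(VV_x)_x=\partial_x(VV_x)$ and move that derivative onto the kernel, which leaves an integrand of size $(t-s)^{-1}$ and a logarithmic divergence. The cleaner route is to differentiate the explicit formula instead. It shows that $V_{xx}$ is supported, up to Gaussian tails, in the corner layers $|x-u_\pm t|\lesssim\sqrt t$, where $|V_{xx}|\lesssim t^{-3/2}$, plus the interior region, where $V_{xx}$ is exponentially small. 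Hence $\|V_{xx}\|_{L^p}\lesssim t^{-3/2}\,t^{1/(2p)}$.

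\textbf{Approximation \eqref{eq:viscous-approximation}.} Split $\R$ into three zones.
\begin{itemize}
\item[(i)] Far zones $x<u_-t-\sqrt t$ and $x>u_+t+\sqrt t$. Here the erfc asymptotics give $|V-u_\pm|\lesssim t^{-1/2}e^{-c\,\mathrm{dist}^2/t}$.
\item[(ii)] Interior $u_-t+\sqrt t<x<u_+t-\sqrt t$. Here $V-x/t$ is controlled by $t^{-1/2}$ times Gaussian factors in the distance to the corners.
\item[(iii)] Corner windows of width $\sqrt t$. Here $|V-w^R|\lesssim t^{-1/2}$, since both functions lie within $O(t^{-1/2})$ of $u_\pm$ there.
\end{itemize}
Integrating, $\|V-w^R\|_{L^p}^p\lesssim t^{-p/2}\,t^{1/2}$, which is the stated rate for $p\in(1,\infty]$.

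\textbf{Main obstacle.} We expect the hardest part to be the uniform erfc asymptotics near the corners $x=u_\pm t$. There the two exponentials in $\Phi$ compete, and one must check that the ratio $\theta$ transitions on scale $\sqrt t$ with Gaussian tails, uniformly in $t\ge1$. The case $t<1$ follows from boundedness and the $L^1$-type bounds. We would handle the corners with the standard bounds $\mathrm{erfc}(s)\sim e^{-s^2}/(s\sqrt\pi)$ for $s\to\infty$ and $\mathrm{erfc}\le 2$.
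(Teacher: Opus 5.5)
The paper does not prove this theorem; it quotes it from Hattori--Nishihara and Kawashima--Tanaka, where the Cole--Hopf route you chose is the standard one. Your $V_x$ argument is fine: monotonicity, $\|V_x\|_{L^1}=u_+-u_-$, the Oleinik bound $V_x\le 1/t$, and interpolation. The genuine gap is zone (ii) of the approximation estimate, because $V-x/t$ is \emph{not} $t^{-1/2}$ times a Gaussian in the distance to the corners. Set $s_L=(x-u_-t)/\sqrt{4t}$ and $s_R=(u_+t-x)/\sqrt{4t}$. The prefactors in your formula for $\Phi$ satisfy $e^{-u_-x/2+u_-^2t/4}e^{-s_L^2}=e^{-u_+x/2+u_+^2t/4}e^{-s_R^2}=e^{-x^2/(4t)}$. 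So every Gaussian factor cancels, and exactly
\[
\theta=\frac{m(s_L)}{m(s_L)+m(s_R)},\qquad m(s)=e^{s^2}\int_s^\infty e^{-r^2}\,dr .
\]
Your ``Gaussian correction terms'' therefore vanish identically (the $x$-derivatives of the two erfc factors cancel because $\Phi_0$ is continuous). It is this exact convex-combination identity that gives $u_-<V<u_+$. In the interior $s_L,s_R>\tfrac12$, and $m(s)=\frac1{2s}\bigl(1-\frac1{2s^2}+O(s^{-4})\bigr)$. The leading term reproduces $x/t$ exactly, so the remainder is only algebraically small. With $L=x-u_-t$ and $R=u_+t-x$ one finds
\[
V-\frac xt\approx\frac{2}{(u_+-u_-)\,t}\Bigl(\frac RL-\frac LR\Bigr),
\]
that is, $|V-w^R|\approx 2/\operatorname{dist}(x,\{u_-t,u_+t\})$ for $\sqrt t\ll\operatorname{dist}\ll t$. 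Your three zones, as stated, would integrate to $\|V(t,\cdot)-w^R(t,\cdot)\|_{L^1(\R)}\lesssim 1$ uniformly in $t$. That contradicts the computation after Corollary~\ref{cor:shifted-viscous-profile}, which gives $\int_0^\infty(w^R-V)\,dx=\log t+O(1)$ for $u_\pm=\pm1$; this $\log t$ is exactly $\int_{\sqrt t}^{t}2\,dL/L$. This is why nothing in your argument used $p>1$, although that restriction is essential.

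To repair zone (ii), use the Mills-ratio bounds $\frac{1}{s+\sqrt{s^2+2}}<m(s)\le\frac{1}{s+\sqrt{s^2+4/\pi}}$ for $s\ge0$. They give $m(s)=\frac{1}{2s}(1-\epsilon(s))$ with $0<\epsilon(s)<\frac{1}{2s^2}$, hence $|V-w^R|\le C(L^{-1}+R^{-1})$ in zone (ii). Then $\int_{\sqrt t}^\infty L^{-p}\,dL=t^{-(p-1)/2}/(p-1)$ yields \eqref{eq:viscous-approximation} precisely for $p>1$, with a constant that blows up as $p\downarrow1$.

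The same misconception appears in your $V_{xx}$ sketch. In the interior $V_{xx}$ is not exponentially small; it behaves like $4(L^{-3}-R^{-3})$. That tail is harmless, since $\int_{\sqrt t}^\infty L^{-3p}\,dL\sim t^{(1-3p)/2}$ for every $p\ge1$.

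``Boundedness'' also does not settle $t\le 1$ for $V_{xx}$, because $\|V_{xx}(t,\cdot)\|_{L^\infty}\sim t^{-1}$ as $t\downarrow0$. Work instead in $\xi=x/\sqrt{4t}$: there the explicit $\theta$ has $\xi$-derivatives bounded in every $L^p$ uniformly for $t\le1$, which gives $\|V_{xx}(t,\cdot)\|_{L^p}\lesssim t^{-1+1/(2p)}$.

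Finally, Corollary~\ref{cor:burgers-comparison} does not by itself give uniqueness or monotonicity for step data. Part (i) needs continuity at $t=0$ and bounded $v_x$. Part (ii) presupposes a well-posedness theory that, via Lemma~\ref{lem:approximation}, is built on this very profile. Use uniqueness of bounded mild solutions instead. For $V_x\ge0$, use $V_x=-2(\log\Phi)_{xx}$ together with preservation of the log-concavity of $\Phi_0$ under the heat flow.
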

\noindent
We next record the corresponding estimates for the time-shifted profile and verify its initial integrability relative to \eqref{w-R0}.
\begin{corollary}\label{cor:shifted-viscous-profile} Let $V$ be defined in Theorem \ref{thm:viscous-profile}, and set $w(t,x)=V(t+1,x)$ for $x \in \R$ and $t \geq 0$. Then, for $p \in [1,\infty]$ and $t\ge0$,
\begin{equation}\label{eq:shifted-viscous-bounds}
\begin{aligned}
& \|w_x(t, \cdot)\|_{L^p(\R)}\le C_p(1+t)^{-1+1/p},\\
& \|w_{xx}(t, \cdot)\|_{L^p(\R)} \le C_p(1+t)^{-3/2+1/(2p)},
\end{aligned}
\end{equation}
and, for $p \in (1,\infty]$,
\begin{equation}\label{eq:shifted-profile-approximation}
\|w(t, \cdot)-w^R(t, \cdot)\|_{L^p(\R)} \le C_p(1+t)^{-\frac12(1-1/p)}, \quad \forall  \ t \ge 0,
\end{equation}
where $C_p = C(p, u_+, u_-)>0$. Moreover, $w(0, \cdot)-w^R(0,\cdot)\in L^1(\R)\cap L^\infty(\R)$. 
\end{corollary}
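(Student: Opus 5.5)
The corollary follows from Theorem~\ref{thm:viscous-profile} by the time shift $t\mapsto t+1$, together with a direct check of the initial integrability of $V(1,\cdot)-w^R(0,\cdot)$. We proceed in three steps.

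First, the derivative bounds. Since $w_x(t,\cdot)=V_x(t+1,\cdot)$, estimate \eqref{eq:viscous-derivatives} at time $t+1$ gives
\[
\|w_x(t,\cdot)\|_{L^p(\R)}\le C_p(1+t)^{-1+1/p}
\]
for all $t\ge0$, and similarly for $w_{xx}$. This is exactly \eqref{eq:shifted-viscous-bounds}.

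Second, the profile approximation. For $p\in(1,\infty]$ we write
\[
w(t,\cdot)-w^R(t,\cdot)=\bigl[V(t+1,\cdot)-w^R(t+1,\cdot)\bigr]+\bigl[w^R(t+1,\cdot)-w^R(t,\cdot)\bigr].
\]
The first bracket is bounded by $C_p(2+t)^{-\frac12(1-1/p)}$ by \eqref{eq:viscous-approximation}. For the second bracket, both rarefaction waves take values in $[u_-,u_+]$, so it is bounded by $u_+-u_-$. Its support is contained in the set where at least one of the two waves is not constant, namely $\{\min(u_-t,u_-(t+1))<x<\max(u_+t,u_+(t+1))\}$.

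\begin{itemize}
\item For $t\ge1$ we use the Lipschitz-in-time estimate. On the fan $|\partial_t(x/t)|=|x|/t^2\le \max|u_\pm|/t$, and across the edges the waves are continuous and monotone in time, so the difference is at most $C/t$ pointwise. The support has length at most $C(1+t)$. Hence the $L^p$ norm is at most $C t^{-1}(1+t)^{1/p}\le C(1+t)^{-1+1/p}$. Since $-1+1/p\le -\frac12(1-1/p)$ exactly when $p\ge1$, this is dominated by the desired rate.
\item For $t\in[0,1]$ the right-hand side is bounded below by a positive constant. Both terms are then $O(1)$ in $L^p$: the first from \eqref{eq:viscous-approximation} at time $\ge1$, the second from the bounded support. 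This includes $t=0$, where $w^R(0,\cdot)$ is the step.
\end{itemize}

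Third, the integrability at $t=0$, where we must show $V(1,\cdot)-w^R(0,\cdot)\in L^1(\R)\cap L^\infty(\R)$. Boundedness is immediate from $u_-<V<u_+$. Note that \eqref{eq:viscous-approximation} is not available at $p=1$, so we argue directly. We have
\[
V(1,\cdot)-w^R(0,\cdot)=\bigl[V(1,\cdot)-w^R(1,\cdot)\bigr]+\bigl[w^R(1,\cdot)-w^R(0,\cdot)\bigr].
\]
The second term is bounded and compactly supported, so it lies in $L^1$. For the first term we use the monotonicity $V_x>0$ and the limits $V\to u_\pm$ as $x\to\pm\infty$. Since $\int_0^\infty(u_+-V(1,x))\,dx$ and $\int_{-\infty}^0(V(1,x)-u_-)\,dx$ are finite, the difference is integrable on each half-line.

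This finiteness is the main obstacle, since Theorem~\ref{thm:viscous-profile} as stated does not give it. We will obtain it from the Hopf--Cole representation of $V$: it gives an explicit formula for $V(1,\cdot)$ as a ratio of Gaussian integrals, which converges to $u_\pm$ at Gaussian rate, so both tail integrals are finite. As an alternative, we can compare $V$ by Corollary~\ref{cor:burgers-comparison} with $\alpha=2$ against explicit traveling sub- and supersolutions.
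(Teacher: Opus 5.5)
Your proposal is correct. Your first two steps are the paper's argument. The derivative bounds come from \eqref{eq:viscous-derivatives} at time $t+1$. The profile bound uses the same triangle inequality through $w^R(t+1,\cdot)$. For the second piece the paper integrates $\|\partial_s w^R(s,\cdot)\|_{L^p(\R)}\le C_p s^{-1+1/p}$ over $[t,t+1]$, which amounts to your pointwise $C/t$ bound times the support length.

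The genuine difference is the initial integrability. The paper uses no explicit formula. It writes the Duhamel representation
\[
V(1,\cdot)-w^R(0,\cdot)=(e^{\partial_x^2}-I)w^R(0,\cdot)-\int_0^1 e^{(1-s)\partial_x^2}\bigl(VV_x\bigr)(s,\cdot)\,ds .
\]
The first term is integrable because $w^R(0,\cdot)$ is a step. The second is bounded in $L^1$ by $\sup_{s}\|V(s,\cdot)\|_{L^\infty}\int_0^1\|V_x(s,\cdot)\|_{L^1}\,ds$. This is finite using only $u_-\le V\le u_+$, monotonicity, and the $L^1$-contractivity of the heat semigroup.

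Your Hopf--Cole route also works. Split the heat solution $\phi(1,\cdot)$ as $A+B$, the contributions from $\{y<0\}$ and $\{y>0\}$. The boundary terms at $y=0$ in $\phi_x$ cancel, and one gets
\[
V(1,\cdot)=\frac{u_-A+u_+B}{A+B}.
\]
Hence $u_+-V(1,x)=(u_+-u_-)A/(A+B)$ and $V(1,x)-u_-=(u_+-u_-)B/(A+B)$, and these decay like Gaussians at $+\infty$ and $-\infty$ respectively. This gives more than integrability. However, it depends on the quadratic flux and on an asymptotic computation you only asserted. The paper's argument is softer and insensitive to the flux.

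Two small points:
\begin{itemize}
\item The detour through $w^R(1,\cdot)$ is unnecessary. On each half-line, $V(1,\cdot)-w^R(0,\cdot)$ already equals $V(1,\cdot)-u_\pm$.
\item Your fallback via traveling sub- and supersolutions would need more than exact traveling waves. Bounded traveling waves of viscous Burgers are nonincreasing shock profiles, so they cannot lie below the step while tending to $u_+$ at $+\infty$.
\end{itemize}
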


\begin{proof}
Applying \eqref{eq:viscous-derivatives} at time $t+1$, we obtain \eqref{eq:shifted-viscous-bounds}. To compare $w(t, \cdot)$ with $w^R(t, \cdot)$ for $p \in (1,\infty]$, we use
\begin{equation} \label{w-t-est-oct-12}
\|w(t, \cdot)-w^R(t, \cdot)\|_{L^p}
\le\|V(t+1, \cdot)-w^R(t+1, \cdot)\|_{L^p}
+\|w^R(t+1, \cdot)-w^R(t, \cdot)\|_{L^p}.
\end{equation}
Using the explicit formula \eqref{eq:rarefaction}, we obtain
\[
\|\partial_t w^R(t, \cdot)\|_{L^p}\le C_p t^{-1+1/p},\qquad t>0.
\]
For $t\ge1$, we therefore have
\[
\begin{aligned}
\|w^R(t+1,\cdot)-w^R(t,\cdot)\|_{L^p}
&\le\int_t^{t+1}\|\partial_s w^R(s,\cdot)\|_{L^p}\,ds\\
&\le C_p t^{-1+1/p}\le C_p t^{-\frac12(1-1/p)}.
\end{aligned}
\]
For $t\in[0,1]$, this difference is uniformly bounded and supported in a fixed bounded interval. Combining these bounds with \eqref{w-t-est-oct-12} and \eqref{eq:viscous-approximation}, we obtain \eqref{eq:shifted-profile-approximation}.

\smallskip
To estimate $w(0,\cdot)-w^R(0,\cdot)$, we apply the heat-semigroup formula to obtain
\begin{equation} \label{V-1-est-oct-8}
V(1, \cdot)-w^R(0, \cdot)
=(e^{\partial_x^2}-I)w^R(0, \cdot)
-\int_0^1 e^{(1-s)\partial_x^2}\bigl(V(s, \cdot)V_x(s, \cdot)\bigr)\,ds.
\end{equation}
The first term on the right-hand side of \eqref{V-1-est-oct-8} is integrable because $w^R(0, \cdot)$ is a step function. On the other hand, the integral term in \eqref{V-1-est-oct-8} is bounded in $L^1$ by
\[
\sup_{s \in (0,1]}\|V(s, \cdot)\|_{L^\infty}
\int_0^1\|V_x(s, \cdot)\|_{L^1}\,ds
\le C,
\]
for some $C = C(u_0, u_+)>0$ Here we use $u_-\le V(s,x)\le u_+$ and $\|V_x(s,\cdot)\|_{L^1(\R)}\leq C$ for all $s \in [0,1]$ from Theorem~\ref{thm:viscous-profile}. Since $w^R(0,x)$ also lies in $[u_-,u_+]$, we have
\[
\|V(1,\cdot)-w^R(0,\cdot)\|_{L^\infty(\R)}\le u_+-u_-.
\] We have thus verified the assertion $w(0, \cdot)-w^R(0,\cdot)\in L^1(\R)\cap L^\infty(\R)$, and the proof is complete.
\end{proof}

\smallskip
We note that the restriction $p \in (1,\infty]$ in the profile approximation \eqref{eq:viscous-approximation} and similarly in \eqref{eq:shifted-profile-approximation} is essential. In \cite[Proposition~3.2 and the discussion following it, p.~224]{KS}, the authors prove the estimate for $p>1$ and give a logarithmic upper bound for $p=1$, while leaving the possibility of a uniform $L^1$ bound open. To see directly why a uniform bound fails, for the symmetric end states $u_-=-1$ and $u_+=1$, we obtain from the Cole--Hopf formula
\[
\begin{aligned}
\int_0^\infty\bigl(w^R(t,x)-V(t,x)\bigr)\,dx
&=-2\log\Bigl[\frac1{\sqrt{\pi t}}
\int_0^\infty e^{-y/2-y^2/(4t)}\,dy\Bigr]\\
&=\log t+O(1),\qquad t\to\infty.
\end{aligned}
\]
Thus $\|V(t, \cdot)-w^R(t, \cdot)\|_{L^1}$ cannot be bounded uniformly in time. Only the derivative estimates and the initial $L^1$ comparison are used at $p=1$ in this paper.

\subsection{Interpolation inequalities}

We use the following fractional Gagliardo--Nirenberg inequality; see \cite[Section~2.4]{Triebel1983}.
\begin{lemma}\label{lem:fractional-interpolation}
Let $\alpha \in (1,2)$ and $p \in [1,\infty]$. There is $C = C(\alpha, p)>0$ such that 
\begin{equation}\label{eq:fractional-interpolation}
\|\Lambda^\alpha f\|_{L^p(\R)}
\le C\|f_x\|_{L^p(\R)}^{2-\alpha}
\|f_{xx}\|_{L^p(\R)}^{\alpha-1},
\end{equation}
for every $f\in W^{2,p}_{\mathrm{loc}}(\R)$ such that $f_x,f_{xx}\in L^p(\R)$.
\end{lemma}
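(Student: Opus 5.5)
The statement to prove is Lemma~\ref{lem:fractional-interpolation}. I will write $\Lambda^\alpha f$ as a fractional derivative of order $\alpha-1\in(0,1)$ applied to $f_x$. Then I interpolate that fractional difference quotient between $f_x$ and $f_{xx}$ by splitting the integral at a scale $r$. Set $g=f_x\in L^p(\R)$, so that $g_x=f_{xx}\in L^p(\R)$. Since $\Lambda^\alpha=\Lambda^{\alpha-1}H\partial_x$, with $H$ the Hilbert transform, I avoid $H$ on $L^1$ and $L^\infty$, where it is unbounded. Instead I use the first-order representation in \eqref{fractional-def} and integrate by parts in $h$. For $f\in C^2$ with the stated decay, write $f(x)-f(x+h)=-\int_0^h g(x+s)\,ds$ and pair the kernel $|h|^{-1-\alpha}$ with its antiderivative $\operatorname{sgn}(h)|h|^{-\alpha}/\alpha$. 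After symmetrization this gives, up to a constant,
\[
\Lambda^\alpha f(x)=c'_\alpha\int_0^\infty \frac{g(x+h)-g(x-h)}{h^{\alpha}}\,dh .
\]
The boundary terms vanish: near $0$ because $|f(x)-f(x+h)|\le C|h|$ and $\alpha<2$; near $\infty$ by the decay/boundedness of $f$ and $\alpha>1$. This representation converges absolutely for $g\in L^\infty$ with $g_x\in L^\infty$, and it is the key identity.

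Next, fix $r>0$ and split the integral at $h=r$.
\begin{itemize}
\item On $(0,r)$, use $g(x+h)-g(x-h)=\int_{-h}^h g_x(x+s)\,ds$. Minkowski's inequality gives an $L^p$ norm at most $2\|g_x\|_{L^p}\int_0^r h^{1-\alpha}\,dh=\frac{2}{2-\alpha}r^{2-\alpha}\|f_{xx}\|_{L^p}$.
\item On $(r,\infty)$, bound $\|g(\cdot+h)-g(\cdot-h)\|_{L^p}\le 2\|g\|_{L^p}$. This gives at most $\frac{2}{\alpha-1}r^{1-\alpha}\|f_x\|_{L^p}$.
\end{itemize}
Both bounds hold for every $p\in[1,\infty]$, since translation is an isometry. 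Optimizing with $r=\|f_x\|_{L^p}/\|f_{xx}\|_{L^p}$ yields $C\|f_x\|_{L^p}^{2-\alpha}\|f_{xx}\|_{L^p}^{\alpha-1}$. The degenerate cases are handled as follows. If $f_{xx}=0$ then $g$ is constant, hence $g=0$ when $p<\infty$; when $p=\infty$, $f$ is affine and $\Lambda^\alpha f=0$ in the principal-value sense. If $f_x=0$, then $f$ is constant.

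Finally, I remove the smoothness assumption. For $f\in W^{2,p}_{\mathrm{loc}}$ with $f_x,f_{xx}\in L^p$, I interpret $\Lambda^\alpha f$ through the identity above, which makes sense as a Bochner integral in $L^p$. For $p<\infty$, mollify: $f_\varepsilon=\rho_\varepsilon*f$ satisfies the hypotheses, the bounds are uniform, and $\Lambda^\alpha f_\varepsilon\to\Lambda^\alpha f$ in $L^p$ or in distributions. Lower semicontinuity of the norm then transfers the inequality. For $p=\infty$ the same argument works with weak-* limits.

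I expect the main obstacle to be the justification of the representation, namely the integration by parts and the vanishing of boundary terms under only the growth condition $(1+|x|)^{-1-\alpha}f\in L^1$. I would treat it by doing the integration by parts on $\epsilon<|h|<R$ and letting $\epsilon\to0$, $R\to\infty$. The $R$-boundary term is $\frac{f(x\pm R)-f(x)}{R^\alpha}$. Since $f$ grows at most linearly (as $f_x\in L^p$), this is $O(R^{1-\alpha+1/p'})$, which does not tend to zero when $p$ is small and $\alpha$ is close to $1$. Here I would instead average in $R$, or use the $L^1$ weighted hypothesis to pick a sequence $R_k\to\infty$ along which the boundary term vanishes.
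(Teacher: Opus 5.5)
Your proof is correct, but it follows a different route from the paper. The paper gives no argument for this lemma and only cites the fractional Gagliardo--Nirenberg inequality from \cite[Section~2.4]{Triebel1983}. You prove it directly: you rewrite $\Lambda^\alpha f$ as $c'_\alpha\int_0^\infty h^{-\alpha}\bigl(f_x(x+h)-f_x(x-h)\bigr)\,dh$, apply Minkowski's integral inequality, split at $h=r$, and optimize in $r$. This is the same integration by parts the paper carries out later, for $w$, in the proof of Lemma~\ref{lem:residual-critical}. Your route has three advantages:
\begin{itemize}
\item it is self-contained;
\item it gives the explicit constant $|c'_\alpha|\bigl(\tfrac{2}{2-\alpha}+\tfrac{2}{\alpha-1}\bigr)$, which is independent of $p$;
\item it treats all $p\in[1,\infty]$ at once.
\end{itemize}
The last point matters because the paper uses the lemma exactly at the endpoints: $p=1$ in Lemma~\ref{lem:residual-one}, and $p=\infty$ for the bound $\|F(t,\cdot)\|_{L^\infty}\le C$. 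These are the cases where Fourier-multiplier or interpolation arguments are delicate, since for example the Hilbert transform is unbounded there. The citation, in exchange, is simply brevity.

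There are two slips in your justification of the representation. Neither is fatal.

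\textbf{Boundary term near $h=0$.} After you combine $h>0$ and $h<0$, the term at $|h|=\epsilon$ is $\bigl(2f(x)-f(x+\epsilon)-f(x-\epsilon)\bigr)/(\alpha\epsilon^{\alpha})$. The first-order bound $|f(x)-f(x+h)|\le C|h|$ that you invoke only gives $O(\epsilon^{1-\alpha})$, which diverges because $\alpha>1$. You need the second-order bound $|2f(x)-f(x+\epsilon)-f(x-\epsilon)|\le\epsilon^2\sup_{|s-x|\le\epsilon}|f''(s)|$. This gives $O(\epsilon^{2-\alpha})$, and that is where $\alpha<2$ actually enters.

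\textbf{The anticipated obstacle at $R\to\infty$ does not exist.} By H\"older, $|f(x\pm R)-f(x)|\le R^{1-1/p}\|f_x\|_{L^p}$. So the boundary term is $O(R^{1-1/p-\alpha})$, which tends to zero for every $p\in[1,\infty]$ because $1-1/p\le 1<\alpha$. Your exponent $1-\alpha+1/p'$ is a miscalculation, and you need neither averaging in $R$ nor a subsequence $R_k$.

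Two smaller remarks. The degenerate cases need no separate treatment: let $r\to\infty$ when $f_{xx}=0$, or $r\to0$ when $f_x=0$, in the split bound. And if you simply take the representation as the definition of $\Lambda^\alpha f$ for $f\in W^{2,p}_{\mathrm{loc}}$, the Minkowski estimate applies to it directly. Mollification is then needed only to identify this definition with the paper's $C^2$ integral formula.
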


In the next inequality, we interpolate between an integrable norm and a weak-Lorentz norm. 
\begin{lemma}\label{lem:below-interpolation}
Let $p \in (1,\infty)$ and  $q \in (1,p)$. There is $C_{q,p}>0$ such that  
\begin{equation}\label{eq:below-interpolation}
\|f\|_{L^q(\R)}
\le C_{q,p}\|f\|_{L^1(\R)}^{\frac{p-q}{q(p-1)}}
\|f\|_{L^{p,\infty}(\R)}^{\frac{p(q-1)}{q(p-1)}}
\end{equation}
for every $f\in L^1(\R)\cap L^{p,\infty}(\R)$.
\end{lemma}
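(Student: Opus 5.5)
We prove \eqref{eq:below-interpolation} by splitting $f$ at a height $\lambda>0$ using the layer-cake formula, and then optimizing over $\lambda$. By homogeneity of both sides under $f\mapsto cf$ there is nothing to normalize, so we work directly.

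Write $\mu(s)=|\{x\in\R:|f(x)|>s\}|$. Then
\[
\|f\|_{L^q(\R)}^q=q\int_0^\infty s^{q-1}\mu(s)\,ds .
\]
We split this integral at a level $\lambda>0$ and use two different bounds on $\mu$.

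For $s\le\lambda$, we use Chebyshev's inequality $\mu(s)\le s^{-1}\|f\|_{L^1(\R)}$. Since $q>1$, this gives
\[
q\int_0^\lambda s^{q-2}\|f\|_{L^1}\,ds=\frac{q}{q-1}\lambda^{q-1}\|f\|_{L^1}.
\]

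For $s>\lambda$, we use the definition \eqref{Lorentz-norm-def}, which gives $\mu(s)\le s^{-p}\|f\|_{L^{p,\infty}}^p$. Since $q<p$, this gives
\[
q\int_\lambda^\infty s^{q-1-p}\|f\|_{L^{p,\infty}}^p\,ds=\frac{q}{p-q}\lambda^{q-p}\|f\|_{L^{p,\infty}}^p.
\]
Both integrals converge exactly because $1<q<p$.

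We now balance the two terms by choosing $\lambda^{p-1}=\|f\|_{L^{p,\infty}}^p/\|f\|_{L^1}$. We may assume $\|f\|_{L^1}>0$, since otherwise $f=0$ a.e. and there is nothing to prove. With this choice, both terms are comparable to $\lambda^{q-1}\|f\|_{L^1}$, so
\[
\|f\|_{L^q}^q\le C\,\|f\|_{L^1}^{1-\frac{q-1}{p-1}}\|f\|_{L^{p,\infty}}^{\frac{p(q-1)}{p-1}} .
\]
The exponent of $\|f\|_{L^1}$ simplifies as $1-\frac{q-1}{p-1}=\frac{p-q}{p-1}$. Taking $q$-th roots gives exactly the exponents $\frac{p-q}{q(p-1)}$ and $\frac{p(q-1)}{q(p-1)}$ in \eqref{eq:below-interpolation}.

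There is no real obstacle in this argument. The only care needed is the exponent bookkeeping, and checking that the two partial integrals converge, which they do precisely under the hypothesis $1<q<p$.
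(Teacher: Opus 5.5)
Your proposal is correct and follows essentially the same route as the paper: the layer-cake formula split at a level $\lambda_0$, Chebyshev's bound $\mu(s)\le s^{-1}\|f\|_{L^1(\R)}$ below the level and the weak-Lorentz bound $\mu(s)\le s^{-p}\|f\|_{L^{p,\infty}(\R)}^p$ above it. You also make the same balancing choice $\lambda_0^{p-1}=\|f\|_{L^{p,\infty}(\R)}^p/\|f\|_{L^1(\R)}$, and your exponent bookkeeping checks out.
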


\begin{proof}We provide the proof for completeness.  Note that we only need to consider the case $f\ne0$. For every $\lambda>0$, by Chebyshev's inequality  and \eqref{Lorentz-norm-def} it follows that
\[
\lambda|\{|f|>\lambda\}|
\le\int_{\{|f|>\lambda\}}|f(x)|\,dx\le\|f\|_{L^1(\R)},
\quad \text{and} \quad
\lambda^p|\{|f|>\lambda\}|\le\|f\|_{L^{p,\infty}(\R)}^p.
\]
Thus, for every $\lambda_0>0$, we have
\[
\begin{aligned}
\|f\|_{L^q(\R)}^q
&=q\int_0^\infty\lambda^{q-1}|\{|f|>\lambda\}|\,d\lambda\\
&=q\int_0^{\lambda_0}\lambda^{q-1}|\{|f|>\lambda\}|\,d\lambda
+q\int_{\lambda_0}^\infty\lambda^{q-1}|\{|f|>\lambda\}|\,d\lambda\\
&\le q\|f\|_{L^1(\R)}\int_0^{\lambda_0}\lambda^{q-2}\,d\lambda
+q\|f\|_{L^{p,\infty}(\R)}^p\int_{\lambda_0}^\infty\lambda^{q-p-1}\,d\lambda\\
&=\frac{q}{q-1}\|f\|_{L^1(\R)}\lambda_0^{q-1}
+\frac{q}{p-q}\|f\|_{L^{p,\infty}(\R)}^p\lambda_0^{q-p}.
\end{aligned}
\]
We choose $\lambda_0=\Bigl(\frac{\|f\|_{L^{p,\infty}(\R)}^p}{\|f\|_{L^1(\R)}}\Bigr)^{1/(p-1)}$ and take the $q$-th root to obtain the stated estimate. 
\end{proof}

\smallskip
Above the critical exponent, we use the following interpolation estimates to control the perturbation and forcing terms by the fractional dissipation and the weak critical norm.

\begin{lemma}\label{interpolation}
Let $p \in (q_c,\infty)$ and assume
\[
f\in L^{q_c,\infty}(\R),
\quad \text{and} \quad \Lambda^{\alpha/2}|f|^{p/2}\in L^2(\R),
\]
where the second condition is understood in the homogeneous Sobolev sense. Then
\begin{equation}\label{eq:critical-sup-interpolation}
\|f\|_{L^\infty(\R)}^{p+1}\le C_p\Bigl\|\Lambda^{\alpha/2}|f|^{p/2}\Bigr\|_{L^2(\R)}^2\|f\|_{\weakL{q_c}}
\end{equation}
and
\begin{equation}\label{eq:critical-lp-interpolation}
\|f\|_{L^p(\R)}^p
\le C_p\Bigl\|\Lambda^{\alpha/2}|f|^{p/2}\Bigr\|_{L^2(\R)}^{\frac{2(p-q_c)}{p+1}}
\|f\|_{\weakL{q_c}}^{\frac{p(q_c+1)}{p+1}}.
\end{equation}
In particular, for every $\lambda,\varepsilon>0$,
\begin{equation}\label{eq:weighted-interpolation}
\lambda\|f\|_{L^p(\R)}^p
\le\varepsilon \Bigl\|\Lambda^{\alpha/2}|f|^{p/2}\Bigr\|_{L^2(\R)}^2+C_{p,\varepsilon}\lambda^{\frac{p+1}{q_c+1}}\|f\|_{\weakL{q_c}}^p.
\end{equation}
For $g\in\weakL{q_c}$, we also have
\begin{equation}\label{eq:forcing-interpolation}
\Bigl|\int_\R g|f|^{p-2}f\,dx\Bigr|
\le\varepsilon \Bigl\|\Lambda^{\alpha/2}|f|^{p/2}\Bigr\|_{L^2(\R)}^2+C_{p,\varepsilon}
\|g\|_{\weakL{q_c}}^{\frac{p+1}{q_c+1}}\|f\|_{\weakL{q_c}}^{\frac{pq_c-1}{q_c+1}}.
\end{equation}
Here $C_p$ and $C_{p,\varepsilon}$ may also depend on $\alpha$, which is fixed throughout.
\end{lemma}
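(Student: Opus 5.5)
The core is a Sobolev embedding plus a two-region splitting. Set $g=|f|^{p/2}$ and write $D=\|\Lambda^{\alpha/2}g\|_{L^2(\R)}$. Since $\alpha/2\in(1/2,1)$, the one-dimensional embedding $\dot H^{\alpha/2}\cap L^\infty\hookrightarrow \dot C^{0,(\alpha-1)/2}$ gives the H\"older bound
\[
|g(x)-g(y)|\le C D|x-y|^{(\alpha-1)/2}.
\]
To justify the use of the homogeneous norm, I would work with $f$ bounded and vanishing at infinity in measure, which $f\in\weakL{q_c}$ ensures, and approximate if necessary.

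\textbf{Step 1: the sup bound \eqref{eq:critical-sup-interpolation}.} Let $M=\|f\|_{L^\infty}$, so that $\|g\|_\infty=M^{p/2}$. Choose $x_0$ with $g(x_0)\ge \tfrac34 M^{p/2}$. Then $g\ge M^{p/2}/2$ on an interval of length $\ell$ determined by $CD\ell^{(\alpha-1)/2}=M^{p/2}/4$, that is, $\ell\simeq (M^{p/2}/D)^{2/(\alpha-1)}$. On that interval $|f|\ge 2^{-2/p}M$. The weak-Lorentz definition then gives $\ell\le C M^{-q_c}\|f\|_{\weakL{q_c}}^{q_c}$. Substituting $\ell$ and using $2/(\alpha-1)=2q_c$ yields
\[
M^{pq_c}D^{-2q_c}\le C M^{-q_c}\|f\|^{q_c}_{\weakL{q_c}}.
\]
Taking $q_c$-th roots gives $M^{p+1}\le CD^2\|f\|_{\weakL{q_c}}$, which is exactly \eqref{eq:critical-sup-interpolation}. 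The same argument works if the supremum is not attained. The main point needing care is making the H\"older embedding rigorous in the homogeneous setting. If that proves delicate, an alternative is the Gagliardo double-integral form of $D^2$ restricted to a level-set argument.

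\textbf{Step 2: the $L^p$ bound \eqref{eq:critical-lp-interpolation}.} Use the layer-cake formula split at the threshold $M$. For $\lambda\le M$, bound $|\{|f|>\lambda\}|\le \lambda^{-q_c}\|f\|_{\weakL{q_c}}^{q_c}$, which gives
\[
\|f\|_p^p\le p\int_0^M\lambda^{p-1-q_c}\,d\lambda\;\|f\|_{\weakL{q_c}}^{q_c}\le C M^{p-q_c}\|f\|_{\weakL{q_c}}^{q_c}.
\]
This step uses $p>q_c$. Inserting $M\le (CD^2\|f\|_{\weakL{q_c}})^{1/(p+1)}$ from Step 1, the exponent of $D$ is $2(p-q_c)/(p+1)$. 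The exponent of the weak norm is $q_c+(p-q_c)/(p+1)=p(q_c+1)/(p+1)$, matching the statement.

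\textbf{Step 3: the weighted bound \eqref{eq:weighted-interpolation}.} Apply Young's inequality to $\lambda\cdot D^{2\theta}N^{\beta}$, where $N=\|f\|_{\weakL{q_c}}$, $\theta=(p-q_c)/(p+1)<1$ and $\beta=p(q_c+1)/(p+1)$. The conjugate exponent is $1/(1-\theta)=(p+1)/(q_c+1)$. This produces
\[
\varepsilon D^2+C_\varepsilon\lambda^{(p+1)/(q_c+1)}N^{\beta(p+1)/(q_c+1)},
\]
and $\beta(p+1)/(q_c+1)=p$, as required.

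\textbf{Step 4: the forcing bound \eqref{eq:forcing-interpolation}.} Use the duality $L^{q_c,\infty}$ versus $L^{q_c',1}$ (the Lorentz H\"older inequality), which gives
\[
\Bigl|\int g|f|^{p-2}f\Bigr|\le C\|g\|_{\weakL{q_c}}\,\||f|^{p-1}\|_{L^{q_c',1}}.
\]
Bound the second factor by splitting the Lorentz integral with the same level-set argument, using $\sup|f|=M$ and the weak bound on $f$. Since $(p-1)q_c'>q_c$ exactly when $p>q_c$, the estimate $\||f|^{p-1}\|_{L^{q_c',1}}\le C M^{p-1-q_c/q_c'}N^{q_c/q_c'}$ should hold. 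Then insert Step 1 and apply Young's inequality once more, checking that the resulting exponents are $(p+1)/(q_c+1)$ on $\|g\|$ and $(pq_c-1)/(q_c+1)$ on $N$. The exponent bookkeeping here is routine, but it should be verified against homogeneity: the scaling $f\mapsto af$ and the dilation from Remark~\ref{rem:scaling}.
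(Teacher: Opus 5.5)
Your proposal is correct and follows essentially the same route as the paper: the fractional Morrey (H\"older) bound for $|f|^{p/2}$, an interval-versus-level-set argument against the weak norm for \eqref{eq:critical-sup-interpolation}, the layer-cake split for \eqref{eq:critical-lp-interpolation}, and Young's inequality for the last two bounds. Your $L^{q_c,\infty}$--$L^{q_c',1}$ H\"older step is just a repackaging of the paper's layer-cake argument combined with the set estimate $\int_E|g|\,dx\le C\|g\|_{\weakL{q_c}}|E|^{1-1/q_c}$; the one small refinement is that the paper runs your Step~1 interval argument at an arbitrary point $x$ with $f(x)\neq0$ rather than at a near-maximizer, obtaining $|f(x)|^{p+1}\le C\|\Lambda^{\alpha/2}|f|^{p/2}\|_{L^2(\R)}^2\|f\|_{\weakL{q_c}}$ pointwise, which proves boundedness of $f$ directly and makes your proposed approximation by bounded $f$ unnecessary.
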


\begin{proof}
If $\Lambda^{\alpha/2}|f|^{p/2}=0$, then $|f|^{p/2}$ is constant, and we infer $f=0$ from $f\in L^{q_c,\infty}(\R)$ and the assertions in the lemma are trivial. Otherwise, we use the continuous representative of $|f|^{p/2}$ and apply the homogeneous fractional Morrey inequality in $\dot H^{\alpha/2}(\R)$; see \cite[Corollary~2.7 and Theorem~4.4]{BrascoGomezCastroVazquez2021}. In terms of the equivalent Fourier seminorm, we obtain
\begin{equation} \label{Morrey-est}
\bigl||f(x)|^{p/2}-|f(y)|^{p/2}\bigr|
\le C \bigl\|\Lambda^{\alpha/2}|f|^{p/2}\bigr\|_{L^2(\R)}|x-y|^{(\alpha-1)/2}.
\end{equation}

\smallskip
We fix $x$ with $|f(x)|>0$. On an interval centered at $x$ of radius
\[
r=c_p\Bigl(\frac{|f(x)|^p}{\|\Lambda^{\alpha/2}|f|^{p/2}\|_{L^2(\R)}^2}\Bigr)^{1/(\alpha-1)},
\]
where $c_p>0$ is sufficiently small, we infer from \eqref{Morrey-est} that $|f(y)|>|f(x)|/2$ whenever $|y-x|<r$. Hence $(x-r,x+r)\subset\{y\in\R:|f(y)|>|f(x)|/2\}$ and then
\[
\bigl|\{y\in\R:|f(y)|>|f(x)|/2\}\bigr|\ge2r.
\]
Using the level $\lambda=|f(x)|/2$ in the definition of the weak-Lorentz quasi-norm \eqref{Lorentz-norm-def}, we obtain
\[
\begin{aligned}
\|f\|_{\weakL{q_c}}
&\ge\frac{|f(x)|}{2}\bigl|\{y\in\R:|f(y)|>|f(x)|/2\}\bigr|^{1/q_c}\\
&\ge\frac{|f(x)|}{2}(2r)^{1/q_c}.
\end{aligned}
\]
Since $1/q_c=\alpha-1$, it then follows from the choice of $r$ that
\[
\|f\|_{\weakL{q_c}}
\ge 2^{1/q_c-1}c_p^{1/q_c}
\frac{|f(x)|^{p+1}}{\bigl\|\Lambda^{\alpha/2}|f|^{p/2}\bigr\|_{L^2(\R)}^2}.
\]
Taking the supremum over $x$, we obtain \eqref{eq:critical-sup-interpolation}.

\smallskip
Next, by the definition of the weak-Lorentz quasi-norm \eqref{Lorentz-norm-def} again, we see that 
\begin{equation}\label{eq:weak-level-set}
|\{|f|>\lambda\}|\le\lambda^{-q_c}\|f\|_{\weakL{q_c}}^{q_c},
\end{equation}
for every $\lambda>0$. Using \eqref{eq:weak-level-set} and \eqref{eq:critical-sup-interpolation}, we see that
\[
\begin{aligned}
\|f\|_{L^p(\R)}^p
& =p\int_0^\infty\lambda^{p-1}|\{x\in\R:|f(x)|>\lambda\}|\,d\lambda=p\int_0^{\|f\|_{L^\infty(\R)}}\lambda^{p-1}|\{|f|>\lambda\}|\,d\lambda\\
&\le p\|f\|_{\weakL{q_c}}^{q_c}\int_0^{\|f\|_{L^\infty(\R)}}\lambda^{p-q_c-1}\,d\lambda=\frac{p}{p-q_c}\|f\|_{\weakL{q_c}}^{q_c}\|f\|_{L^\infty(\R)}^{p-q_c}\\
&\le C_p\bigl\|\Lambda^{\alpha/2}|f|^{p/2}\bigr\|_{L^2(\R)}^{\frac{2(p-q_c)}{p+1}}
\|f\|_{\weakL{q_c}}^{\frac{p(q_c+1)}{p+1}}.
\end{aligned}
\]
This proves \eqref{eq:critical-lp-interpolation}. Applying Young's inequality with conjugate exponents $\frac{p+1}{p-q_c}$ and $\frac{p+1}{q_c+1}$,
we obtain \eqref{eq:weighted-interpolation}.

\smallskip
For $q\in(1,\infty)$, $g\in L^{q,\infty}(\R)$, and $|E|<\infty$, we apply \eqref{eq:weak-level-set} with $f=g$ and $q_c$ replaced by $q$ to obtain
\begin{equation}\label{eq:weak-set-estimate}
\begin{aligned}
\int_E|g|\,dx
&=\int_0^\infty|E\cap\{x\in\R:|g(x)|>\lambda\}|\,d\lambda\\
&\le\int_0^\infty\min\Bigl\{|E|,\lambda^{-q}\|g\|_{L^{q,\infty}(\R)}^q\Bigr\}\,d\lambda
=\frac{q}{q-1}\|g\|_{L^{q,\infty}(\R)}|E|^{1-1/q}.
\end{aligned}
\end{equation}
Then, we apply \eqref{eq:weak-set-estimate} with $q=q_c$ to obtain
\[
\begin{aligned}
\Bigl|\int_\R g|f|^{p-2}f\,dx\Bigr|
&\le(p-1)\int_0^{\|f\|_{L^\infty(\R)}}\lambda^{p-2}
\int_{\{|f|>\lambda\}}|g|\,dx\,d\lambda\\
&\le C_p\|g\|_{\weakL{q_c}}\|f\|_{\weakL{q_c}}^{q_c-1}
\int_0^{\|f\|_{L^\infty(\R)}}\lambda^{p-q_c-1}\,d\lambda\\
&\le C_p\|g\|_{\weakL{q_c}}
\bigl\|\Lambda^{\alpha/2}|f|^{p/2}\bigr\|_{L^2(\R)}^{\frac{2(p-q_c)}{p+1}}
\|f\|_{\weakL{q_c}}^{q_c-1+\frac{p-q_c}{p+1}}.
\end{aligned}
\]
The integral converges because $p \in (q_c,\infty)$. Applying Young's inequality once more, we obtain \eqref{eq:forcing-interpolation}, since
\[
\frac{p+1}{q_c+1}
\Big(q_c-1+\frac{p-q_c}{p+1}\Big)
=\frac{pq_c-1}{q_c+1}.
\]
The proof is then complete.
\end{proof}

\subsection{Residual and critical estimates}

Throughout this subsection, $u$ is the bounded solution from Theorem~\ref{thm:existence}, $w(t, \cdot)=V(t+1, \cdot)$ is the profile from Corollary~\ref{cor:shifted-viscous-profile}, and $z=u-w$ satisfies \eqref{F-definition}. We begin by estimating the residual $F$ in that equation in the critical weak-Lorentz space.

\begin{lemma}\label{lem:residual-critical}
For all $t\ge0$,
\begin{equation}\label{eq:residual-critical}
\|F(t, \cdot)\|_{\weakL{q_c}}\le C(1+t)^{-1}.
\end{equation}
\end{lemma}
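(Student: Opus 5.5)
We need to bound $F=-(\Lambda^\alpha w+w_{xx})$ in $L^{q_c,\infty}$ by $C(1+t)^{-1}$, treating the two terms separately.

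\textbf{The term $w_{xx}$.} Use the embedding $L^{q_c}\subset L^{q_c,\infty}$ together with \eqref{eq:shifted-viscous-bounds} at $p=q_c$:
\[
\|w_{xx}(t)\|_{\weakL{q_c}}\le \|w_{xx}(t)\|_{L^{q_c}}\le C(1+t)^{-3/2+(\alpha-1)/2}=C(1+t)^{-1-(2-\alpha)/2}.
\]
This is at least as good as $(1+t)^{-1}$.

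\textbf{The term $\Lambda^\alpha w$: reduction to a Riesz potential.} Since $w$ is not integrable but $w_x$ is, write
\[
\Lambda^\alpha w=\Lambda^{\alpha-2}(-\partial_x^2)w=-I_{2-\alpha}\,w_{xx},
\]
where $I_{2-\alpha}$ is the Riesz potential of order $2-\alpha\in(0,1)$. Equivalently, $\Lambda^\alpha w=H\Lambda^{\alpha-1}w_x$ with $H$ the Hilbert transform. To justify this identity for the smooth bounded function $w$, I would check it against test functions via the integral representation \eqref{fractional-def}. The integration by parts works because $w_x,w_{xx}\in L^1\cap L^\infty$.

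\textbf{Choosing the right endpoint.} The weak-type Hardy--Littlewood--Sobolev inequality \cite{Stein1970} says that $I_{2-\alpha}$ maps $L^1$ to $L^{1/(\alpha-1),\infty}=L^{q_c,\infty}$. Indeed, $1-(2-\alpha)=\alpha-1=1/q_c$. Hence
\[
\|\Lambda^\alpha w(t)\|_{\weakL{q_c}}\le C\|w_{xx}(t)\|_{L^1}\le C(1+t)^{-1},
\]
using \eqref{eq:shifted-viscous-bounds} with $p=1$, which gives exactly the exponent $-3/2+1/2=-1$. This endpoint is the reason the critical space is weak-Lorentz: the strong $L^1\to L^{q_c}$ bound fails.

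\textbf{Main obstacle.} The identification $\Lambda^\alpha w=-I_{2-\alpha}w_{xx}$ for nonintegrable $w$ is the delicate point. I would verify it directly by applying the second formula in \eqref{fractional-def} and Taylor's formula:
\[
2w(x)-w(x+h)-w(x-h)=-\int_0^h (h-s)\bigl(w_{xx}(x+s)+w_{xx}(x-s)\bigr)\,ds.
\]
Then I would apply Fubini, which is allowed since $w_{xx}\in L^1\cap L^\infty$ and $\alpha\in(1,2)$ make the integrals converge. Computing $\int_{|y|}^\infty (h-|y|)h^{-1-\alpha}\,dh=c|y|^{1-\alpha}$ then yields the kernel $c|x-y|^{-(1-(2-\alpha))}$, which is precisely the Riesz kernel. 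Adding the two bounds gives \eqref{eq:residual-critical} with constants depending only on $\alpha,u_\pm$.
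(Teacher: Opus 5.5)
Your proposal is correct and follows essentially the same route as the paper. It uses the same splitting of $F$, the embedding $L^{q_c}\subset L^{q_c,\infty}$ for $w_{xx}$, the representation $\Lambda^\alpha w=-\frac{c_\alpha}{\alpha(\alpha-1)}\int_\R|x-y|^{1-\alpha}w_{xx}(y)\,dy$ with the weak-type $L^1\to L^{q_c,\infty}$ bound, and $\|w_{xx}(t,\cdot)\|_{L^1}\le C(1+t)^{-1}$; the only differences are cosmetic: you obtain the kernel via Taylor's formula with integral remainder and Tonelli (which avoids the boundary terms the paper controls when integrating by parts twice), and you cite the weak-type Hardy--Littlewood--Sobolev inequality where the paper proves it directly from $\sup_{y}\int_E|x-y|^{1-\alpha}\,dx\le C|E|^{2-\alpha}$.
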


\begin{proof}
For fixed $t\ge0$ and $x\in\R$, we use Taylor's formula and boundedness to obtain, for $h>0$,
\[
\begin{aligned}
\frac{|2w(t,x)-w(t,x+h)-w(t,x-h)|}{h^\alpha}
&\le\begin{cases}
h^{2-\alpha}\|w_{xx}(t,\cdot)\|_{L^\infty(\R)},&0<h\le1,\\
4h^{-\alpha}\|w(t,\cdot)\|_{L^\infty(\R)},&h\ge1,
\end{cases}\\
\frac{|w_x(t,x+h)-w_x(t,x-h)|}{h^{\alpha-1}}
&\le\begin{cases}
2h^{2-\alpha}\|w_{xx}(t,\cdot)\|_{L^\infty(\R)},&0<h\le1,\\
2h^{1-\alpha}\|w_x(t,\cdot)\|_{L^\infty(\R)},&h\ge1.
\end{cases}
\end{aligned}
\]
Since $\alpha\in(1,2)$, both boundary terms tend to zero as $h\downarrow0$ and as $h\to\infty$. Then, by using \eqref{fractional-def} and then integrating by parts twice, we obtain
\begin{align} \notag
\Lambda^\alpha w(t,x)
&=c_\alpha\int_0^\infty
\frac{2w(t,x)-w(t,x+h)-w(t,x-h)}{h^{1+\alpha}}\,dh\\ \notag
&=-\frac{c_\alpha}{\alpha}\int_0^\infty
\frac{w_x(t,x+h)-w_x(t,x-h)}{h^\alpha}\,dh\\ \notag
&=-\frac{c_\alpha}{\alpha(\alpha-1)}\int_0^\infty
h^{1-\alpha}\bigl[w_{xx}(t,x+h)+w_{xx}(t,x-h)\bigr]\,dh\\ \label{Riesz-potential}
&=-\frac{c_\alpha}{\alpha(\alpha-1)}
\int_\R |x-y|^{1-\alpha}w_{xx}(t,y)\,dy,
\end{align}
where $c_\alpha>0$ is the normalization constant. The last integral is the Riesz-potential representation of $-w_{xx}$; see \cite[Chapter~V, \S1, pp.~117--121]{Stein1970}. It is absolutely convergent, since
\[
\int_\R |x-y|^{1-\alpha}|w_{xx}(t,y)|\,dy
\le\frac{2}{2-\alpha}\|w_{xx}(t,\cdot)\|_{L^\infty(\R)}
+\|w_{xx}(t,\cdot)\|_{L^1(\R)}<\infty.
\]

\smallskip
For every measurable set $E$ with $0<|E|<\infty$ and every $y\in\R$, we have
\[
\begin{aligned}
\int_E|x-y|^{1-\alpha}\,dx
&\le\int_{|x-y|\le |E|}|x-y|^{1-\alpha}\,dx
+\int_{E\cap\{|x-y|>|E|\}}|E|^{1-\alpha}\,dx\\
&\le 2\int_0^{|E|}r^{1-\alpha}\,dr+|E|^{1-\alpha}|E|\\
&=\Bigl(\frac{2}{2-\alpha}+1\Bigr)|E|^{2-\alpha}.
\end{aligned}
\]
Taking the supremum over $y\in\R$, we obtain
\[
\sup_{y\in\R}\int_E|x-y|^{1-\alpha}\,dx
\le C_\alpha |E|^{2-\alpha}.
\]
From this, \eqref{Riesz-potential}, and by applying Tonelli's theorem, we find
\[
\int_E|\Lambda^\alpha w(t,x)|\,dx
\le C_\alpha |E|^{2-\alpha}\|w_{xx}(t, \cdot)\|_{L^1(\R)}.
\]
For $\lambda,R>0$, we choose $E=\{x\in[-R,R]:|\Lambda^\alpha w(t,x)|>\lambda\}$. By the definition of $E$ and the preceding estimate, we obtain
\[
\lambda |E|\le\int_E|\Lambda^\alpha w(t,x)|\,dx
\le C_\alpha |E|^{2-\alpha}\|w_{xx}(t,\cdot)\|_{L^1(\R)}.
\]
If $|E|>0$, we divide by $|E|^{2-\alpha}$ to obtain
\[
\lambda |E|^{\alpha-1}\le C_\alpha\|w_{xx}(t,\cdot)\|_{L^1(\R)}.
\]
The same inequality is immediate when $|E|=0$.
Letting $R\to\infty$ and taking the supremum over $\lambda>0$, we use $1/q_c=\alpha-1$ and \eqref{eq:shifted-viscous-bounds} to conclude
\[
\|\Lambda^\alpha w(t, \cdot)\|_{\weakL{q_c}}
\le C\|w_{xx}(t, \cdot)\|_{L^1(\R)}\le C(1+t)^{-1}.
\]
Also, using the embedding $L^{q_c}(\R)\hookrightarrow L^{q_c,\infty}(\R)$ and \eqref{eq:shifted-viscous-bounds}, we obtain
\[
\|w_{xx}(t, \cdot)\|_{\weakL{q_c}}
\le C\|w_{xx}(t, \cdot)\|_{L^{q_c}(\R)}
\le C(1+t)^{-(4-\alpha)/2}\le C(1+t)^{-1}.
\]
From the definition of $F$ in \eqref{F-definition} and the last two estimates, we obtain the result. The proof is complete.
\end{proof}

We also need an $L^1(\R)$ bound on the residual to control the perturbation below the critical exponent.

\begin{lemma}\label{lem:residual-one}
For all $t\ge0$,
\begin{equation}\label{eq:residual-one}
\|F(t, \cdot)\|_{L^1(\R)}\le C(1+t)^{1-\alpha}.
\end{equation}
\end{lemma}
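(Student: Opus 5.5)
We need $\|F(t,\cdot)\|_{L^1}\le C(1+t)^{1-\alpha}$, where $F=-(\Lambda^\alpha w+w_{xx})$. The term $w_{xx}$ is harmless. By \eqref{eq:shifted-viscous-bounds} with $p=1$, $\|w_{xx}(t,\cdot)\|_{L^1}\le C(1+t)^{-1}\le C(1+t)^{1-\alpha}$, because $\alpha<2$. So the whole task is to bound $\|\Lambda^\alpha w(t,\cdot)\|_{L^1}$.

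The plan is to apply the fractional Gagliardo--Nirenberg inequality, Lemma~\ref{lem:fractional-interpolation}, with $p=1$ and $f=w(t,\cdot)$. The hypotheses hold: $w$ is smooth and bounded, and $w_x,w_{xx}\in L^1(\R)$ by \eqref{eq:shifted-viscous-bounds}. This gives
\[
\|\Lambda^\alpha w(t,\cdot)\|_{L^1(\R)}
\le C\|w_x(t,\cdot)\|_{L^1(\R)}^{2-\alpha}\|w_{xx}(t,\cdot)\|_{L^1(\R)}^{\alpha-1}.
\]
By \eqref{eq:shifted-viscous-bounds}, $\|w_x(t,\cdot)\|_{L^1}\le C$; in fact it equals $u_+-u_-$ by monotonicity. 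Also $\|w_{xx}(t,\cdot)\|_{L^1}\le C(1+t)^{-1}$. Hence $\|\Lambda^\alpha w(t,\cdot)\|_{L^1}\le C(1+t)^{-(\alpha-1)}$, and adding the $w_{xx}$ bound gives \eqref{eq:residual-one}.

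The only point needing care is whether Lemma~\ref{lem:fractional-interpolation} is really valid at the endpoint $p=1$, where multiplier theory fails. If I do not want to rely on it, I would give a direct argument from the second-difference representation \eqref{fractional-def}. Split the $h$-integral at a scale $\delta>0$:
\begin{itemize}
\item For $h\le\delta$, write $2w(x)-w(x+h)-w(x-h)=-\int_0^h\!\int_{-s}^{s}w_{xx}(x+\sigma)\,d\sigma\,ds$. Taking $L^1$ norms in $x$ bounds this part by $Ch^2\|w_{xx}\|_{L^1}$, which contributes $C\delta^{2-\alpha}\|w_{xx}\|_{L^1}$.
\item For $h>\delta$, bound the second difference by $2h\|w_x\|_{L^1}$, which contributes $C\delta^{1-\alpha}\|w_x\|_{L^1}$.
\end{itemize}
Minkowski's integral inequality justifies moving the $L^1_x$ norm inside the $h$-integral. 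Choosing $\delta=\|w_x\|_{L^1}/\|w_{xx}\|_{L^1}\sim 1+t$ then reproduces exactly the product bound above.
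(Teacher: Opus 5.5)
Your proposal is correct and follows the paper's proof: the paper applies Lemma~\ref{lem:fractional-interpolation} with $p=1$ together with \eqref{eq:shifted-viscous-bounds} to get $\|\Lambda^\alpha w(t,\cdot)\|_{L^1(\R)}\le C(1+t)^{-(\alpha-1)}$, and then absorbs $\|w_{xx}(t,\cdot)\|_{L^1(\R)}\le C(1+t)^{-1}$ using $\alpha<2$. Your backup second-difference argument (Tonelli in $x$, then splitting the $h$-integral at $\delta=\|w_x\|_{L^1}/\|w_{xx}\|_{L^1}$) is also sound, and it gives a self-contained proof of the $p=1$ endpoint of the interpolation inequality, which the paper only cites.
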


\begin{proof}
Applying \eqref{eq:fractional-interpolation} with $p=1$ and \eqref{eq:shifted-viscous-bounds}, we obtain
\[
\begin{aligned}
\|\Lambda^\alpha w(t,\cdot)\|_{L^1(\R)}
&\le C\|w_x(t,\cdot)\|_{L^1(\R)}^{2-\alpha}
\|w_{xx}(t,\cdot)\|_{L^1(\R)}^{\alpha-1}\\
&\le C(1+t)^{-(\alpha-1)}.
\end{aligned}
\]
Due to \eqref{eq:shifted-viscous-bounds},  it follows that $\|w_{xx}(t, \cdot)\|_{L^1(\R)}\le C(1+t)^{-1}\le C(1+t)^{1-\alpha}$. The same bound holds for $F$ by \eqref{F-definition}.
\end{proof}

Integrating \eqref{eq:residual-one} in time, we obtain an $L^1(\R)$ estimate for the perturbation.

\begin{lemma}\label{lem:perturbation-one}
If $z(0, \cdot)\in L^1(\R)$, then
\begin{equation}\label{eq:perturbation-one}
\|z(t, \cdot)\|_{L^1(\R)}\le C(1+t)^{2-\alpha},\qquad t\ge0.
\end{equation}
\end{lemma}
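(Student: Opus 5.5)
We prove \eqref{eq:perturbation-one} by an $L^1$ contraction (Kato-type) argument for the equation \eqref{F-definition}, followed by integrating the residual bound \eqref{eq:residual-one} in time. The key structural fact is that the flux term $(wz+\frac12 z^2)_x$ is a derivative, and the dissipation $\Lambda^\alpha$ is accretive in $L^1$, so neither produces growth of $\|z\|_{L^1}$. The only source is $F$. Integrating then gives
\[
\|z(t,\cdot)\|_{L^1(\R)}\le\|z(0,\cdot)\|_{L^1(\R)}+\int_0^t\|F(s,\cdot)\|_{L^1(\R)}\,ds\le \|z(0,\cdot)\|_{L^1(\R)}+C\int_0^t(1+s)^{1-\alpha}\,ds,
\]
and since $\alpha<2$ the integral is $\frac{(1+t)^{2-\alpha}-1}{2-\alpha}$. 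This yields $C(1+t)^{2-\alpha}$, with $C$ depending on $\|z(0,\cdot)\|_{L^1}$.

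To justify the inequality $\frac{d}{dt}\|z\|_{L^1}\le\|F\|_{L^1}$, I first work with smooth data, where $z(0,\cdot)$ is smooth and compactly supported (cf. the data $u_{0,n}$ of Lemma~\ref{lem:approximation}). I fix a smooth convex approximation $\eta_\delta$ of $|\cdot|$ with $|\eta_\delta'|\le1$, for instance $\eta_\delta(r)=\sqrt{r^2+\delta^2}-\delta$, and multiply \eqref{F-definition} by $\eta_\delta'(z)$ and a cutoff $\phi_R(x)=\phi(x/R)$. This produces three contributions.
\begin{itemize}
\item[(a)] The dissipative term. By convexity, $\eta_\delta'(z)\Lambda^\alpha z\ge\Lambda^\alpha\eta_\delta(z)$ pointwise; this follows from the integral representation \eqref{fractional-def}. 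Hence $\int\phi_R\,\eta_\delta'(z)\Lambda^\alpha z\ge\int\eta_\delta(z)\Lambda^\alpha\phi_R$, which is $O(R^{-\alpha})\|\eta_\delta(z)\|_{L^1}$.
\item[(b)] The flux term. Write $(wz+\frac12z^2)_x=(w+z)z_x+w_xz$. The transport part $\eta_\delta'(z)(w+z)z_x=(w+z)\partial_x\eta_\delta(z)$ integrates by parts into $-\int\eta_\delta(z)(w_x+z_x)\phi_R$ plus cutoff terms. Together with $\int w_xz\eta_\delta'(z)\phi_R$, this leaves
\[
\int w_x\bigl(z\eta_\delta'(z)-\eta_\delta(z)\bigr)\phi_R-\int z_x\eta_\delta(z)\phi_R.
\]
The first integral is nonnegative and bounded by $C\delta\|w_x\|_{L^1}$, since $0\le z\eta'_\delta(z)-\eta_\delta(z)\le\delta$. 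The second equals $\int G_\delta(z)\phi_R'$ with $G_\delta'=\eta_\delta$, so it is a cutoff term.
\item[(c)] The forcing. We have $|\int F\eta_\delta'(z)\phi_R|\le\|F\|_{L^1}$.
\end{itemize}

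The main obstacle is controlling the cutoff and $\delta$ error terms uniformly, so that we may let $R\to\infty$ and then $\delta\to0$. For this I need $z(t,\cdot)\in L^1$ on $[0,T]$ a priori for smooth compactly supported initial perturbations. I would obtain this from the Duhamel formula as in the proof of Lemma~\ref{lem:approximation}: the contraction in $L^\infty_t(L^1_x\cap L^\infty_x)$ works because $F\in L^1$ by Lemma~\ref{lem:residual-one}, and $\|\partial_x e^{-\tau\Lambda^\alpha}\|_{L^1\to L^1}\le C\tau^{-1/\alpha}$. With $z\in L^\infty_tL^1_x$ and $z$ bounded, the cutoff terms $\int G_\delta(z)\phi_R'$, the terms $(w+z)\eta_\delta(z)\phi_R'$, and the $\Lambda^\alpha\phi_R$ term all vanish as $R\to\infty$. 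Letting $\delta\to0$ then gives the differential inequality.

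For general $z(0,\cdot)\in L^1\cap L^\infty$, I would approximate by the data $u_{0,n}$; Young's inequality gives $\|z_n(0)\|_{L^1}\le\|z(0)\|_{L^1}$. Then I pass to the limit using the local convergence \eqref{eq:approximation-local-convergence} together with Fatou's lemma, which preserves the bound.
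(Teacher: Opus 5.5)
Your proposal is correct and follows essentially the same route as the paper. Both arguments obtain $z\in C([0,T];L^1)$ from the Duhamel formula by contraction in $L^1\cap L^\infty$, then test with a regularized sign function and spatial cutoffs, use the Kato inequality for the dissipation and the cancellation (in your version, a favorable sign via $w_x\ge0$) of the flux terms, and integrate \eqref{eq:residual-one}. The only difference is that you first treat smooth compactly supported perturbations and then approximate via \eqref{eq:approximation-local-convergence}. The paper applies the mild-solution argument directly to general data, so your detour is unnecessary but harmless, and it is not circular. Note that the contraction also needs $F\in L^\infty_x$, not just $F\in L^1_x$; this holds by \eqref{eq:fractional-interpolation} with $p=\infty$.
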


\begin{proof}
We first justify integrability of the perturbation. In \eqref{F-definition}, we write the flux as $(u+w)z/2$ and use the mild formulation with forcing $F$. On every finite time interval, the coefficient $(u+w)/2$ is bounded and $F\in L^1_t(L^1_x\cap L^\infty_x)$. Using the heat-kernel derivative bound $\|\partial_x e^{-t\Lambda^\alpha}\|_{L^1\to L^1}\le C_\alpha t^{-1/\alpha}$ from \cite[eq.~(2.4)]{DGV2003}, we construct the mild solution on successive short intervals in $L^1\cap L^\infty$. Uniqueness of the bounded mild solution identifies it with $z$. Thus $z\in C([0,T];L^1(\R))$ for every $T>0$.

\smallskip
We now test the perturbation equation with a smooth approximation of $\operatorname{sgn}(z)$ and spatial cutoffs, and then pass to the limit. Applying the fractional Kato inequality from \cite[Lemma~1]{DI2006}, we find that the dissipation is nonnegative. We also compute
\[
\int_\R(wz)_x\operatorname{sgn}(z)\,dx
=\int_\R(w|z|)_x\,dx=0,
\qquad
\frac12\int_\R(z^2)_x\operatorname{sgn}(z)\,dx
=\frac12\int_\R(z|z|)_x\,dx=0.
\]
Consequently, applying Lemma~\ref{lem:residual-one}, we obtain
\[
\begin{aligned}
\|z(t,\cdot)\|_{L^1(\R)}
&\le\|z(0,\cdot)\|_{L^1(\R)}+C\int_0^t(1+s)^{1-\alpha}\,ds\\
&=\|z(0,\cdot)\|_{L^1(\R)}
+\frac{C}{2-\alpha}\bigl[(1+t)^{2-\alpha}-1\bigr]\\
&\le C(1+t)^{2-\alpha}.
\end{aligned}
\]
The proof is then complete.
\end{proof}

\smallskip
To control the perturbation in the critical weak-Lorentz space, we first estimate its truncations.

\begin{lemma}\label{truncation-lemma}
Assume $z(0, \cdot)\in L^{q_c,\infty}(\R)$. For $k>0$, we set
\[
\Phi_k(r)=(|r|-k)_+,
\quad \text{and} \quad Y_k(t)=\int_\R\Phi_k(z(t,x))\,dx.
\]
Then, for $t \in [0,\infty)$ and $t_0 \in [0,t]$,
\begin{equation}\label{eq:truncation}
Y_k(t)\le Y_k(t_0)
+\int_{t_0}^t\int_{\{|z(s, \cdot)|>k\}}|F(s,x)|\,dx\,ds.
\end{equation}
\end{lemma}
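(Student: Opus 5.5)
The approach is to test the perturbation equation \eqref{F-definition} against $\Phi_k'(z)=\operatorname{sgn}(z)\mathbf 1_{\{|z|>k\}}$, in a regularized form, and to show that every term other than the forcing has a favorable sign or integrates to zero. The regularization is a smooth convex approximation $\Phi_{k,\varepsilon}$ of $\Phi_k$ with $\Phi_{k,\varepsilon}(r)=0$ for $|r|\le k$, $0\le\Phi_{k,\varepsilon}'' $, and $|\Phi_{k,\varepsilon}'|\le\mathbf 1_{\{|r|>k\}}$.

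We first reduce to the approximating solutions $u_n$ of Lemma~\ref{lem:approximation}. For these, $z_n(0,\cdot)\in C_c^\infty$, and $z_n\in C([0,T];L^r)$ for $r>q_c$ by \eqref{eq:approximation-lr-bounds}. Since $\Phi_k(r)\le |r|^r k^{1-r}$, the quantity $Y_k$ is finite along $z_n$. In fact $z_n\in L^1$ by Lemma~\ref{lem:perturbation-one}, which makes the computation clean. The solutions are smooth for positive time, so we may differentiate $\int\Phi_{k,\varepsilon}(z_n)\,dx$ on $[t_0,t]$ for $t_0>0$, inserting spatial cutoffs if needed to justify integration by parts.

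The individual terms are handled as follows.
\begin{itemize}
\item \emph{Dissipation.} By convexity and the Kato/Córdoba--Córdoba type inequality \cite[Lemma~1]{DI2006}, $\int\Lambda^\alpha z\,\Phi_{k,\varepsilon}'(z)\,dx\ge0$. Pointwise one has $\Phi'(z(x))(z(x)-z(y))\ge \Phi(z(x))-\Phi(z(y))$, and integrating against the kernel gives $\int\Lambda^\alpha\Phi(z)=0$.
\item \emph{Quadratic flux.} We have $\int(\tfrac12z^2)_x\Phi'(z)\,dx=\int\partial_x G(z)\,dx=0$, where $G'(r)=r\Phi'(r)$.
\item \emph{Linear flux.} This is the key use of monotonicity: $(wz)_x\Phi'(z)=w_xz\Phi'(z)+w\,\partial_x\Phi(z)$. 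Integrating by parts gives
\[
\int w_x\bigl(z\Phi'(z)-\Phi(z)\bigr)\,dx .
\]
Since $\Phi$ is convex with $\Phi(0)=0$, we have $z\Phi'(z)-\Phi(z)\ge0$, and $w_x>0$ by Theorem~\ref{thm:viscous-profile}. So this term enters with a favorable sign. Moving it to the left-hand side, it is nonnegative, and we drop it.
\item \emph{Forcing.} We have $|\int F\Phi'(z)\,dx|\le\int_{\{|z|>k\}}|F|\,dx$.
\end{itemize}
Integrating in time and letting $\varepsilon\to0$ by monotone convergence yields \eqref{eq:truncation} for $z_n$.

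The remaining step is passing to $n\to\infty$, with $t_0\downarrow0$ handled as well. By \eqref{eq:approximation-convergence} we have $z_n(s)\to z(s)$ in $L^r$ for $r>q_c$, uniformly on $[0,T]$. The maps $Y_k$ are continuous on $L^r$ restricted to sets bounded in $L^\infty$, because $|\Phi_k(a)-\Phi_k(b)|\le|a-b|\,\mathbf 1_{\{\max(|a|,|b|)>k\}}$ and the set $\{|z_n|>k/2\}$ has uniformly bounded measure. This gives $Y_k(z_n(s))\to Y_k(z(s))$ for every $s\in[0,T]$, including $s=0$ and $s=t_0$. Continuity in time of $z_n$ in $L^r$ covers $t_0=0$.

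For the forcing term we note that $F$ is independent of $n$ and lies in $L^{q_c,\infty}\cap L^\infty$, hence is locally integrable with $\int_E|F|\le C|E|^{1-1/q_c}$ by \eqref{eq:weak-set-estimate}. The indicator $\mathbf 1_{\{|z_n|>k\}}$ converges in measure on level sets, but may fail to converge on $\{|z|=k\}$. This is the main obstacle. To handle it, we replace $k$ by $k'<k$ in the bound for $z_n$, or use $\limsup_n\mathbf 1_{\{|z_n|>k\}}\le\mathbf 1_{\{|z|\ge k\}}$ a.e. along a subsequence, together with dominated convergence using the uniform measure bound. Since $|\{|z|=k\}|=0$ for all but countably many $k$, this gives \eqref{eq:truncation} for a.e. $k$. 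For the exceptional $k$, we let $k_j\downarrow k$ and use monotone convergence on both sides: $Y_{k_j}\uparrow Y_k$ and $\{|z|>k_j\}\uparrow\{|z|>k\}$.
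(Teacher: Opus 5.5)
Your proposal follows the paper's proof: the same regularized test function $\Phi_k'(z)$, the Kato inequality for the dissipation, the total-derivative and $w_x\ge0$ sign arguments for the fluxes, and the same passage $n\to\infty$ via $L^r$ convergence of $Y_k$ and a reverse Fatou argument on the forcing term. The only variation is how you treat the boundary set $\{|z|=k\}$. You use an a.e.-$k$ argument followed by monotone convergence $k_j\downarrow k$, and this works. The paper instead runs the approximants at level $k+\varepsilon$ and then lets $\varepsilon\downarrow0$; in your first alternative the shifted level must therefore be $k'>k$, not $k'<k$. For the domination in Fatou you can simply cite $F\in L^1((0,T)\times\R)$ from Lemma~\ref{lem:residual-one}, as the paper does, rather than the uniform measure bound.
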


\begin{proof}
We first consider the smooth compactly supported initial perturbations in Lemma~\ref{lem:approximation}. We test \eqref{F-definition} with $\Phi_k'(z)=\operatorname{sgn}(z)\mathbf 1_{\{|z|>k\}}$, using smooth convex approximations and spatial cutoffs to justify the calculation. Applying the convex Kato inequality $\Lambda^\alpha\Phi(z)\le\Phi'(z)\Lambda^\alpha z$ from \cite[Lemma~1]{DI2006} and integrating, we obtain
\[
\int_\R\Lambda^\alpha z\,\Phi_k'(z)\,dx\ge0.
\]
The quadratic flux is a total derivative:
\[
\frac12\int_\R(z^2)_x\Phi_k'(z)\,dx
=\frac12\int_\R\partial_x\Bigl[\operatorname{sgn}(z)(z^2-k^2)_+\Bigr]\,dx=0.
\]
For the mixed flux, we integrate by parts to obtain
\[
\int_\R(wz)_x\Phi_k'(z)\,dx
=\int_\R w_x\bigl(z\Phi_k'(z)-\Phi_k(z)\bigr)\,dx
=k\int_{\{|z|>k\}}w_x\,dx\ge0.
\]
Thus
\[
Y_k'(t)\le\int_{\{|z(t, \cdot)|>k\}}|F(t,x)|\,dx
\]
in the sense of distributions in time, and we obtain \eqref{eq:truncation} by integrating this last inequality.

\smallskip
For general data, we fix $T>0$ and $r\in(q_c,\infty)$ and use Lemma~\ref{lem:approximation}. For $k>0$, we have
\[
|\Phi_k(a)-\Phi_k(b)|\le |a-b|\mathbf1_{\{|a|>k\}\cup\{|b|>k\}}.
\]
By H\"older's inequality and \eqref{eq:approximation-lr-bounds},
\[
\begin{aligned}
&\int_\R\bigl|\Phi_k(u_n(s,x)-w(s,x))-\Phi_k(z(s,x))\bigr|\,dx\\
&\quad\le C_Tk^{1-r}\|u_n(s,\cdot)-u(s,\cdot)\|_{L^r(\R)}\longrightarrow0,
\qquad s\in[0,T].
\end{aligned}
\]
By \eqref{eq:approximation-convergence}, we choose a subsequence such that $u_n(s,x)\to u(s,x)$ for almost every $(s,x)\in(0,T)\times\R$. For every $\varepsilon>0$,
\[
\limsup_{n\to\infty}\mathbf1_{\{|u_n(s,x)-w(s,x)|>k+\varepsilon\}}
\le\mathbf1_{\{|z(s,x)|\ge k+\varepsilon\}}
\le\mathbf1_{\{|z(s,x)|>k\}}
\quad\text{a.e.}
\]
Since $F\in L^1((0,T)\times\R)$, we apply Fatou's lemma to
$|F|\bigl(1-\mathbf1_{\{|u_n-w|>k+\varepsilon\}}\bigr)$ and obtain
\[
\limsup_{n\to\infty}\int_{t_0}^t\!\int_{\{|u_n(s,\cdot)-w(s,\cdot)|>k+\varepsilon\}}|F(s,x)|\,dx\,ds
\le\int_{t_0}^t\!\int_{\{|z(s,\cdot)|>k\}}|F(s,x)|\,dx\,ds.
\]
Passing to the limit in the truncation inequality for $u_n-w$ at level $k+\varepsilon$, we obtain
\[
Y_{k+\varepsilon}(t)\le Y_{k+\varepsilon}(t_0)
+\int_{t_0}^t\!\int_{\{|z(s,\cdot)|>k\}}|F(s,x)|\,dx\,ds.
\]
As $\varepsilon\downarrow0$, we have $\Phi_{k+\varepsilon}(z)\uparrow\Phi_k(z)$ and hence $Y_{k+\varepsilon}(s)\uparrow Y_k(s)$ for $s\in\{t_0,t\}$. Thus we obtain \eqref{eq:truncation} for every $k>0$ and $0\le t_0\le t\le T$.
\end{proof}

Combining \eqref{eq:truncation} with \eqref{eq:residual-critical}, we obtain logarithmic control of the weak critical norm.

\begin{lemma}\label{lem:weak-critical}
If $z(0, \cdot)\in L^{q_c,\infty}(\R)$, then
\begin{equation}\label{eq:weak-critical}
\|z(t, \cdot)\|_{\weakL{q_c}}
\le C\Bigl(\|z(0, \cdot)\|_{\weakL{q_c}}+\log(1+t)\Bigr),\qquad t\ge0.
\end{equation}
Here $C$ depends only on $\alpha$ and the end states $u_{\pm}$.
\end{lemma}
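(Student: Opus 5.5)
The plan is to convert the truncation inequality \eqref{eq:truncation} into a bound on the distribution function of $z(t,\cdot)$. This works through the elementary equivalence between the weak-Lorentz quasi-norm and the growth of the truncated integrals $Y_k$. First, for $f\in L^{q_c,\infty}(\R)$ and $k>0$,
\[
\int_\R(|f|-k)_+\,dx=\int_k^\infty|\{|f|>\lambda\}|\,d\lambda\le\frac{1}{q_c-1}\|f\|_{\weakL{q_c}}^{q_c}k^{1-q_c}.
\]
Conversely, since $|f|>2k$ implies $(|f|-k)_+>k$, Chebyshev gives
\[
|\{|f|>2k\}|\le k^{-1}\int(|f|-k)_+.
\]
Hence
\[
\|f\|_{\weakL{q_c}}^{q_c}\le C\sup_{k>0}k^{q_c-1}\int(|f|-k)_+\,dx .
\]
It therefore suffices to bound $M(t)=\sup_{k>0}k^{q_c-1}Y_k(t)$, which is comparable to $\|z(t)\|_{\weakL{q_c}}^{q_c}$.

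Next, I estimate the forcing term in \eqref{eq:truncation} with $t_0=0$. By \eqref{eq:weak-set-estimate} with $E=\{|z(s)|>k\}$, together with Lemma~\ref{lem:residual-critical},
\[
\int_{\{|z(s)|>k\}}|F(s)|\le C(1+s)^{-1}|\{|z(s)|>k\}|^{1-1/q_c}.
\]
A direct substitution gives $|\{|z|>k\}|\le k^{-q_c}N(s)^{q_c}$ with $N=\|z\|_{\weakL{q_c}}$. This yields
\[
k^{q_c-1}Y_k(t)\le k^{q_c-1}Y_k(0)+C\int_0^t(1+s)^{-1}N(s)^{q_c-1}\,ds .
\]
This exponent bookkeeping is the key point: $k^{q_c-1}\cdot k^{-(q_c-1)}=1$, so the right side is uniform in $k$. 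Taking the supremum over $k$ and using the equivalence above,
\[
N(t)^{q_c}\le C N(0)^{q_c}+C\int_0^t\frac{N(s)^{q_c-1}}{1+s}\,ds .
\]

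The remaining step is a nonlinear Gronwall (Bihari) argument. Let $G(t)$ denote the right side. Then $G'\le C(1+t)^{-1}G^{(q_c-1)/q_c}$, so $(G^{1/q_c})'\le C q_c^{-1}(1+t)^{-1}$. Integrating gives
\[
N(t)\le G(t)^{1/q_c}\le C N(0)+C\log(1+t),
\]
which is \eqref{eq:weak-critical}. The constants depend only on $\alpha$ and $u_\pm$ through \eqref{eq:residual-critical}.

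The main obstacle is a priori finiteness of $N(s)$ on $[0,T]$, which the Gronwall argument needs, together with measurability in $s$. I would obtain it from \eqref{eq:approximation-lr-bounds}: the bound $\|z(s)\|_{L^r}\le C_T$ for some $r>q_c$ controls the large-level part of the distribution function. For small levels I expect to need either the truncation inequality itself, applied at each fixed $k$ with the crude bound $\int_E|F|\le\|F\|_{L^1}$, or running the whole argument for the approximants $u_n$ with compactly supported perturbations and then passing to the limit via Fatou. A second, lighter point is that the inequality for $N$ is only obtained up to the multiplicative constants from the norm equivalence. This does not affect the Bihari step, since those constants are absorbed into $C$.
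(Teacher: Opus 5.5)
Your proposal is correct and is essentially the paper's argument: it combines the truncation inequality \eqref{eq:truncation}, the forcing bound from \eqref{eq:weak-set-estimate} and \eqref{eq:residual-critical} (where the powers of $k$ cancel), and an absorption step. The paper closes with $\sup_{\tau\in[0,T]}\|z(\tau,\cdot)\|_{\weakL{q_c}}$ and Young's inequality rather than your Bihari step, which sidesteps measurability of $s\mapsto\|z(s,\cdot)\|_{\weakL{q_c}}$. It gets a priori finiteness by first assuming $z(0,\cdot)\in L^1(\R)$ (Lemma~\ref{lem:perturbation-one}) and then passing to general data through the approximants and Fatou, which is your second option.

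Your first option also works as stated, and it removes the final approximation step, since Lemma~\ref{truncation-lemma} already holds for general data. Bounding the forcing by $\|F(s,\cdot)\|_{L^1(\R)}\le C(1+s)^{1-\alpha}$ (Lemma~\ref{lem:residual-one}) controls small $k$. The uniform $L^\infty$ (or $L^r$) bound controls large $k$.
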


\begin{proof}
We fix $T>0$ and first assume additionally that $z(0,\cdot)\in L^1(\R)$. By Lemma~\ref{lem:perturbation-one} and \eqref{eq:bounded-profile-perturbation},
\[
\sup_{\tau\in[0,T]}\|z(\tau,\cdot)\|_{\weakL{q_c}}\le\sup_{s\in[0,T]}
\|z(s,\cdot)\|_{L^1(\R)}^{1/q_c}
\|z(s,\cdot)\|_{L^\infty(\R)}^{1-1/q_c}<\infty.
\] By \eqref{eq:weak-set-estimate} and \eqref{eq:residual-critical},
\[
\begin{aligned}
\int_{\{|z(s,\cdot)|>k\}}|F(s,x)|\,dx
&\le C\|F(s,\cdot)\|_{\weakL{q_c}}
|\{|z(s,\cdot)|>k\}|^{1-1/q_c}\\
&\le C(1+s)^{-1}\bigl(\sup_{\tau\in[0,T]}\|z(\tau,\cdot)\|_{\weakL{q_c}}^{q_c}k^{-q_c}\bigr)^{1-1/q_c}\\
&=C(1+s)^{-1}\sup_{\tau\in[0,T]}\|z(\tau,\cdot)\|_{\weakL{q_c}}^{q_c-1}k^{1-q_c}.
\end{aligned}
\]
Also, we note that
\[
Y_k(0)=\int_k^\infty|\{|z(0, \cdot)|>\lambda\}|\,d\lambda
\le\frac{\|z(0,\cdot)\|_{\weakL{q_c}}^{q_c}}{q_c-1}k^{1-q_c},
\]
where $Y_k$ is defined in Lemma \ref{truncation-lemma}. 
Since $k|\{|z(t, \cdot)|>2k\}|\le Y_k(t)$, we infer from Lemma~\ref{truncation-lemma} that
\[
k^{q_c}|\{|z(t, \cdot)|>2k\}|
\le C \|z(0,\cdot)\|_{\weakL{q_c}}^{q_c}+C \sup_{\tau\in[0,T]}\|z(\tau,\cdot)\|_{\weakL{q_c}}^{q_c-1}\log(1+T),
\]
for $t \in [0,T]$. Taking the supremum over $k>0$ and $t \in [0,T]$ and applying Young's inequality, we obtain
\[
\sup_{\tau\in[0,T]}\|z(\tau,\cdot)\|_{\weakL{q_c}}^{q_c}\le C \|z(0,\cdot)\|_{\weakL{q_c}}^{q_c}+\frac12\sup_{\tau\in[0,T]}\|z(\tau,\cdot)\|_{\weakL{q_c}}^{q_c}
+C[\log(1+T)]^{q_c}.
\]
Subtracting the term with coefficient $1/2$ and taking the $q_c$-th root, we obtain
\[
\sup_{\tau\in[0,T]}\|z(\tau,\cdot)\|_{\weakL{q_c}}
\le C\Bigl[\|z(0,\cdot)\|_{\weakL{q_c}}+\log(1+T)\Bigr].
\]

\smallskip
For general data, we fix $t>0$ and choose $(u_{n_k})$ realizing the lower limit of the weak norms. By Lemma~\ref{lem:approximation}, we can choose these indices so that
\[
u_{n_k}(t,x)\to u(t,x)\quad\text{for almost every }x\in\R.
\]
Applying Fatou's lemma to the level sets, we obtain
\[
\begin{aligned}
\|z(t,\cdot)\|_{\weakL{q_c}}
&\le\liminf_{n\to\infty}\|u_n(t,\cdot)-w(t,\cdot)\|_{\weakL{q_c}}\\
&\le C\Bigl(\|z(0,\cdot)\|_{\weakL{q_c}}+\log(1+t)\Bigr).
\end{aligned}
\]
\end{proof}

At the critical exponent $q_c$, we use a uniform $L^\infty(\R)$-bound to obtain a strong $L^{q_c}(\R)$ estimate with an additional logarithmic factor.

\begin{lemma}\label{lem:strong-critical}
Assume $z(0, \cdot)\in L^1(\R)\cap L^{q_c,\infty}(\R)$. 
Then
\begin{equation}\label{eq:strong-critical}
\|z(t, \cdot)\|_{L^{q_c}(\R)}\le C\bigl[1+\log(2+t)\bigr]^\alpha,\qquad t\ge0.
\end{equation}
\end{lemma}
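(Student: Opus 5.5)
The plan is to interpolate, at each fixed time $t$, between three bounds we already have: the weak critical bound of Lemma~\ref{lem:weak-critical}, the $L^1$ growth bound of Lemma~\ref{lem:perturbation-one}, and the uniform bound \eqref{eq:bounded-profile-perturbation}. We insert them into the layer-cake formula
\[
\|z(t,\cdot)\|_{L^{q_c}(\R)}^{q_c}=q_c\int_0^{M}\lambda^{q_c-1}\bigl|\{|z(t,\cdot)|>\lambda\}\bigr|\,d\lambda ,
\qquad M=\|u_0\|_{L^\infty(\R)}+\max\{|u_-|,|u_+|\}.
\]
The point is that the weak-Lorentz bound alone gives an integrand of size $W(t)^{q_c}\lambda^{-1}$, where $W(t)=\|z(t,\cdot)\|_{\weakL{q_c}}$, and this diverges logarithmically at both ends of the $\lambda$-integral. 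The upper end is cut off at $M$ by the $L^\infty$ bound. The lower end is handled by the $L^1$ bound.

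Fix $\lambda_0\in(0,M)$ and split the integral at $\lambda_0$. On $(0,\lambda_0)$ we use Chebyshev's inequality together with Lemma~\ref{lem:perturbation-one}, namely $|\{|z|>\lambda\}|\le \lambda^{-1}\|z(t,\cdot)\|_{L^1(\R)}\le C\lambda^{-1}(1+t)^{2-\alpha}$. This part is then bounded by
\[
\frac{C q_c}{q_c-1}\lambda_0^{\,q_c-1}(1+t)^{2-\alpha}.
\]
On $(\lambda_0,M)$ we use \eqref{eq:weak-level-set}, which gives the bound $q_c W(t)^{q_c}\log(M/\lambda_0)$.

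Since $q_c-1=(2-\alpha)/(\alpha-1)$, the choice $\lambda_0=c(1+t)^{-(\alpha-1)}$ makes the first piece $O(1)$. The logarithm then becomes $\log(M/\lambda_0)\le C[1+\log(2+t)]$. If $\lambda_0\ge M$, only the first piece is present and the bound is trivial.

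Finally, Lemma~\ref{lem:weak-critical} (whose hypothesis holds) gives $W(t)\le C[1+\log(2+t)]$. Combining,
\[
\|z(t,\cdot)\|_{L^{q_c}(\R)}^{q_c}\le C+C[1+\log(2+t)]^{q_c+1}.
\]
Taking the $q_c$-th root and using $1+1/q_c=1+(\alpha-1)=\alpha$ yields \eqref{eq:strong-critical}.

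The only delicate point is the choice of the splitting level $\lambda_0$. The polynomial growth $(1+t)^{2-\alpha}$ of the $L^1$ norm must be exactly compensated by $\lambda_0^{q_c-1}$. The dependence of $\lambda_0$ on $t$ then costs only a logarithm, and this logarithm accounts for the extra power beyond the $\log^{q_c}$ coming from $W(t)^{q_c}$. No approximation argument is needed, because all three input bounds hold directly for $z$ at each fixed $t$.
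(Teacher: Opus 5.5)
Your proposal is correct and matches the paper's proof: it uses the same layer-cake splitting at level $(1+t)^{-(\alpha-1)}$, Chebyshev with the $L^1$ growth bound below that level, and the weak-critical bound with the $L^\infty$ cutoff $M$ above it. The only cosmetic difference is that the paper chooses $M\ge1$, which makes your separate case $\lambda_0\ge M$ unnecessary.
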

\begin{proof}
We note from \eqref{eq:bounded-profile-perturbation} that
and
\begin{equation}\label{eq:uniform-perturbation-bound}
\sup_{t\ge0}\|z(t, \cdot)\|_{L^\infty(\R)}<\infty.
\end{equation}
We choose $M\ge1$ such that $\|z(t, \cdot)\|_{L^\infty(\R)}\le M$ for all $t$. We split the integral below at $(1+t)^{-(\alpha-1)}$. Applying Lemmas~\ref{lem:perturbation-one} and~\ref{lem:weak-critical}, we obtain
\[
\begin{aligned}
\|z(t, \cdot)\|_{L^{q_c}(\R)}^{q_c}
&=q_c\int_0^M\lambda^{q_c-1}|\{x\in\R:|z(t,x)|>\lambda\}|\,d\lambda\\
&\le q_c\|z(t, \cdot)\|_{L^1(\R)}
\int_0^{(1+t)^{-(\alpha-1)}}\lambda^{q_c-2}\,d\lambda+q_c\|z(t, \cdot)\|_{\weakL{q_c}}^{q_c}
\int_{(1+t)^{-(\alpha-1)}}^M\frac{d\lambda}{\lambda}\\
&\le\frac{q_c}{q_c-1}\|z(t,\cdot)\|_{L^1(\R)}(1+t)^{-(2-\alpha)}\\
&\qquad+q_c\|z(t,\cdot)\|_{\weakL{q_c}}^{q_c}
\bigl[\log M+(\alpha-1)\log(1+t)\bigr]\\
&\le C+C\bigl[1+\log(2+t)\bigr]^{q_c+1}.
\end{aligned}
\]
Here $(\alpha-1)(q_c-1)=2-\alpha$, so the first term is bounded uniformly in time. Taking the $q_c$-th root and using $1+1/q_c=\alpha$, we obtain the result.
\end{proof}

\section{Proof of main results}\label{sec:decay}

We first estimate the difference from the viscous profile. Throughout this section, $w(t, \cdot)=V(t+1, \cdot)$ is the shifted profile in Corollary~\ref{cor:shifted-viscous-profile}, and $z=u-w$.

\begin{lemma}\label{lem:supercritical-decay}
Let $p \in (q_c,\infty]$ and let $u$ be the bounded solution defined by Theorem~\ref{thm:existence}. If
\[
u(0, \cdot)-w(0, \cdot)\in L^p(\R)\cap L^{q_c,\infty}(\R),
\]
then
\[
\|z(t, \cdot)\|_{L^p(\R)}
\le C_p(1+t)^{-\frac{\alpha-1-1/p}{\alpha}}
\bigl[1+\log(2+t)\bigr],\qquad t\ge0
\]
where
\[
C_p= C\Bigl(
1+\|z(0, \cdot)\|_{L^p(\R)}+\|z(0, \cdot)\|_{\weakL{q_c}}\Bigr)
\]
with $C = C(\alpha,p,u_-,u_+)>0$.
\end{lemma}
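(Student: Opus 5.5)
We first reduce to the smooth data of Lemma~\ref{lem:approximation}, for which $z(0,\cdot)\in C_c^\infty(\R)$ and all integrations by parts are justified. The general case then follows by Fatou's lemma along an a.e.\ convergent subsequence, using \eqref{eq:approximation-local-convergence} and the uniform initial bounds \eqref{eq:approximation-initial-bounds}. Throughout we use Lemma~\ref{lem:weak-critical}, which gives $N(t):=\|z(t,\cdot)\|_{\weakL{q_c}}\le C_0[1+\log(2+t)]$ with $C_0$ depending on $\|z(0,\cdot)\|_{\weakL{q_c}}$. We also use the uniform bound \eqref{eq:bounded-profile-perturbation}.

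\textbf{Case $p\in(q_c,\infty)$.} Test \eqref{F-definition} with $|z|^{p-2}z$.

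For the dissipation, the Stroock--Varopoulos inequality gives
\[
\int\Lambda^\alpha z\,|z|^{p-2}z\ge c_p\|\Lambda^{\alpha/2}|z|^{p/2}\|_{L^2}^2 .
\]
The quadratic flux integrates to zero. The mixed flux equals $\frac{p-1}{p}\int w_x|z|^p\ge0$, since $w_x>0$. The forcing is handled by \eqref{eq:forcing-interpolation} with $g=F$ and Lemma~\ref{lem:residual-critical}.

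Writing $D=\|\Lambda^{\alpha/2}|z|^{p/2}\|_{L^2}^2$, this yields
\[
\frac{d}{dt}\|z\|_{L^p}^p+cD\le C(1+t)^{-\frac{p+1}{q_c+1}}N(t)^{\frac{pq_c-1}{q_c+1}}.
\]
Next multiply by $(1+t)^\beta$. The extra term $\beta(1+t)^{\beta-1}\|z\|_p^p$ is absorbed using \eqref{eq:weighted-interpolation} with $\lambda=\beta(1+t)^{-1}$, at the cost of $C(1+t)^{\beta-\frac{p+1}{q_c+1}}N^p$. Since $\frac{p+1}{q_c+1}=\frac{(p+1)(\alpha-1)}{\alpha}$, choose $\beta$ large and integrate from $0$ to $t$. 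Both source terms grow like $(1+t)^{\beta+1-\frac{(p+1)(\alpha-1)}{\alpha}}$ times a power of the logarithm, so
\[
\|z(t)\|_p^p\lesssim(1+t)^{-\beta}\|z(0)\|_p^p+(1+t)^{1-\frac{(p+1)(\alpha-1)}{\alpha}}[\log]^{p}.
\]
Note that $1-\frac{(p+1)(\alpha-1)}{\alpha}=-\frac{p(\alpha-1)-1}{\alpha}=-p\cdot\frac{\alpha-1-1/p}{\alpha}$, which is exactly the target rate after taking $p$-th roots.

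The forcing contributes $N^{(pq_c-1)/(q_c+1)}$, whose log power is below $p$. The $N^p$ term gives exactly $[\log]^p$, so the $p$-th root carries a single logarithm as claimed. Finally, $(1+t)^{-\beta/p}\|z(0)\|_p$ is dominated by the target rate once $\beta/p$ exceeds the decay exponent, and this term is absent for $t\ge1$ after restarting the argument at $t/2$.

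\textbf{Case $p=\infty$.} Use the Oleinik-type bound $u_x(t,\cdot)\le 1/t$. I would prove it by applying Lemma~\ref{lem:linear-comparison} to $v=u_x$, which satisfies $v_t+\Lambda^\alpha v+uv_x+v^2=0$, with supersolution $1/t$. Combined with $w_x\ge0$, this gives $z_x\le 1/t$.

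Suppose $z(t,x_0)=A>0$. Then $z(t,y)\ge A-(x_0-y)/t\ge A/2$ for $y\in[x_0-At/2,x_0]$. The weak norm then gives
\[
\tfrac{A}{4}(At/2)^{1/q_c}\le N(t),
\]
so $A^{\alpha}\lesssim N(t)\,t^{-(\alpha-1)}$, that is, $A\lesssim N^{1/\alpha}t^{-(\alpha-1)/\alpha}$, which is the claimed rate. Negative values are handled symmetrically using an interval to the right of $x_0$.

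For $t\le1$, the bound \eqref{eq:bounded-profile-perturbation} suffices. Here $\|z(0)\|_\infty\lesssim\|u_0\|_\infty$, and this depends only on the data through the stated constants.

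\textbf{Main obstacle.} The delicate point is the time-weighted energy step: making the exponents match so that exactly one logarithm survives, and rigorously justifying the Stroock--Varopoulos and integration-by-parts steps when $z(t)$ is only known to be bounded and in $L^p$. For the latter I would rely on the compactly supported approximations and \eqref{eq:approximation-lr-bounds}.
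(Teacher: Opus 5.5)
Your proposal is correct and follows the paper's proof essentially step for step: the $|z|^{p-2}z$ energy estimate with Stroock--Varopoulos and the two flux identities, a time weight whose derivative term is absorbed via \eqref{eq:weighted-interpolation}, the forcing handled by \eqref{eq:forcing-interpolation} and Lemma~\ref{lem:weak-critical}, and, for $p=\infty$, the Oleinik bound $z_x\le 1/t$ combined with the weak critical norm. The differences are minor. The paper takes the exact weight $\beta=\frac{p+1}{q_c+1}$, so the source term is exactly $N^p$. It justifies the computation directly on positive time intervals, using the mild formulation to get $\limsup_{s\downarrow0}\|z(s,\cdot)\|_{L^p}\le\|z(0,\cdot)\|_{L^p}$, rather than by smooth approximation and Fatou. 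For $t\le1$ it uses a comparison argument, whereas your appeal to \eqref{eq:bounded-profile-perturbation} together with $\|u_0\|_{L^\infty}\le\|z(0,\cdot)\|_{L^\infty}+\max\{|u_-|,|u_+|\}$ works equally well. That is the inequality you need, not the reverse bound $\|z(0,\cdot)\|_{L^\infty}\lesssim\|u_0\|_{L^\infty}$ that you wrote.
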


\begin{proof}[Proof of Lemma~\ref{lem:supercritical-decay}] By \eqref{eq:approximation-lr-bounds} and \eqref{eq:bounded-profile-perturbation}, for $T>0$ and $p\in(q_c,\infty]$,
\[
\sup_{\tau\in[0,T]}\|z(\tau,\cdot)\|_{L^p(\R)}<\infty.
\]
By \eqref{eq:shifted-viscous-bounds}, \eqref{eq:fractional-interpolation}, and \eqref{eq:residual-one}, we also have $\sup_{\tau\in[0,T]}\|F(\tau,\cdot)\|_{L^p(\R)}<\infty$. Using the mild form of \eqref{F-definition} and the kernel derivative estimate in \cite[eq.~(2.4)]{DGV2003}, we obtain
\[
\begin{aligned}
\|z(s,\cdot)-e^{-s\Lambda^\alpha}z(0,\cdot)\|_{L^p(\R)}
&\le\frac{C_\alpha}{2}\int_0^s(s-\tau)^{-1/\alpha}
\|[(u+w)z](\tau,\cdot)\|_{L^p(\R)}\,d\tau\\
&\quad+\int_0^s\|F(\tau,\cdot)\|_{L^p(\R)}\,d\tau
\le C_T\bigl(s^{1-1/\alpha}+s\bigr).
\end{aligned}
\]
Since $e^{-s\Lambda^\alpha}$ is a contraction on $L^p(\R)$,
\begin{equation}\label{eq:perturbation-initial-norm}
\limsup_{s\downarrow0}\|z(s,\cdot)\|_{L^p(\R)}
\le\|z(0,\cdot)\|_{L^p(\R)},\qquad p\in(q_c,\infty].
\end{equation}

\smallskip
\noindent{\bf Case 1}: We consider $p \in (q_c,\infty)$. On a positive time interval, we test \eqref{F-definition} with $|z|^{p-2}z$, using spatial cutoffs and then letting their radius tend to infinity. Applying the Stroock--Varopoulos inequality (see \cite[Lemma 2.3]{KMX2008}), we obtain
\begin{equation} \label{diff-ineq}
 \frac1p\frac{d}{dt}\|z(t, \cdot)\|_{L^p(\R)}^p
+c_{\alpha,p}\bigl\|\Lambda^{\alpha/2}|z(t, \cdot)|^{p/2}\bigr\|_{L^2(\R)}^2+\frac{p-1}{p}\int_\R w_x|z|^p\,dx
\le\Bigl|\int_\R F|z|^{p-2}z\,dx\Bigr|.
\end{equation}
To see this, we note that for $p \in (1,2)$, we use the regularized test $z(|z|^2+\delta)^{(p-2)/2}$ and then let $\delta\downarrow0$. In addition, for the flux terms, we compute
\[
\begin{aligned}
\frac12\int_\R(z^2)_x|z|^{p-2}z\,dx
&=\frac1{p+1}\int_\R\partial_x(z|z|^p)\,dx=0,\\
\int_\R(wz)_x|z|^{p-2}z\,dx
&=\int_\R w_x|z|^p\,dx+\frac1p\int_\R w\partial_x|z|^p\,dx=\frac{p-1}{p}\int_\R w_x|z|^p\,dx\ge0.
\end{aligned}
\]

\smallskip
We also note that by integrating \eqref{diff-ineq}  over $[s,T]$ with $s>0$ and applying H\"older's inequality, we obtain
\[
\begin{aligned}
c_{\alpha,p}\int_s^T\bigl\|\Lambda^{\alpha/2}|z(\tau,\cdot)|^{p/2}\bigr\|_{L^2(\R)}^2\,d\tau
&\le\frac1p\|z(s,\cdot)\|_{L^p(\R)}^p\\
&\quad+\int_s^T\|F(\tau,\cdot)\|_{L^p(\R)}
\|z(\tau,\cdot)\|_{L^p(\R)}^{p-1}\,d\tau<\infty.
\end{aligned}
\]
Thus, from now on, Lemma~\ref{interpolation} applies to $z(t,\cdot)$ for almost every $t>0$.

\smallskip
We multiply the differential inequality \eqref{diff-ineq} by $(1+t)^{\frac{p+1}{q_c+1}}$ and discard the nonnegative term involving $w_x$. We then obtain
\begin{align} \notag
&\frac1p\frac{d}{dt}\bigl((1+t)^{\frac{p+1}{q_c+1}}\|z(t, \cdot)\|_{L^p(\R)}^p\bigr)+c_{\alpha,p}(1+t)^{\frac{p+1}{q_c+1}}\bigl\|\Lambda^{\alpha/2}|z(t, \cdot)|^{p/2}\bigr\|_{L^2(\R)}^2\\ \label{diff-eqn-oct-28}
&\le(1+t)^{\frac{p+1}{q_c+1}}\Bigl|\int_\R F|z|^{p-2}z\,dx\Bigr|
+\frac{p+1}{p(q_c+1)}(1+t)^{\frac{p-q_c}{q_c+1}}\|z(t, \cdot)\|_{L^p(\R)}^p.
\end{align}
We apply \eqref{eq:weighted-interpolation} and \eqref{eq:forcing-interpolation} to the two terms on the right hand side of \eqref{diff-eqn-oct-28}, using
$\|F(t, \cdot)\|_{\weakL{q_c}}\le C(1+t)^{-1}$ for the first and
$\lambda=\frac{p+1}{p(q_c+1)(1+t)}$ for the second. The sum of the two terms in the right hand side of \eqref{diff-eqn-oct-28} is at most
\[
2\varepsilon(1+t)^{\frac{p+1}{q_c+1}}\bigl\|\Lambda^{\alpha/2}|z(t, \cdot)|^{p/2}\bigr\|_{L^2(\R)}^2+C_{p,\varepsilon}\bigl(\|z(t, \cdot)\|_{\weakL{q_c}}^{\frac{pq_c-1}{q_c+1}}+\|z(t, \cdot)\|_{\weakL{q_c}}^p\bigr).
\]
We choose $\varepsilon=c_{\alpha,p}/4$, subtract the two dissipation terms, and discard the remaining nonnegative term. Since $\frac{pq_c-1}{q_c+1}\in(0,p)$, we apply \eqref{eq:weak-critical} to \eqref{diff-eqn-oct-28} and infer that
\[
\frac{d}{dt}\bigl((1+t)^{\frac{p+1}{q_c+1}}\|z(t, \cdot)\|_{L^p(\R)}^p\bigr)
\le C_p\bigl[1+\|z(0, \cdot)\|_{\weakL{q_c}}+\log(2+t)\bigr]^p,
\]
where $C_p$ depends only on $\alpha$, $p$, and the end states $u_{\pm}$. Integrating from $s>0$ to $t$ and then letting $s\downarrow0$ in \eqref{eq:perturbation-initial-norm}, we obtain
\[
\begin{aligned}
\|z(t, \cdot)\|_{L^p(\R)}^p
&\le(1+t)^{-\frac{p+1}{q_c+1}}\|z(0, \cdot)\|_{L^p(\R)}^p\\
&\quad+C_p t(1+t)^{-\frac{p+1}{q_c+1}}
\bigl[1+\|z(0, \cdot)\|_{\weakL{q_c}}+\log(2+t)\bigr]^p.
\end{aligned}
\]
We take the $p$-th root and use the identity $\frac{p-q_c}{p(q_c+1)}
=\frac{\alpha-1-1/p}{\alpha}$ to conclude that
\[
\|z(t, \cdot)\|_{L^p(\R)}
\le C_p(1+t)^{-\frac{\alpha-1-1/p}{\alpha}}
\bigl[1+\|z(0, \cdot)\|_{L^p(\R)}
+\|z(0, \cdot)\|_{\weakL{q_c}}+\log(2+t)\bigr].
\]
This proves the stated dependence on the norms of the initial perturbation for finite $p$.

\medskip
\noindent{\bf Case 2}: We consider $p=\infty$. We use a one-sided slope estimate of Oleinik type \cite{Oleinik1957}, which we verify for \eqref{eq:fractional-burgers}. Differentiating the equation, we obtain
\[
(u_x)_t+\Lambda^\alpha u_x+u(u_x)_x+(u_x)^2=0.
\]
We fix $0<s<T$ and choose $0<\varepsilon<(1+\|u_x(s,\cdot)\|_{L^\infty(\R)})^{-1}$. For $t\in[s,T]$, we compute
\[
(\partial_t+\Lambda^\alpha+u\partial_x)
\bigl(u_x-\frac1{t-s+\varepsilon}\bigr)
=-\bigl(u_x+\frac1{t-s+\varepsilon}\bigr)
\bigl(u_x-\frac1{t-s+\varepsilon}\bigr).
\]
We apply Lemma~\ref{lem:linear-comparison} on $[s,T]$ with
\[
h=u_x-\frac1{t-s+\varepsilon},\qquad b=u,\qquad
c=u_x+\frac1{t-s+\varepsilon}.
\]
By Theorem~\ref{thm:existence}, the coefficients are bounded on $[s,T]\times\R$, and $h(s,x)=u_x(s,x)-\varepsilon^{-1}<0$. Hence
\[
u_x(t,x)\le\frac1{t-s+\varepsilon},\qquad t\in[s,T].
\]
Letting $\varepsilon\downarrow0$ and then $s\downarrow0$, and using $w_x\ge0$, we obtain
\begin{equation}\label{eq:one-sided-perturbation}
z_x(t,x)=u_x(t,x)-w_x(t,x)\le\frac1t,\qquad t>0.
\end{equation}

\smallskip
We fix $t>0$ and $x$ with $z(t,x)\ne0$. Integrating \eqref{eq:one-sided-perturbation}, we find
\[
z(t,y) \ge z(t,x)-(x-y)/t>z(t,x)/2
\ \text{if }z(t,x)>0, \ x-tz(t,x)/2<y<x,
\]
and
\[ 
z(t,y) \le z(t,x)+(y-x)/t<z(t,x)/2
\ \text{if }z(t,x)<0,\ x<y<x+t|z(t,x)|/2.
\]
In either case,
\[
\begin{aligned}
\|z(t,\cdot)\|_{\weakL{q_c}}^{q_c}
&\ge\Bigl(\frac{|z(t,x)|}{2}\Bigr)^{q_c}
\bigl|\{y:|z(t,y)|>|z(t,x)|/2\}\bigr|\\
&\ge\frac{t}{2^{q_c+1}}|z(t,x)|^{q_c+1}.
\end{aligned}
\]
Taking the supremum over $x$ and applying \eqref{eq:weak-critical}, we obtain
\begin{equation}\label{eq:positive-time-sup}
\begin{aligned}
\|z(t,\cdot)\|_{L^\infty(\R)}
&\le2t^{-1/(q_c+1)}\|z(t,\cdot)\|_{\weakL{q_c}}^{q_c/(q_c+1)}\\
&\le Ct^{-\frac{\alpha-1}{\alpha}}
\bigl[\|z(0,\cdot)\|_{\weakL{q_c}}+\log(1+t)\bigr]^{1/\alpha},
\qquad t>0.
\end{aligned}
\end{equation}
Here we used $1/(q_c+1)=(\alpha-1)/\alpha$ and $q_c/(q_c+1)=1/\alpha$. In particular, the constant is independent of $\|z(0,\cdot)\|_{L^\infty(\R)}$.

\smallskip
For $t\in[0,1]$, we use \eqref{eq:fractional-interpolation} with $p=\infty$ and \eqref{eq:shifted-viscous-bounds} to obtain $\|F(t,\cdot)\|_{L^\infty(\R)}\le C$. We write \eqref{F-definition} as
\[
z_t+\Lambda^\alpha z+uz_x+w_xz=F.
\]
For $0<s<t\le1$, we set
\[
h(t,x)=\pm z(t,x)-\|z(s,\cdot)\|_{L^\infty(\R)}-C(t-s).
\]
Since $w_x\ge0$ and $|F|\le C$, we have
\[
h_t+\Lambda^\alpha h+uh_x+w_xh
=\pm F-C-w_x\bigl(\|z(s,\cdot)\|_{L^\infty(\R)}+C(t-s)\bigr)\le0,
\qquad h(s,x)\le0.
\]
Applying Lemma~\ref{lem:linear-comparison} on $[s,1]$, we obtain
\[
\|z(t,\cdot)\|_{L^\infty(\R)}
\le\|z(s,\cdot)\|_{L^\infty(\R)}+C(t-s).
\]
Letting $s\downarrow0$ and using \eqref{eq:perturbation-initial-norm}, we conclude
\begin{equation}\label{eq:sup-small-time}
\|z(t,\cdot)\|_{L^\infty(\R)}
\le\|z(0,\cdot)\|_{L^\infty(\R)}+Ct,\qquad t\in[0,1].
\end{equation}
Combining \eqref{eq:positive-time-sup} for $t\ge1$ with \eqref{eq:sup-small-time}, we conclude
\begin{align} \notag
\|z(t,\cdot)\|_{L^\infty(\R)}
&\le C\Bigl(1+\|z(0,\cdot)\|_{L^\infty(\R)}
+\|z(0,\cdot)\|_{\weakL{q_c}}\Bigr)\\  \label{sup-z-est-oct-18}
&\quad \qquad\times(1+t)^{-\frac{\alpha-1}{\alpha}}[1+\log(2+t)],\qquad t\ge0.
\end{align}
The proof is complete.
\end{proof}

\smallskip
Now, we provide the proof of our main theorem.
\begin{proof}[Proof of Theorem~\ref{thm:optimality}] As $w(t, \cdot)=V(t+1, \cdot)$, it follows from Corollary~\ref{cor:shifted-viscous-profile} that $w(0, \cdot) - w^R(0, \cdot) \in L^1\cap L^\infty$. Hence, we infer from the hypotheses that $u_0-w(0, \cdot)\in L^p\cap L^{q_c,\infty}$.
\smallskip
Lemma~\ref{lem:supercritical-decay} now applies to $z=u-w$. Since $w(0, \cdot)-w^R(0, \cdot)$ depends only on the end states $u_{\pm}$, we have
\[
\begin{aligned}
&1+\|z(0, \cdot)\|_{L^p(\R)}+\|z(0, \cdot)\|_{\weakL{q_c}}\\
&\quad \qquad\le C\Bigl(
1+\|u_0-w^R(0, \cdot)\|_{L^p(\R)}
+\|u_0-w^R(0, \cdot)\|_{\weakL{q_c}}\Bigr)
\end{aligned}
\]
with $C=C(\alpha,p,u_-,u_+)>0$. Applying Lemma~\ref{lem:supercritical-decay}, we therefore obtain, for $t\ge0$,
\[
\begin{aligned}
\|z(t,\cdot)\|_{L^p(\R)}
&\le C\Bigl(
1+\|u_0-w^R(0,\cdot)\|_{L^p(\R)}
+\|u_0-w^R(0,\cdot)\|_{\weakL{q_c}}\Bigr)\\
&\quad \qquad\times(1+t)^{-\frac{\alpha-1-1/p}{\alpha}}[1+\log(2+t)].
\end{aligned}
\]
Then, we apply the triangle inequality to obtain
\[
\|u(t, \cdot)-w^R(t, \cdot)\|_{L^p(\R)}
\le \|z(t, \cdot)\|_{L^p(\R)}+\|w(t, \cdot)-w^R(t, \cdot)\|_{L^p(\R)}.
\]
By \eqref{eq:shifted-profile-approximation}, the second term decays faster than the bound for the first because
\[
\frac12\Big(1-\frac1p\Big)
-\frac{\alpha-1-1/p}{\alpha}
=\frac{(2-\alpha)(1+1/p)}{2\alpha}>0.
\]

\smallskip
To remove the dependence on $\|z(0, \cdot)\|_{L^p(\R)}$ for $t\ge1$, we use \eqref{eq:weak-critical} and \eqref{eq:positive-time-sup}. For $p \in (q_c,\infty)$, we compute
\[
\begin{aligned}
\|z(t,\cdot)\|_{L^p(\R)}^p
&=p\int_0^\infty\lambda^{p-1}|\{x\in\R:|z(t,x)|>\lambda\}|\,d\lambda\\
&\le p\|z(t,\cdot)\|_{\weakL{q_c}}^{q_c}
\int_0^{\|z(t,\cdot)\|_{L^\infty(\R)}}\lambda^{p-q_c-1}\,d\lambda\\
&=\frac{p}{p-q_c}\|z(t,\cdot)\|_{\weakL{q_c}}^{q_c}
\|z(t,\cdot)\|_{L^\infty(\R)}^{p-q_c}.
\end{aligned}
\]
Taking the $p$-th root, we obtain
\[
\|z(t, \cdot)\|_{L^p(\R)}
\le\Big(\frac{p}{p-q_c}\Big)^{1/p}
\|z(t, \cdot)\|_{\weakL{q_c}}^{q_c/p}
\|z(t, \cdot)\|_{L^\infty(\R)}^{1-q_c/p}.
\]
Since
\[
\frac{\alpha-1}{\alpha}\Big(1-\frac{q_c}{p}\Big)
=\frac{\alpha-1-1/p}{\alpha},
\]
we obtain, also for $p=\infty$ by the supremum estimate \eqref{sup-z-est-oct-18},
\[
\|z(t, \cdot)\|_{L^p(\R)}
\le C t^{-\frac{\alpha-1-1/p}{\alpha}}
\bigl[1+\|z(0, \cdot)\|_{\weakL{q_c}}+\log(2+t)\bigr],\qquad t\ge1.
\]
We again compare $z$ with $u-w^R$ and absorb the fixed profile contribution into the constant. This proves the final assertion about the dependence on the initial data.
\end{proof}

\smallskip
\begin{proof}[Proof of Remark~\ref{rem:below-critical}] The first assertion of the remark follows from \eqref{eq:perturbation-one}.  In addition, the estimate of $\|z(t,\cdot)\|_{L^{q_c}(\R)}$ follows from \eqref{eq:strong-critical} of Lemma~\ref{lem:strong-critical}. For $p \in (1,q_c)$,  by applying \eqref{eq:weak-critical}, and \eqref{eq:below-interpolation}, we obtain
\[
\|z(t, \cdot)\|_{L^p(\R)}
\le C_p(1+t)^{\frac1p-(\alpha-1)}
\bigl[1+\log(2+t)\bigr]^{\frac{1-1/p}{2-\alpha}}.
\]
This proves assertion estimates in the remark.
\end{proof}

\smallskip
We now prove that the algebraic exponent cannot be improved.

\begin{proof}[Proof of Remark \ref{optimal-remark}]
Applying \eqref{eq:burgers-range-bound} to the step data, we obtain $-1\le u\le0$. We verify strict negativity directly. By reflection, a Galilean change of variables, and uniqueness, we have
\[
u(t,x)=-1-u(t,-t-x),
\]
so $u(t,-t/2)=-1/2$ for every $t>0$. If $u(t_*,x_*)=0$ at a positive time, then $u_t(t_*,x_*)=u_x(t_*,x_*)=0$, since $u\le0$ in space and time. However, from \eqref{fractional-def},
\[
\Lambda^\alpha u(t_*,x_*)
=c_\alpha\int_\R\frac{-u(t_*,y)}{|x_*-y|^{1+\alpha}}\,dy>0,
\]
which contradicts the equation. Hence $u(t,x)<0$ for $t>0$.

\smallskip
For $\lambda\ge1$, the function $u_\lambda(s,x)=\lambda^{\alpha-1}u(\lambda^\alpha s,\lambda x)$ solves the same equation and satisfies
\[
u_\lambda(0,x)=-\lambda^{\alpha-1}\mathbf{1}_{(-\infty,0)}(x)
\le-\mathbf{1}_{(-\infty,0)}(x)=u(0,x).
\]
Applying Corollary~\ref{cor:burgers-comparison} to these ordered initial data, we obtain $u_\lambda(s,x)\le u(s,x)$. Taking $s=1$ and $\lambda=t^{1/\alpha}$, we infer
\[
u(t,t^{1/\alpha}y)
\le t^{-\frac{\alpha-1}{\alpha}}u(1,y),
\qquad t\ge1.
\]
Since $u(1,\cdot)$ is continuous and strictly negative, we may set
\[
c=-\max_{y \in [0,1]}u(1,y)>0.
\]
As $w^R(t,x)=0$ for $x\ge0$, it follows that
\begin{equation}\label{eq:optimality-pointwise-lower}
|u(t,x)-w^R(t,x)|
\ge c\,t^{-\frac{\alpha-1}{\alpha}},
\qquad x \in [0,t^{1/\alpha}].
\end{equation}
For finite $p$, we obtain
\[
\|u(t,\cdot)-w^R(t,\cdot)\|_{L^p(\R)}^p
\ge\int_0^{t^{1/\alpha}}c^p t^{-p(\alpha-1)/\alpha}\,dx=c^p t^{-\frac{p(\alpha-1)-1}{\alpha}}.
\]
Taking the $p$-th root proves \eqref{eq:optimality-lower}. For $p=\infty$, we take the essential supremum in \eqref{eq:optimality-pointwise-lower} over $(0,t^{1/\alpha})$ to obtain
\[
\|u(t,\cdot)-w^R(t,\cdot)\|_{L^\infty(\R)}
\ge\operatorname*{ess\,sup}_{x\in(0,t^{1/\alpha})}|u(t,x)-w^R(t,x)|
\ge c\,t^{-\frac{\alpha-1}{\alpha}}.
\] 
The proof is complete.
\end{proof}

\smallskip
We note that at $p=q_c$ the lower bound estimate in Remark \ref{optimal-remark} is constant, and for $p \in [1,q_c)$ it grows with time.  We combine Remark \ref{optimal-remark} and conclude the section with the following remark on the optimality of the decay estimates in Lemma \ref{lem:supercritical-decay}. 
\begin{remark} The algebraic algebraic exponent in the decay estimates in Lemma \ref{lem:supercritical-decay} is optimal.
\end{remark}
\begin{proof} By \eqref{eq:shifted-profile-approximation}, for $p\in(1,\infty]$,
\[
\|w(t, \cdot)-w^R(t, \cdot)\|_{L^p(\R)}
\le C_p(1+t)^{-\frac12(1-1/p)}.
\]
Since
\[
\frac12\Big(1-\frac1p\Big)
-\frac{\alpha-1-1/p}{\alpha}
=\frac{(2-\alpha)(1+1/p)}{2\alpha}>0,
\]
we infer from the reverse triangle inequality \eqref{eq:optimality-lower} that, for each $p \in (q_c,\infty]$ and all sufficiently large $t$,
\[
\|u(t, \cdot)-w(t, \cdot)\|_{L^p(\R)}
\ge\frac c2\,t^{-\frac{\alpha-1-1/p}{\alpha}}
\]
with some constant $c>0$ independent on $t$. Thus the algebraic exponent in the estimates in Lemma \ref{lem:supercritical-decay} is optimal.
\end{proof}

\section{Applications}\label{sec:applications}

In this section, we give two simple direct applications of the decay estimates in Theorem~\ref{thm:optimality}. The first concerns integrable perturbations in one space dimension. The second concerns planar rarefaction waves in two space dimensions. A comprehensive application to a parabolic-elliptic system will be developed in \cite{PS}.

\smallskip
For integrable perturbations of the Riemann step, we can separate the contribution of the initial perturbation from the error generated by fractional diffusion. In this flavor, the following theorem strengthens \cite[Theorem 1.1]{KMX2008}.
\begin{theorem}\label{thm:application-integrable}
Let $\alpha\in(1,2)$ and $u_-<u_+$. Suppose that $u_0\in L^\infty(\R)$ and $u_0-w^R(0,\cdot)\in L^1(\R)$, 
where $w^R$ is given by \eqref{eq:rarefaction}. Then the solution $u$ of \eqref{eq:fractional-burgers} satisfies, for every $p\in(q_c,\infty]$ and $t\ge1$,
\begin{equation}\label{eq:application-integrable}
\begin{aligned}
\|u(t,\cdot)-w^R(t,\cdot)\|_{L^p(\R)}
&\le C t^{-\frac1\alpha(1-1/p)}
\|u_0-w^R(0,\cdot)\|_{L^1(\R)}\\
&\quad+ \bar{C} t^{-\frac{\alpha-1-1/p}{\alpha}}
[1+\log(2+t)],
\end{aligned}
\end{equation}
where $C = C(\alpha,p)>0$ and $\bar{C} = C (\alpha,p,u_-,u_+)>0$  are independent of $u_0$ and $t$.
\end{theorem}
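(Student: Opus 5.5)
The plan is to split $u - w^R$ through an intermediate solution. Let $\widetilde u$ be the solution of \eqref{eq:fractional-burgers} with initial datum $w^R(0,\cdot)$, the exact Riemann step. Then
\[
\|u(t,\cdot)-w^R(t,\cdot)\|_{L^p}\le\|u(t,\cdot)-\widetilde u(t,\cdot)\|_{L^p}+\|\widetilde u(t,\cdot)-w^R(t,\cdot)\|_{L^p}.
\]
The second term is handled directly by Theorem~\ref{thm:optimality}. The initial perturbation is zero, so the hypotheses hold trivially. The final assertion of that theorem for $t\ge1$ then gives the bound $\bar C t^{-\frac{\alpha-1-1/p}{\alpha}}[1+\log(2+t)]$ with $\bar C=C(\alpha,p,u_-,u_+)$, which is independent of $u_0$.

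For the first term I would prove an $L^1\to L^p$ contraction-type estimate for the difference of two solutions, in the spirit of \eqref{eq:earlier-solution-comparison}. Here one solution, $\widetilde u$, is nondecreasing in $x$ by Corollary~\ref{cor:burgers-comparison}(ii). Set $\zeta=u-\widetilde u$. It satisfies
\[
\zeta_t+\Lambda^\alpha\zeta+\bigl(\widetilde u\,\zeta+\tfrac12\zeta^2\bigr)_x=0,
\]
which is \eqref{F-definition} with $w$ replaced by $\widetilde u$ and with no forcing. The $L^1$ argument of Lemma~\ref{lem:perturbation-one}, now with $F=0$, gives $\|\zeta(t)\|_{L^1}\le\|\zeta(0)\|_{L^1}$. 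The $L^p$ energy identity \eqref{diff-ineq} gives
\[
\tfrac1p\tfrac{d}{dt}\|\zeta\|_p^p+c_{\alpha,p}\|\Lambda^{\alpha/2}|\zeta|^{p/2}\|_2^2\le0,
\]
because the term $\frac{p-1}{p}\int\widetilde u_x|\zeta|^p$ is nonnegative by monotonicity of $\widetilde u$. Next I would use the Nash-type interpolation
\[
\|\zeta\|_p^{p(1+\alpha(p-1)/(p-1))}\lesssim\|\Lambda^{\alpha/2}|\zeta|^{p/2}\|_2^2\,\|\zeta\|_1^{\,\cdot}.
\]
This comes from Gagliardo--Nirenberg for $g=|\zeta|^{p/2}$ between $\dot H^{\alpha/2}$ and $L^{2/p}$, or alternatively by mimicking the Morrey argument in Lemma~\ref{interpolation} with $L^1$ in place of $L^{q_c,\infty}$. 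Combined with $L^1$ conservation, the standard ODE comparison yields
\[
\|\zeta(t)\|_{L^p}\le C(\alpha,p)\,t^{-\frac1\alpha(1-1/p)}\|\zeta(0)\|_{L^1}
\]
for finite $p$, with a constant depending only on $\alpha,p$ by scaling. For $p=\infty$ I would use a duality or iteration (Moser/Alikakos) argument. An alternative is the one-sided bound: $\zeta_x\le u_x\le 1/t$ fails to hold directly since $\widetilde u_x\ge0$ enters with the wrong sign. So instead I would iterate the $L^p$ bound on $[t/2,t]$ with $L^{p}\to L^{2p}$ steps, tracking constants as in the classical proof for monotone reference waves; this is the argument of \cite{AIK2010}, adapted to $\alpha\in(1,2)$. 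The rigorous justification proceeds by approximating the data via Lemma~\ref{lem:approximation} and passing to the limit, as in Lemma~\ref{truncation-lemma}.

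The main obstacle is the $p=\infty$ endpoint with a constant independent of $\|\zeta(0)\|_{L^\infty}$. The fallback is to write $\zeta(t)$ via the Duhamel formula from time $t/2$:
\[
\zeta(t)=e^{-\frac t2\Lambda^\alpha}\zeta(t/2)-\int_{t/2}^t\partial_xe^{-(t-s)\Lambda^\alpha}\bigl[(\widetilde u+\tfrac12\zeta)\zeta\bigr]\,ds.
\]
The first term is bounded by $t^{-1/\alpha}\|\zeta(t/2)\|_1$. For the integral I would use the $L^{p}\to L^\infty$ kernel bound for some large finite $p$ together with the already established $L^p$ decay and the uniform boundedness of $u,\widetilde u$. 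The exponents close because the integrand singularity $(t-s)^{-1/\alpha-1/(\alpha p)}$ is integrable for large $p$ when $\alpha>1$. This route, however, produces a term $t^{1-1/\alpha-1/(\alpha p)}\cdot t^{-\frac1\alpha(1-1/p)}$, and that term does not decay fast enough. So the real work is to use the structure $\widetilde u_x\ge0$, via a comparison-principle argument in the form of Lemma~\ref{lem:linear-comparison}, rather than crude Duhamel. Finally, adding the two terms gives \eqref{eq:application-integrable}.
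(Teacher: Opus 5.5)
Your decomposition through the solution $\widetilde u$ with the exact step datum, and your treatment of $\widetilde u-w^R$ by the $t\ge1$ part of Theorem~\ref{thm:optimality}, coincide with the paper's proof. The difference is the term $u-\widetilde u$. The paper proves nothing there; it quotes \cite[Lemma~3.1]{KMX2008}, which gives $\|u(t,\cdot)-\widetilde u(t,\cdot)\|_{L^p(\R)}\le C_{\alpha,p}t^{-\frac1\alpha(1-1/p)}\|u_0-w^R(0,\cdot)\|_{L^1(\R)}$ for all $p\in[1,\infty]$ when $\alpha\in(1,2)$ and the reference datum is nondecreasing.

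Your self-contained substitute is fine for finite $p$ (the Nash exponent should read $p(1+\frac{\alpha}{p-1})$). For $p=\infty$, which the theorem includes, you give no argument. Moser iteration is only named, the Duhamel route fails as you note, and the comparison-principle idea is announced but not carried out. So as written the endpoint $p=\infty$ is not proved.

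You also discard the one-sided bound for the wrong reason. The inequality $\zeta_x=u_x-\widetilde u_x\le u_x\le 1/t$ does hold, precisely because $\widetilde u_x\ge0$ (compare \eqref{eq:one-sided-perturbation}). It is useless here for a different reason: combined with $L^1$ it only yields $\|\zeta(t,\cdot)\|_{L^\infty(\R)}\le 2t^{-1/2}\|\zeta(t,\cdot)\|_{L^1(\R)}^{1/2}$. That has the wrong rate and is not linear in the data.

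The gap closes either by citing \cite[Lemma~3.1]{KMX2008}, as the paper does, or by a short argument along the line you guessed.
\begin{itemize}
\item Write the equation as $\zeta_t+\Lambda^\alpha\zeta+u\zeta_x+\widetilde u_x\zeta=0$. Apply Lemma~\ref{lem:linear-comparison} on $[s,T]$, $s>0$, to $h=\pm\zeta-\|\zeta(s,\cdot)\|_{L^\infty(\R)}$ with $b=u$ and $c=\widetilde u_x\ge0$. This shows $t\mapsto\|\zeta(t,\cdot)\|_{L^\infty(\R)}$ is nonincreasing on $(0,\infty)$.
\item The Morrey argument in the proof of Lemma~\ref{interpolation}, with $L^1$ in place of $\weakL{q_c}$, gives $\|\zeta\|_{L^\infty}^{1+\alpha}\le C\|\Lambda^{\alpha/2}\zeta\|_{L^2}^2\|\zeta\|_{L^1}^{\alpha-1}$.
\item Integrate the $L^2$ energy inequality over $[t/2,t]$. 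Using the monotonicity, the $L^1$ contraction and your $L^2$ decay, you get
\[
\frac t2\,\|\zeta(t,\cdot)\|_{L^\infty}^{1+\alpha}
\le C\|\zeta(0,\cdot)\|_{L^1}^{\alpha-1}\|\zeta(t/2,\cdot)\|_{L^2}^2
\le C\|\zeta(0,\cdot)\|_{L^1}^{\alpha+1}\,t^{-1/\alpha}.
\]
\end{itemize}
That is, $\|\zeta(t,\cdot)\|_{L^\infty}\le C t^{-1/\alpha}\|\zeta(0,\cdot)\|_{L^1}$ with $C=C(\alpha)$. With this step, or the citation, your proof is complete and agrees with the paper's.
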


\begin{proof}
We let $v$ solve \eqref{eq:fractional-burgers} with $v(0,\cdot)=w^R(0,\cdot)$. Since this initial datum is nondecreasing, we apply \cite[Lemma 3.1]{KMX2008}, to obtain
\[
\|u(t,\cdot)-v(t,\cdot)\|_{L^p(\R)}
\le C_{\alpha,p}t^{-\frac1\alpha(1-1/p)}
\|u_0-w^R(0,\cdot)\|_{L^1(\R)}.
\]
Applying Theorem~\ref{thm:optimality} to $v$, whose initial perturbation from the Riemann step vanishes, and then using the triangle inequality, we obtain \eqref{eq:application-integrable}.
\end{proof}

\smallskip
Since
\[
\frac{1-1/p}{\alpha}-\frac{\alpha-1-1/p}{\alpha}
=\frac{2-\alpha}{\alpha}>0,
\]
the contribution of the integrable perturbation in \eqref{eq:application-integrable} decays faster than the error associated with the exact Riemann data. Thus we improve both the algebraic decay rate and the range of convergence in \cite[Theorem 1.1]{KMX2008}.

\smallskip
We next consider the two-dimensional equation
\begin{equation}\label{eq:application-planar-equation}
U_t+(-\Delta)^{\alpha/2}U+b\cdot\nabla(U^2)=0,
\qquad (x,y)\in\R^2,\quad t>0,
\end{equation}
where $b=(b_1,b_2)$ and $b_1>0$. Following \cite{KPX2015}, we use the normalized end states $0$ and $1$ and define
\begin{equation}\label{eq:application-planar-wave}
W^R(t,x,y)=
\begin{cases}
0,&x\le0,\\
x/(2b_1t),&0<x<2b_1t,\\
1,&x\ge2b_1t,
\end{cases}
\quad t>0,\qquad
W^R(0,x,y)=\mathbf{1}_{(0,\infty)}(x).
\end{equation}
In \cite[Theorem~2.1, p.~2831]{KPX2015}, the supremum-norm error is bounded by $Ct^{-(\alpha-1)/2}[\log(2+t)]^{(3-\alpha)/2}$. We improve its algebraic exponent under the same initial-data assumptions for $\alpha\in(1,2)$.

\begin{theorem}\label{thm:application-planar}
Let $\alpha\in(1,2)$ and $b=(b_1,b_2)\in\R^2$ with $b_1>0$. Suppose that $U_0\in L^\infty(\R^2)$ and
\[
U_0-W^R(0,\cdot)\in L^1(\R^2).
\]
Then the unique bounded solution of \eqref{eq:application-planar-equation} with initial datum $U_0$, smooth for positive time, satisfies, for $t\ge1$,
\begin{equation}\label{eq:application-planar-decay}
\begin{aligned}
\|U(t,\cdot)-W^R(t,\cdot)\|_{L^\infty(\R^2)}
&\le C t^{-2/\alpha}
\|U_0-W^R(0,\cdot)\|_{L^1(\R^2)}\\
&\quad+ Ct^{-\frac{\alpha-1}{\alpha}}
[1+\log(2+t)],
\end{aligned}
\end{equation}
where constant $C= C(\alpha,b)>$ is independent of $U_0$ and $t$.
\end{theorem}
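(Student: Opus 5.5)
The idea is to reduce the two-dimensional problem to the one-dimensional Theorem~\ref{thm:application-integrable} via the planar structure, and then add an $L^1\to L^\infty$ contraction estimate for the two-dimensional fractional Burgers equation.

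\emph{Step 1: the planar comparison solution.} Let $v(t,x)$ be the one-dimensional solution of $v_t+\Lambda^\alpha v+2b_1vv_x=0$ with $v(0,\cdot)=\mathbf 1_{(0,\infty)}$. The rescaling $\tilde v=2b_1v$ turns this into \eqref{eq:fractional-burgers} with end states $u_-=0$ and $u_+=2b_1$. The rarefaction wave of this rescaled problem is exactly $2b_1W^R$. Set $\bar U(t,x,y)=v(t,x)$. Since $(-\Delta)^{\alpha/2}$ acting on functions independent of $y$ coincides with $\Lambda^\alpha$ in $x$, and $b\cdot\nabla(\bar U^2)=2b_1\bar U\bar U_x$, the function $\bar U$ is a bounded solution of \eqref{eq:application-planar-equation} with initial datum $W^R(0,\cdot)$. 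By uniqueness it is the solution with that datum. Theorem~\ref{thm:optimality} with $p=\infty$ and zero initial perturbation gives
\[
\|\bar U(t)-W^R(t)\|_{L^\infty(\R^2)}=\|v(t)-W^R(t)\|_{L^\infty(\R)}\le C(1+t)^{-\frac{\alpha-1}{\alpha}}[1+\log(2+t)],
\]
with $C=C(\alpha,b_1)$.

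\emph{Step 2: the $L^1$ perturbation.} Write $U-\bar U=\zeta$, which satisfies
\[
\zeta_t+(-\Delta)^{\alpha/2}\zeta+b\cdot\nabla\bigl((U+\bar U)\zeta\bigr)=0 .
\]
Since $\bar U$ is monotone nondecreasing in $x$ (Corollary~\ref{cor:burgers-comparison} applied in one dimension), we want the two-dimensional analogue of \cite[Lemma 3.1]{KMX2008}, namely
\[
\|\zeta(t)\|_{L^\infty(\R^2)}\le Ct^{-2/\alpha}\|\zeta(0)\|_{L^1(\R^2)} .
\]
My plan is to cite the corresponding estimate in \cite{KPX2015}, which treats exactly this setting with a monotone planar reference solution. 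If a direct proof is needed, the standard route has three parts:
\begin{itemize}
\item[(a)] $L^1$ contraction, obtained by testing with $\operatorname{sgn}\zeta$ and using Kato's inequality; the convective term is a total divergence.
\item[(b)] An $L^p$ energy estimate, obtained by testing with $|\zeta|^{p-2}\zeta$. Here the convective term yields $\frac{p-1}{p}\int b_1\partial_x(U+\bar U)|\zeta|^p$. The $\bar U_x\ge0$ part has a good sign, but the $U_x$ part does not.
\item[(c)] A Nash-type iteration from $L^1$ to $L^\infty$ in dimension $2$, which produces the rate $t^{-2/\alpha}$.
\end{itemize}

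\emph{Main obstacle.} The problematic term in (b) is the one involving $U_x$. I would handle it by rewriting $(U+\bar U)\zeta=2\bar U\zeta+\zeta^2$. The $\zeta^2$ flux then integrates to zero against $|\zeta|^{p-2}\zeta$, and the $\bar U$ term has a good sign because $\bar U_x\ge0$. This is precisely the cancellation used in Lemma~\ref{lem:supercritical-decay}. The dissipative $L^p$ inequality then reduces to the linear fractional-diffusion one, and the Nash/Moser iteration (or comparison with the fractional heat kernel) gives the $L^1\to L^\infty$ bound.

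\emph{Conclusion.} The triangle inequality $\|U-W^R\|_\infty\le\|\zeta\|_\infty+\|\bar U-W^R\|_\infty$, for $t\ge1$, combines the two bounds and yields \eqref{eq:application-planar-decay}.
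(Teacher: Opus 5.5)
Your proposal is correct and follows essentially the same route as the paper. Both build the planar reference solution from the one-dimensional profile $v$ with $v_x\ge0$, cite the $L^1\to L^\infty$ stability estimate of \cite{KPX2015} for $U-\bar U$, apply Theorem~\ref{thm:optimality} at $p=\infty$ with zero initial perturbation to $2b_1v$, and finish with the triangle inequality. Your optional direct sketch of the stability step is also sound: the rewriting $(U+\bar U)\zeta=2\bar U\zeta+\zeta^2$ removes every contribution of $U$, including the $b_2U_y$ part that your description of the convective term as $\frac{p-1}{p}\int b_1\partial_x(U+\bar U)|\zeta|^p$ leaves out.
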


\begin{proof}
We obtain existence, uniqueness, and positive-time regularity from \cite[Theorem~3.1, p.~2833]{KPX2015} with regularization parameter $\varepsilon=0$, which is allowed for $\alpha\in(1,2]$. We let $v$ be the bounded solution of
\begin{equation}\label{eq:application-planar-profile}
\begin{cases}
v_t+\Lambda^\alpha v+b_1(v^2)_x=0,&(t,x)\in(0,\infty)\times\R,\\
v(0,x)=\mathbf{1}_{(0,\infty)}(x),&x\in\R.
\end{cases}
\end{equation}
and set $\widetilde U(t,x,y)=v(t,x)$. By \cite[Lemma~5.1 and Corollary~5.2, p.~2837]{KPX2015}, the restriction of $(-\Delta)^{\alpha/2}$ to functions independent of $y$ agrees with $\Lambda^\alpha$. Hence $\widetilde U$ solves \eqref{eq:application-planar-equation}. Moreover, $2b_1v$ solves \eqref{eq:fractional-burgers} with nondecreasing initial data. By Corollary~\ref{cor:burgers-comparison}, we have $v_x\ge0$ for positive time, so
\[
b\cdot\nabla\widetilde U=b_1v_x\ge0.
\]
Moreover,
\[
\|U_0-\widetilde U(0,\cdot)\|_{L^1(\R^2)}
=\|U_0-W^R(0,\cdot)\|_{L^1(\R^2)}<\infty.
\]
Applying \cite[Theorem~4.1, eq.~(4.1), p.~2834]{KPX2015} with $p=\infty$ and $\varepsilon=0$, we obtain
\begin{equation}\label{eq:application-planar-stability}
\|U(t,\cdot)-\widetilde U(t,\cdot)\|_{L^\infty(\R^2)}
\le C t^{-2/\alpha}
\|U_0-W^R(0,\cdot)\|_{L^1(\R^2)},
\end{equation}
where $C = C (\alpha,b)>0$.
\smallskip
From \eqref{eq:application-planar-profile}, we obtain
\[
(2b_1v)_t+\Lambda^\alpha(2b_1v)+(2b_1v)(2b_1v)_x
=2b_1\bigl[v_t+\Lambda^\alpha v+b_1(v^2)_x\bigr]=0.
\]
Thus $2b_1v$ solves \eqref{eq:fractional-burgers} with end states $0$ and $2b_1$. Its inviscid rarefaction wave is $2b_1W^R$, viewed as a function of $(t,x)$. Since $2b_1v(0,\cdot)=2b_1W^R(0,\cdot)$, we apply Theorem~\ref{thm:optimality} with $p=\infty$ and divide by $2b_1$ to obtain
\begin{equation}\label{eq:application-planar-profile-decay}
\|\widetilde U(t,\cdot)-W^R(t,\cdot)\|_{L^\infty(\R^2)}
\le C t^{-\frac{\alpha-1}{\alpha}}[1+\log(2+t)],
\qquad t\ge1,
\end{equation}
where $C = C(\alpha, b_1)>0$. Combining \eqref{eq:application-planar-stability} and \eqref{eq:application-planar-profile-decay} by the triangle inequality, we obtain \eqref{eq:application-planar-decay}.
\end{proof}

\smallskip
In the proof of \cite[Theorem~2.1, pp.~2837--2838]{KPX2015}, the authors use \cite[Theorem~1.1]{KMX2008} to estimate the one-dimensional profile error. We replace that step by \eqref{eq:application-planar-profile-decay}. The improvement in the algebraic exponent is
\[
\frac{\alpha-1}{\alpha}-\frac{\alpha-1}{2}
=\frac{(\alpha-1)(2-\alpha)}{2\alpha}>0.
\]
Although the logarithmic power in \eqref{eq:application-planar-decay} is larger, the resulting bound decays strictly faster as $t\to\infty$. We state the conclusion in the supremum norm because a nonzero planar profile error, being independent of $y$, does not belong to $L^p(\R^2)$ for finite $p$.

\end{document}